\theoremstyle{plain}% default
\theoremstyle{definition}
\newtheorem{theorem}{Theorem}[section]
\newtheorem{lemma}[theorem]{Lemma}
\newtheorem{proposition}[theorem]{Proposition}
\newtheorem{corollary}[theorem]{Corollary}
\newtheorem{definition}[theorem]{Definition}
\newtheorem{remark}[theorem]{Remark}
\newtheorem{notation}[theorem]{Notation}
\newtheorem{thm}{Theorem}
\newtheorem*{dfn*}{Definition}
\newtheorem*{cor*}{Corollary}
\newtheorem*{prp*}{Proposition}
\newtheorem*{rmk*}{Remark}
\newtheorem{qst}{Question}
\newcommand{\N}{\mathbb{N}}
\newcommand{\R}{\mathbb{R}}
\newcommand{\e}{\varepsilon}
\DeclareMathOperator{\supp}{supp}
\DeclareMathOperator{\ran}{ran}
\DeclareMathOperator{\scc}{succ}
\DeclareMathOperator{\dist}{dist}
\newcommand{\inn}{\in\mathbb{N}}
\newcommand{\de}{\delta}
\newcommand{\al}{\alpha}
\newcommand{\be}{\beta}
\newcommand{\es}{\emptyset}
\newcommand{\la}{\lambda}
\newcommand{\X}{X_{(\T,L,\e)}}
\newcommand{\Ft}{\tilde{\Phi}_{(\T, L, \e)}}
\newcommand{\F}{\Phi_{(\T, L, \e)}}
\newcommand{\C}{\mathcal{C}}
\DeclareMathOperator{\im}{Im}
\DeclareMathOperator{\T}{\mathcal{T}}
\DeclareMathOperator{\s}{\mathcal{S}}
\long\def\symbolfootnote[#1]#2{\begingroup%
\def\thefootnote{\fnsymbol{footnote}}\footnote[#1]{#2}\endgroup}
\begin{document}
\title{A hierarchy of Banach spaces with $C(K)$ Calkin Algebras}
\author[P. Motakis]{Pavlos Motakis}
\address{National Technical University of Athens, Faculty of Applied Sciences, Department
of Mathematics, Zografou Campus, 157 80, Athens, Greece}
\email{pmotakis@central.ntua.gr}
\author[D. Puglisi]{Daniele Puglisi}
\address{Department of Mathematics and Computer Sciences, University of Catania,  Catania, 95125, Italy (EU)}
\email{dpuglisi@dmi.unict.it}
\author[D. Zisimopoulou]{Despoina Zisimopoulou}
\address{National Technical University of Athens, Faculty of Applied Sciences, Department
of Mathematics, Zografou Campus, 157 80, Athens, Greece}
\email{dzisimopoulou@hotmail.com}

\keywords{Calkin Algebras, Bourgain-Delbaen method, $\mathcal{L}_\infty$-spaces}
\date{}

\maketitle

\symbolfootnote[0]{\textit{2010 Mathematics Subject
Classification:} Primary 46B03, 46B25, 46B28}

%\symbolfootnote[0]{\textit{Key words:} Calkin Algebras, Bourgain-Delbaen method, $\mathcal{L}_\infty$-spaces}

\symbolfootnote[0]{This research was supported by program API$\Sigma$TEIA 1082.}

\symbolfootnote[0]{The second author was supported by ``National Group for Algebraic and Geometric Structures, and their Applications'' (GNSAGA - INDAM).}

\begin{abstract}
For every well founded tree $\mathcal{T}$ having a unique root such that every non-maximal node of it has countable infinitely many immediate successors, we construct a $\mathcal{L}_\infty$-space $X_{\mathcal{T}}$. We prove that for each such tree $\mathcal{T}$, the Calkin algebra of $X_{\T}$ is homomorphic to $C(\T)$, the algebra of continuous functions defined on $\mathcal{T}$, equipped with the usual topology. We use this fact to conclude that for every countable compact metric space $K$ there exists a $\mathcal{L}_\infty$-space whose Calkin algebra is isomorphic, as a Banach algebra, to $C(K)$.
\end{abstract}

\section*{Introduction}

The Calkin algebra of a Banach space $X$ is the quotient algebra $\mathcal{C}al(X) = \mathcal{L}(X)/\mathcal{K}(X)$ where $\mathcal{L}(X)$ denotes the algebra of all bounded linear operators defined on $X$ and $\mathcal{K}(X)$ denotes the ideal of all the compact ones. It is named after J. W. Calkin, who proved in \cite{C} that the only non-trivial closed ideal of the bounded linear operators on $\ell_2$ is the one of the compact operators. It is an important example of a unital algebra, for example an operator $U \in \mathcal{L}(X)$  is Fredholm if and only if the class $[U]$ of $U$ in the Calkin algebra is invertible. A question that arises is the following: let $A$ be  a Banach algebra, does there exist a Banach space $X$ so that the Calkin algebra of $X$ is isomorphic, as a Banach algebra, to $A$?

The very first Banach space for which the Calkin algebra was explicitly described is the Argyros-Haydon space $X_{AH}$ \cite{AH} whose main feature is that it has the ``scalar plus compact'' property, i.e. the ideal of the compact operators is of co-dimension one in the algebra of all bounded operators and hence its Calkin algebra is one-dimensional. The aforementioned construction is based on a combination of the methods used by W. T. Gowers and B. Maurey in \cite{GM1} and by J. Bourgain and F. Delbaen in \cite{BD}, hence the resulting space $X_{AH}$ is hereditarily indecomposable (HI) as well as a $\mathcal{L}_\infty$-space. It is worth mentioning that for every natural number $k$, by carefully taking $X_1,\dots, X_k$ versions of the Argyros-Haydon space, the Calkin algebra of the direct sum of these spaces is $k$-dimensional. In 2013 M. Tarbard \cite{T}, combining the techniques from \cite{AH} and \cite{GM}, provided an example of a Banach space $\mathfrak{X}_\infty$ such that its Calkin algebra is isometric, as a Banach  algebra, to the convolution algebra $\ell_1(\mathbb{N}_0)$.

Since the space $\ell_1$ occurs as a Calkin algebra, one may ask whether the same is true for $c_0$. To make the question more precise, does there exist a Banach space such that its Calkin algebra is isomorphic, as a Banach algebra, to $C(\omega)$? Or more generally, one may ask for what topological spaces $K$, the algebra $C(K)$ is isomorphic to the Calkin algebra of some Banach space.

The above question is the one under consideration in the present paper, in particular we prove the following:
\begin{thm}\label{a}
Let $\T$ be a well founded tree with a unique root, such that every non-maximal node of $\T$ has countable infinitely many immediate successors. Then there exists a $\mathcal{L}_\infty$-space $X_{\T}$ with the following properties.
\begin{itemize}

\item[(i)] The dual of $X_{\T}$ is isomorphic to $\ell_1$.

\item[(ii)] There exists a family of norm-one projections $(I_s)_{s\in\T}$ such that every operator defined on the space is approximated by a sequence of operators, each one of which is a linear combination of these projections plus a compact operator.

\item[(iii)] There exists a bounded, one-to-one and onto algebra homomorphism $\Phi: \C al(X_{\T}) \rightarrow C(\T)$, where $C(\T)$ denotes the algebra of all continuous functions defined on the compact topological space $\T$. In other words, the Calkin of $X_{\T}$ is isomorphic, as a Banach algebra, to $C(\T)$.

%\item[(iv)] The Calkin algebra of $X_{\T}$ is separable and commutative.

%\item[(v)] The Calkin algebra of $X_{\T}$ is $c_0$-saturated and has the Dunford-Pettis property.

%\item[(vi)] The dual of the Calkin algebra of $X_{\T}$ is as well separable and has the Schur property.

\end{itemize}
\end{thm}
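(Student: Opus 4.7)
I would construct the space $X_{\T}$ by transfinite recursion on the rank of the well founded tree $\T$, using a Bourgain--Delbaen scheme of Argyros--Haydon--Tarbard flavour. In the base case where $\T$ consists only of a root, $X_{\T}$ can be taken to be a copy of the Argyros--Haydon space, whose Calkin algebra is one-dimensional, matching $C(\{\text{pt}\})\cong \mathbb{C}$. Assuming $X_{\T_n}$ has been built for each maximal proper subtree $\T_n$ hanging off the root, I would form $X_{\T}$ by a single BD construction that simultaneously embeds every $X_{\T_n}$ as a complemented norm-one subspace, and whose coding conditionals (the evaluating functionals in BD notation) add new analytic constraints tying the branches together and forcing the desired operator-ideal behaviour in the limit. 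Standard BD-theory then shows $X_{\T}$ is $\mathcal{L}_\infty$ and has $\ell_1$-dual, yielding (i).

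\textbf{Projections and definition of $\Phi$.} For each node $s\in\T$, I would take $I_s$ to be the projection onto the closed linear span of the BD-basis vectors indexed by the cone $U_s=\{t\in\T:t\geq s\}$. The construction would be arranged so that each $I_s$ has norm one, $I_{\text{root}}=\mathrm{id}$, and the family satisfies $I_sI_t=I_{s\vee t}$ when $s,t$ are comparable and $I_sI_t$ is compact when they are incomparable. This exactly mirrors the pointwise product of the characteristic functions $\mathbf{1}_{U_s}\in C(\T)$, so I would define $\Phi([I_s])=\mathbf{1}_{U_s}$, extend linearly to the subalgebra generated by the $[I_s]$, and then by continuity to the whole Calkin algebra. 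Since the clopen cones $U_s$ generate the topology of $\T$ and separate points, Stone--Weierstrass shows that the linear span of the $\mathbf{1}_{U_s}$ is dense in $C(\T)$, giving surjectivity (onto a dense subalgebra, hence onto $C(\T)$). Injectivity of $\Phi$ follows from the fact, which the BD--HI design must force, that any non-trivial linear combination of the $I_s$ fails to be compact.

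\textbf{The main obstacle.} The crux of the theorem, and the hardest step, is (ii): showing that every $T\in\mathcal{L}(X_{\T})$ is approximated in norm by operators of the form $\sum_{s\in F}\la_s I_s+K$ with $F\subseteq \T$ finite, $\la_s$ scalars and $K\in\mathcal{K}(X_{\T})$. I would prove this by induction on the rank of $\T$: at each node $s$, the Argyros--Haydon HI-type arguments applied inside $I_s(X_{\T})$ force $T$ to act as scalar-plus-compact there, extracting a scalar $\la_s$, and then the inductive hypothesis on the subtrees rooted at the immediate successors of $s$ controls the residue. The well-foundedness of $\T$ is what allows this transfinite recursion to terminate, and the countable branching, combined with the BD coding, is what arranges the scalars $\la_s$ into a \emph{continuous} function on $\T$, which is precisely what is needed for the approximating sums to land in the image of $\Phi$. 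Once (ii) is in hand, $\Phi$ is unambiguously defined and bounded on all of $\C al(X_{\T})$, and the open mapping theorem upgrades the resulting continuous bijective homomorphism to an isomorphism of Banach algebras, completing (iii).
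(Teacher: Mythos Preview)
Your outline follows the paper's architecture: transfinite recursion on the rank of $\T$, base case Argyros--Haydon, inductive step an AH-type sum of the subtree spaces, norm-one projections $I_s$ onto the subspace corresponding to the cone $\T_s$, and $\Phi([I_s])=\mathcal{X}_{\T_s}$. Two points, however, are not sketches but genuine gaps.

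First, the construction is missing its key combinatorial device. The paper assigns to each node $s$ an infinite set $L_s\subset\N$ of weight indices, arranged so that $L_s\cap L_t=\varnothing$ for incomparable $s,t$ and $L_t\subset L_s$ for $s\le t$; the space $X_s$ is built using only the parameters $(m_j,n_j)_{j\in L_s}$, and the outer AH-sum uses yet another disjoint set $L^0$. This disjointness is exactly what forces every bounded operator $X_s\to X_t$ (incomparable $s,t$) to be compact, and, more to the point, what makes $T\circ I_s-I_s\circ T\circ I_s$ compact for \emph{every} $T$ --- the step on which your inductive proof of (ii) rests. Your phrase ``which the BD--HI design must force'' hides precisely this mechanism; without it there is no reason the branches cannot interact modulo compacts, and the induction in (ii) does not go through. (Incidentally, in the paper $I_sI_t=0$ for incomparable $s,t$, not merely compact.)

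Second, your argument for (iii) does not close. Stone--Weierstrass gives dense range and (ii) gives injectivity, but dense range plus injective does not imply onto, so there is no ``bijective homomorphism'' to which the open mapping theorem can be applied. The paper proves separately that $\Phi$ is bounded below: for each finite combination $T=\sum_s\lambda_sI_s$ one exhibits a compact $K$ with $\|T-K\|\le(1+\e)\|\Phi([T])\|_\infty$. This comes down to a delicate Calkin-norm estimate for diagonal operators on the AH-sum (Proposition~\ref{all the money}), and the BD functionals have to be modified (an extra factor $1/m_{\min L}$ inserted) specifically to make that estimate work. Without it you obtain only that $\Phi$ embeds $\mathcal{C}al(X_{\T})$ densely into $C(\T)$, not isomorphically onto it.
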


%Properties (iii) to (vi) witness the fact that the Calkin algebra of $X_{\T}$ bares striking resemblance to the algebra $C(\T)$. Although we were not able to show in the general case that the map $\Phi$ in onto, this is achieved in the case in which the tree $\T$ has finite height.

As an application we obtain the following.
\begin{thm}\label{b}
For every countable compact metric space $K$ there exists a $\mathcal{L}_\infty$-space $X$, with $X^*$ isomorphic to $\ell_1$, so that its Calkin algebra is isomorphic, as a Banach algebra, to $C(K)$.
\end{thm}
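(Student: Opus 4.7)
The plan is to deduce Theorem~\ref{b} from Theorem~\ref{a} by realising any countable compact metric space $K$ as the compact topology of a suitable tree. The key external input is the Mazurkiewicz--Sierpi\'{n}ski classification of countable compact metric spaces: up to homeomorphism, every nonempty such space is an ordinal interval $[0, \omega^\alpha \cdot n]$ equipped with its order topology, for some uniquely determined countable ordinal $\alpha < \omega_1$ and integer $n \geq 1$. The case $\alpha = 0$ corresponds to a finite discrete $K$ and is already covered by taking a direct sum of $|K|$ copies of the Argyros--Haydon space, as recalled in the introduction. I therefore focus on $\alpha \geq 1$, and the task reduces to building, for each such pair $(\alpha, n)$, a tree $\T_{\alpha, n}$ satisfying the hypotheses of Theorem~\ref{a} that is homeomorphic to $[0, \omega^\alpha \cdot n]$, and then applying Theorem~\ref{a} to it.

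The basic tree $\T_{\alpha, 1}$ is defined by transfinite recursion on $\alpha$: set $\T_{0, 1}$ to be the one-point tree; if $\alpha = \beta + 1$ is a successor, let the root of $\T_{\alpha, 1}$ have countably infinitely many immediate successors, each identified with the root of a disjoint copy of $\T_{\beta, 1}$; and if $\alpha$ is a countable limit ordinal, fix a strictly increasing sequence $\alpha_k \nearrow \alpha$ and let the $k$-th immediate successor of the root be identified with the root of a copy of $\T_{\alpha_k, 1}$. Each $\T_{\alpha, 1}$ is well founded (a descending branch produces a strictly decreasing sequence of ordinals), has a unique root, and has $\omega$ immediate successors at every non-maximal node, so the hypotheses of Theorem~\ref{a} are met. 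A direct induction computing Cantor--Bendixson derivatives in the tree topology shows that the root of $\T_{\alpha, 1}$ is the unique point of Cantor--Bendixson rank $\alpha$; the Mazurkiewicz--Sierpi\'{n}ski classification then identifies $\T_{\alpha, 1}$ with $\omega^\alpha + 1$.

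For $n \geq 2$ I would build $\T_{\alpha, n}$ by introducing a fresh root $r$ whose first $n - 1$ immediate successors are the roots of disjoint copies of $\T_{\alpha, 1}$, while the remaining $\omega$-many immediate successors are roots of subtrees of strictly smaller rank whose ranks accumulate to $\alpha$: copies of $\T_{\beta, 1}$ in the successor case $\alpha = \beta + 1$, and copies of $\T_{\alpha_k, 1}$ with $\alpha_k \nearrow \alpha$ in the limit case. A short Cantor--Bendixson computation then shows that $r$ together with the $n - 1$ distinguished subtree roots are precisely the points of rank $\alpha$ in $\T_{\alpha, n}$, and that each of them is isolated in the $\alpha$-th derived set. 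The Cantor--Bendixson characteristic of $\T_{\alpha, n}$ is therefore $(\alpha, n)$, so $\T_{\alpha, n}$ is homeomorphic to $[0, \omega^\alpha \cdot n]$ and hence to $K$. Applying Theorem~\ref{a} to $\T := \T_{\alpha, n}$ yields an $\mathcal{L}_\infty$-space $X$ with $X^* \cong \ell_1$ and a Banach-algebra isomorphism $\C al(X) \cong C(\T) \cong C(K)$.

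The main obstacle is the inductive verification at the limit-ordinal stage: one must show that grafting subtrees of ranks $\alpha_k \nearrow \alpha$ onto a common new root in the concrete topology used by Theorem~\ref{a} forces that root to enter the $\alpha$-th Cantor--Bendixson derivative but no higher one. Once the behaviour of the tree topology at a node whose immediate subtrees have prescribed ranks is pinned down, the successor step and the combinatorial check for $n \geq 2$, where one assembles $n$ points of rank $\alpha$ with the correct accumulation pattern, reduce to routine bookkeeping.
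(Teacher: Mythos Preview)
Your proof is correct. For the case $n = 1$ it coincides with the paper's: both invoke Mazurkiewicz--Sierpi\'nski and apply Theorem~\ref{a} to a tree homeomorphic to $\omega^\alpha + 1$ (the paper simply asserts such a tree exists, while you construct it explicitly by recursion on $\alpha$).

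For $n \geq 2$ your route genuinely differs from the paper's. The paper does \emph{not} build a single tree homeomorphic to $\omega^\alpha \cdot n + 1$. Instead it fixes pairwise disjoint infinite sets $L_1, \ldots, L_n \subset \N$, forms the finite external sum $X = \bigl(\sum_{i=1}^n \oplus X_{(\T, L_i, \e)}\bigr)_\infty$, and then uses Proposition~\ref{disjoint weights compact} (every operator from $X_{(\T, L_i, \e)}$ to $X_{(\T, L_j, \e)}$ with $i \neq j$ is compact) to conclude that $\mathcal{C}al(X)$ is isometric, as a Banach algebra, to $\bigl(\sum_{i=1}^n \oplus \mathcal{C}al(X_{(\T, L_i, \e)})\bigr)_\infty$, which by Proposition~\ref{finite rank onto} is $(1+\e)$-isomorphic to $\bigl(\sum_{i=1}^n \oplus C(\omega^\alpha)\bigr)_\infty \cong C(\omega^\alpha \cdot n)$. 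Your approach trades this external direct-sum computation and the appeal to Proposition~\ref{disjoint weights compact} for a slightly more elaborate tree $\T_{\alpha,n}$ and the Cantor--Bendixson bookkeeping you outline; the paper's approach needs no tree beyond the $n=1$ case but must invoke an extra structural result about operators between the component spaces. Both arguments are short and of comparable difficulty; note, incidentally, that your separate treatment of $\alpha = 0$ is unavoidable in your framework, since no tree satisfying the hypotheses of Theorem~\ref{a} can be finite with more than one point.
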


The construction of the spaces is done recursively on the order of the tree $\T$. On the basic recursive step, i.e. in the case $\T$ is a singleton, the space $X_{\T}$ is in fact the Argyros-Haydon space. In the general case, the space $X_{\T}$ is the direct sum $(\sum\oplus X_{\T_n})_{AH}$, where the trees $\T_n$ have order smaller than the one of $\T$ and the outside norm is the Argyros-Haydon one, as it was defined by the third named author in \cite{Z}. Given the space $X_{AH}$ from \cite{AH} and the definition of the direct sum $(\sum\oplus X_n)_{AH}$ of a sequence of Banach spaces from \cite{Z}, the definition of the spaces $X_{\T}$ can be formulated quite easily, however the proofs involve many details from these constructions and are in some cases quite technical.

The paper is divided into five sections. The preliminary section is the most lengthy one and it mainly discusses the tools from \cite{AH} and \cite{Z} which are essential to obtain the result in this paper. The basic properties of constructions from these papers are presented while we also make some remarks not mentioned there. Moreover, some basic facts about trees are included. Section \ref{the definition} is devoted to the definition of the spaces $X_{\T}$ and the proof of some of their essential properties. Section \ref{the operators} focuses on the study of the operators defined on these spaces and concludes with the fact that for every such space $X_{\T}$ there exists a set of bounded norm-one projections $\{I_s:\;s\in\T\}$ so that every operator on $X_{\T}$ is approximated by  operators, each one of which is a linear combination of these projections plus a compact operator. In the fourth section, using the aforementioned result, we prove that the Calkin algebra of $X_{\T}$ is homomorphic to $C(\T)$, the algebra of continuous functions on $\T$. In the fifth, and final, section we use the properties of our construction to deduce the main result, i.e. that for every countable compact metric space $K$, the exists a Banach space whose Calkin algebra is isomorphic, as a Banach algebra to $C(K)$.

\section{Preliminaries}\label{preliminaries}
We begin with a preliminaries section which includes a lot of facts and estimations which concern Bourgain-Delbaen spaces, the Argyros-Haydon space as well as the Argyros-Haydon sum of a sequence of Banach spaces as it was defined in \cite{Z}. The section is unavoidably technical and extensive as our methods rely on some details concerning the just mentioned constructions. We also mention some basics concerning trees.

We start with the definition of a $\mathcal{L}_{\infty}$-space, which first appeared in \cite{LP}. We recall that for Banach spaces $Z,W$ and a constant $C>0$, we say that $Z$ is $C$-isomorphic to $W$, denoted as $Z\simeq^C W$, if there exists an onto linear isomorphism $T:Z\to W$ such that $\|T\|\|T^{-1}\|\leq C$.

\begin{definition}\label{BD}
We say that a separable Banach space $X$ is a $\mathcal{L}_{\infty,C}$-space where $C>0$ is a constant, if there exists a strictly increasing sequence $(Y_n)_{n\in\N}$ of subspaces of $X$ such that $Y_n\simeq^C\ell_{\infty}(\dim Y_n)$ for every $n\in\N$ and $X=\overline{\cup_{n\in\N}Y_n}$.
\end{definition}

Using the Bourgain-Delbaen method of constructing $\mathcal{L}_{\infty}$-spaces $X$ \cite{BD} we can specify the constant $C>0$ as well as the sequence $(Y_n)_{n\in\N}$ that correspond to $X$ in Definition \ref{BD}. We say that a space $X$ is a BD-$\mathcal{L}_{\infty}$-space if it is constructed via the BD-method. Two fixed parameters $0<a<1$, $0<b<\frac{1}{2}$ with $a+b>1$ are used and there exist
\begin{enumerate}
\item[(i)] a sequence $(\Delta_n)_{n\in\N}$ of pairwise disjoint subsets of $\N$, where we denote their union by $\Gamma$ and we set $\Gamma_n=\cup_{i=1}^n\Delta_i$ for each $n\in\N$,
\item[(ii)] linear extension operators $i_n:\ell_{\infty}(\Gamma_n)\to \ell_{\infty}(\Gamma)$ with $\|i_n\|\|i_n^{-1}\|\leq \frac{1}{1-2b}$ for every $n\in\N$,
\end{enumerate}
such that $X=\overline{\cup_{n\in\N}Y_n}$, where $Y_n=i_n(\ell_{\infty}(\Gamma_n))$ for every $n\in\N$. In particular, the BD-space $X$ is a $\mathcal{L}_{\infty,C}$-space where $C=\frac{1}{1-2b}$.

\subsection{The space $X_{AH}$}

We denote by $\mathfrak{X}_{AH}$ the Argyros Haydon space constructed in \cite{AH}. The space $X_{AH}$ is a separable HI $\mathcal{L}_{\infty,2}$-space such that $\mathfrak{X}_{AH}^*\simeq^2\ell_1$. The construction is based on two fixed strictly increasing sequences of natural numbers $(m_j,n_j)_{j\in\N}$ (with $m_1\geq 4$) and it is a generalization of the BD method for parameters $a=1$ and using instead of $b$ the sequence $(\frac{1}{m_j})_{j\in\N}$. In particular, there exists a sequence $(\Delta_n)_{n\in\N}$ and linear operators $i_n$ as above having the following properties:
\begin{enumerate}
\item[(i)] To each
element $\gamma\in\Gamma$ are assigned the rank of
$\gamma$, $rank(\gamma)$, the weight of $\gamma$, $w(\gamma)$, and
the age of $\gamma$, $a(\gamma)$ so that
\begin{enumerate}
\item $rank(\gamma)=n$ whenever $\gamma\in\Delta_n$,
\item $w(\gamma)=m_j$ for some $j\leq n$ whenever $rank(\gamma)=n$,
\item $a(\gamma)=a\leq n_j$,
 \end{enumerate}
\item[(ii)] $\|i_n\|\|i_n^{-1}\|\leq 2$.
\end{enumerate}

Moreover, the space $\mathfrak{X}_{AH}$ admits a FDD $(M_n)_{n\in\N}$ where $M_n=i_n(\ell_{\infty}(\Delta_n))$ is isometric to $\ell_{\infty}(\Delta_n)$.
Using the FDD we define the range of an element $x$ in $\mathfrak{X}_K$ as the minimum interval $I$ of $\N$ such that $x\in\sum_{n\in I}\oplus M_n$. A bounded sequence $(x_k)_k$ in $\mathfrak{X}_K$ is called a block sequence if $\max\ran x_k<\min\ran x_{k+1}$ for every $k$.

\subsection{AH-$\mathcal{L}_{\infty}$ sums of separable Banach spaces}
In this subsection we remind the basic ingredients from \cite{Z} of constructing sums $(\sum_n\oplus X_n)_{AH}$ where $(X_n)_n$ is a sequence of separable Banach spaces. We start with the following notation.

\begin{notation}\label{terminology}
Let $(E_n,\|\cdot\|_{E_n})_{n=1}^{\infty}$ be sequences of
separable Banach spaces. By $(\sum_{n}\oplus E_n)_\infty$ we denote the space of vector elements $\overrightarrow{x}=(x_n)_{n=1}^{\infty}$ such that the $n$-th
coordinate of $\overrightarrow{x}$, is in $E_n$ for all $n\in\N$ and $\|\overrightarrow{x}\|_\infty = \sup\|x_n\|_{E_n}$ is finite. By $c_{00}(\sum_{n}\oplus E_n)$ we denote
the subspace of $(\sum_{n}\oplus E_n)_\infty$ consisting of all
$\overrightarrow{x}=(x_n)_{n=1}^{\infty}$ for which there exists $n_0\in\N$ with the property that $x_n=0$
for every $n\geq n_0$. For a vector
$\overrightarrow{x}=(x_n)_{n=1}^{\infty}\in c_{00}(\sum_{n}\oplus
E_n)$ we define the support of $x$ as \[\supp x=\{n \inn:\ x_n\neq
0\}\]
For every finite interval $J\subset\N$ we denote by $R_{J}$
the natural restriction map \[R_{J}:\left(\sum_{n}\oplus
E_n\right)_\infty\to\left(\sum_{n\in J}\oplus E_n\right)_{\infty}\] defined as
$R_{J}(\overrightarrow{x})=(x_n)_{n\in J}$ for
every $\overrightarrow{x}=(x_n)_{n=1}^{\infty}\in
c_{00}(\sum_{n}\oplus E_n)$. For $I,J$ subsets of $\N$ we say that
$I,J$ are successive denoted as $I<J$ if $\max I < \min J$. For
$\overrightarrow{x},\overrightarrow{y},\overrightarrow{z}$ vector
elements of $c_{00}(\sum_{n}\oplus E_n)$ such that $\supp
\overrightarrow{x}<\supp \overrightarrow{y}<\supp
\overrightarrow{z}$ we denote by
$(\overrightarrow{x},\overrightarrow{y},\overrightarrow{z})$ the
vector
$\overrightarrow{x}+\overrightarrow{y}+\overrightarrow{z}\in
c_{00}(\sum_{n}\oplus E_n)$.

By $(\sum_{n=1}^{\infty}\oplus E_n)_1$ we denote the space of vector elements $\overrightarrow{x}=(x_n)_{n=1}^{\infty}$ such that the $n$-th
coordinate of $\overrightarrow{x}$, is in $E_n$ and $\|\overrightarrow{x}\|_1 = \sum_n\|x_n\|_{E_n}$ is finite. If
the elements of $c_{00}(\sum_{n}\oplus E_n)$ are considered to be
functionals (i.e $E_n$ are dual spaces) we use letters as
$\overrightarrow{f},\overrightarrow{g},\overrightarrow{h}$, etc
for their representation. For $\overrightarrow{x}\in
(\sum_{n}\oplus E_n)_\infty$ and $\overrightarrow{f}\in
c_{00}(\sum_{n}\oplus E_n^*)$ we denote by
$\overrightarrow{f}(\overrightarrow{x})$ the inner product $\sum_n
f_n(x_n)$.
\end{notation}

We continue with the definition of spaces
$(\sum_{n=1}^{\infty}\oplus X_n)_{BD}$ for a sequence
$(X_n,\|\cdot\|_n)_{n\in\N}$ of separable Banach spaces.

\begin{definition}\label{mathcal}
Let $(X_n,\|\cdot\|_n)_{n\in\N}$ be a sequence of separable Banach
spaces. A Banach space $\mathcal{Z}$ is called a Bourgain
Delbaen(BD)-$\mathcal{L}_{\infty,C}$-sum of the sequence
$(X_n,\|\cdot\|_n)_n$, denoted as
$\mathcal{Z}=(\sum_{n=1}^{\infty}\oplus X_n)_{BD}$, if there
exists a sequence $(\Delta_n)_{n\in\N}$ of finite, pairwise
disjoint subsets of $\N$ and the following hold:
\begin{enumerate}
\item[(i)] The space $\mathcal{Z}$ is a subspace of
$(\sum_{n=1}^{\infty}\oplus(X_n\oplus\ell_{\infty}(\Delta_n))_{\infty})_{\infty}=\mathcal{Z}_{\infty}$.
\item[(ii)]  For every $n$ there exists a linear extension operator
\[i_n:\left(\sum_{k=1}^n\oplus\left(X_k\oplus\ell^{\infty}(\Delta_k)\right)_\infty\right)_\infty\rightarrow\left(\sum_{n=1}^{\infty}\oplus\left(X_n\oplus\ell_{\infty}(\Delta_n)\right)_{\infty}\right)_{\infty}\]
such that

\begin{enumerate}
\item $\|i_n\|\leq C$ for every $n\in\mathbb{N}$. \item Each
$\overrightarrow{x}\in\left(\sum_{k=1}^n\oplus\left(X_k\oplus\ell_{\infty}(\Delta_k)\right)_\infty\right)_\infty$
satisfies the following:
\begin{enumerate}
\item[($\iota$)] $R_{[1,n]}(i_n(\overrightarrow{x}))=\overrightarrow{x}$
while $R_{(n,\infty)}(i_n(\overrightarrow{x}))$ is an element of
$\left(\sum_{k=n+1}^{\infty}\oplus(\{0\}\oplus\ell_{\infty}(\Delta_k))_{\infty}\right)_\infty$.
\item[($\iota\iota$)]
$i_l(R_{[1,l]}i_n(\overrightarrow{x}))=i_n(\overrightarrow{x})$
for every $l\geq n+1$
\end{enumerate}
\end{enumerate}
\item[(iii)] Setting
$Y_n=i_n\left[\left(\sum_{k=1}^n\oplus(X_k\oplus\ell^{\infty}(\Delta_k))_\infty\right)_\infty\right]$,
the union $\cup_n Y_n$ is dense in $\mathcal{Z}$.
\end{enumerate}
\end{definition}
A space $\mathcal{Z}=(\sum_{n=1}^{\infty}\oplus X_n)_{BD}$ can be
obtained by modifying the original Bourgain-Delbaen
$\mathcal{L}_{\infty}$ method of construction in \cite{BD} as it
was described in \cite{Z}. In particular, applying this modification on the BD-method of Argyros-Haydon we can construct
BD-$\mathcal{L}_{\infty,2}$ sums $(\sum_{n=1}^{\infty}\oplus
X_n)_{BD}$ of separable Banach spaces $X_n$, denoted as
$(\sum\oplus X_n)_{AH}$. The next result is proved in (\cite{Z}).

\begin{proposition}\label{dualZAH}
Let $(X_n,\|\cdot\|_n)_{n\in\N}$ be a sequence of separable Banach
spaces. The space $\mathcal{Z}=(\sum\oplus X_n)_{AH}$ has the following properties:
\begin{enumerate}

\item[(i)] $\mathcal{Z}=(\sum\oplus X_n)_{AH}$ admits a
shrinking Schauder Decomposition $(Z_n)_{n\in\N}$ such that each
$Z_n=i_n[(X_n\oplus\ell_{\infty}(\Delta_n))_{\infty}]$.

\item[(ii)]
Every horizontally block subspace of $\mathcal{Z}$ is HI.

\item[(iii)]
The dual $\mathcal{Z}^*$ is 2-isomorphic to
$(\sum_{n=1}^{\infty}\oplus(X_n^*\oplus\ell_1(\Delta_n))_1)_1$.

 \end{enumerate}

\end{proposition}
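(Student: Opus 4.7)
The plan is to verify the three claims in order, exploiting both the formal structure of Definition~\ref{mathcal} and the mixed Tsirelson ingredients that go into the Argyros--Haydon sum. For (i), first I would set $Z_n := i_n[(X_n\oplus\ell_\infty(\Delta_n))_\infty]$ and read condition $(\iota\iota)$, namely $i_\ell\circ R_{[1,\ell]}\circ i_n = i_n$ for $\ell\ge n$, as saying that the operators $P_n := i_n\circ R_{[1,n]}$ are nested projections on $\mathcal{Z}_\infty$ with $P_n P_m = P_{\min(n,m)}$ and ranges $Y_n = Z_1\oplus\cdots\oplus Z_n$. Since $\|i_n\|\le 2$, these projections are uniformly bounded on $\cup_n Y_n$ and therefore extend by density to a Schauder decomposition of $\mathcal{Z}$. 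Shrinkingness is the delicate part: I would adapt the Argyros--Haydon scheme to show that every seminormalized block sequence $(\overrightarrow z_k)$ with respect to $(Z_n)$ has a subsequence which is a rapidly increasing sequence in the outer AH sense, and for such a sequence the basic inequality yields $\|\sum_{k\in F}\overrightarrow z_k\|\lesssim |F|/m_j$ on $j$-admissible sets $F$. This rules out $\ell_1$-behaviour on tails of the decomposition and, combined with the uniform boundedness of the $P_n$, forces $\|z^*\circ(I-P_n)\|\to 0$ for every $z^*\in\mathcal{Z}^*$.

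For (iii), I would use the biorthogonal functionals of the decomposition from (i) together with the $\ell_1$-duality of each summand. The evaluation functionals $e_\gamma^*$ with $\gamma\in\Delta_n$ provide an $\ell_1(\Delta_n)$-piece in the $n$th slot, while pre-composition with the natural embedding $X_n\hookrightarrow Z_n$ provides an $X_n^*$-piece. Assembling these across $n$ yields a bounded linear map from $(\sum\oplus(X_n^*\oplus\ell_1(\Delta_n))_1)_1$ onto $\mathcal{Z}^*$ of norm at most $2$; shrinkingness from (i) ensures surjectivity, and the quantitative lower bound for the extension operators built into the AH variant of the BD construction supplies the matching inverse estimate, giving the $2$-isomorphism.

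For (ii), a horizontally block subspace $W$ is spanned by a block sequence with respect to $(Z_n)$. To prove $W$ is HI I would fix infinite-dimensional $U,V\subseteq W$ and $\delta>0$, extract RIS blocks $(u_k)$ in $U$ and $(v_k)$ in $V$ of common weight $m_{2j}$, and form a combination $u-v=\sum c_k(u_k-v_k)$ for which a single conditional analysis functional of weight $m_{2j+1}$ supplies matching upper and lower estimates that force $\|u-v\|<\delta\|u+v\|$. The hard part will be transferring this RIS machinery from $\mathfrak{X}_{AH}$ to $(\sum\oplus X_n)_{AH}$ in the presence of arbitrary summands $X_n$: one must verify that the basic inequality and the tree-analysis of AH conditional functionals survive when the vectors involved carry nontrivial $X_n$-components that are not controlled by the outer mixed Tsirelson norm. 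The key technical point is that the outer AH coding only sees the horizontal block structure, so the $X_n$-components should contribute only through their norms and can be treated, for the purposes of the basic inequality, as abstract block entries of the outer Argyros--Haydon FDD.
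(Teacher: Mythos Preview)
The paper does not prove this proposition at all: the sentence immediately preceding it reads ``The next result is proved in (\cite{Z})'', and indeed Proposition~\ref{dualZAH} is quoted verbatim from Zisimopoulou's paper, with no argument supplied here. So there is no in-paper proof to compare your proposal against.

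That said, your outline is essentially the strategy carried out in \cite{Z}, which in turn transplants the Argyros--Haydon machinery to the sum setting. One small correction: in your argument for shrinkingness you write that ``every seminormalized block sequence $(\overrightarrow z_k)$ has a subsequence which is a rapidly increasing sequence''. This is not literally true --- an arbitrary block need not satisfy the RIS weight condition (iii). What one actually does (and what \cite{AH} and \cite{Z} do) is build, inside the span of any block sequence, suitable $\ell_1^{n_j}$-averages which then form a RIS; the basic inequality applied to these averages is what rules out $\ell_1$-behaviour and gives weak nullity of block sequences, hence shrinkingness. With that adjustment, your sketches of (i)--(iii) match the approach in \cite{Z}, including your closing observation for (ii) that the outer AH coding and tree-analysis depend only on the horizontal block structure, so the $X_n$-components enter only through their norms.
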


\begin{definition}\label{range}
For an element $z$ of $\mathcal{Z}=(\sum\oplus X_n)_{AH}$ we define the range of $z$, denoted by $\ran z$, as the minimum interval $I\subset\N$ such that $z\in\sum_{n\in I}\oplus Z_n$.
\end{definition}

%For sake of brevity we write $(\sum_k\oplus(X_k\oplus\ell_{\infty}(\Delta_k)))_{\infty}$ instead of $(\sum_{k}\oplus(X_k\oplus\ell_{\infty}(\Delta_k))_{\infty})_{\infty}$ {\color{blue} I think that there is no gain from this}. Following the above notation,

 Since $\mathcal{Z}$ is a subspace of $\mathcal{Z}_{\infty}$, we can consider the restriction mappings $R_{[1,n]}:\mathcal{Z}\to \left(\sum_{k=1}^n\oplus\left(X_k\oplus\ell_{\infty}(\Delta_k)\right)_\infty\right)_{\infty}$.
We denote by $P_{[1,n]}:\mathcal{Z}\to\mathcal{Z}$ the projections associated with the Schauder Decomposition $(Z_n)_{n\in\N}$ that are defined as $P_{[1,n]}=i_n\circ R_{[1,n]}$. We write $P_n$ instead of $P_{\{n\}}$ and for every $k\leq m$ we define $P_{(k,m]}=\sum_{i=k+1}^m P_i=i_m\circ R_{[1,m]}-i_k\circ R_{[1,k]}$.

We also identify $\left(\sum_{n=k}^m\oplus (X_n\oplus\{0\})_\infty\right)_{\infty}$ with $(\sum_{n=k}^m\oplus X_n)_{\infty}$ and similarly $(\sum_{n=k}^m\oplus \left(\{0\}\oplus\ell_{\infty}(\Delta_n)\right)_\infty)_{\infty}$ with $(\sum_{n=k}^m\oplus \ell_{\infty}(\Delta_n))_{\infty}$. For $\gamma\in\Delta_n$ we denote by $e_{\gamma}^*$ the usual vector
element of $\ell_1(\Delta_n)$. We extend each element
$b^*=\sum_{\gamma}a_{\gamma}e_{\gamma}^*\in(\sum_{n=k}^m\oplus\ell_1(\Delta_n))_1$
to a functional $\overrightarrow{b}^*:\mathcal{Z}\to\R$ defined as
$\overrightarrow{b}^*=b^*\circ R_{[1,n]}$. Similarly we extend
every $\overrightarrow{f}\in(\sum_{n=k}^m\oplus X_n^*)_1$ to a
functional $\overrightarrow{f}:\mathcal{Z}\to\R$ %{\color{blue} check later}.

The construction of $\mathcal{Z}=(\sum_{n=1}^{\infty}\oplus
X_n)_{AH}$ is based on the same parameters $(m_j,n_j)_{j=1}^{\infty}$ of $X_{AH}$. More precisely, for every $n\in\N$ each set $\Delta_{n+1}$ is the union of pairwise
disjoint sets $\Delta_{n+1}=\Delta_{n+1}^0\cup\Delta_{n+1}^1$ satisfying the following:
\begin{enumerate}
\item[(i)]
For every $\gamma\in\Delta_{n+1}^0$ there is $\overrightarrow{f}\in\left(\sum_{k=1}^{n}\oplus X_k^*\right)_1$ with $\|\overrightarrow{f}\| \leq 1$ so that
\begin{equation}\label{tadaaah}
\overrightarrow{e}_\gamma^* = \overrightarrow{e}_\gamma^*\circ P_{n+1} + \frac{1}{m_1}\overrightarrow{f}.
\end{equation}
\item[(ii)] The set $\Delta_{n+1}^1$ is defined similarly as the set $\Delta_n$ of $X_{AH}$ and consists of elements with rank, weight, age depending on $(m_j,n_j)_{j=1}^{\infty}$. Moreover, as it was stated in \cite{Z}, for every
$\gamma\in\Delta_{n+1}^1$ with weight $w(\gamma)=m_j$ there exists a
family $\{p_i,q_i,\xi_i,\overrightarrow{b}_i^*\}_{i=1}^a$, called
the analysis of $\gamma$, such that:

\begin{enumerate}

\item $0\leq p_1 < q_1 < p_2 < q_2 <\cdots<p_a<q_a = n$,

 %$a\leq n_j$, $0<p_i<q_i$ for every $i$ and $q_a=n$.

\item
$\xi_i\in\Delta_{q_i+1}^1$ with $w(\xi_i)=m_j$ and
$b_i^*\in(\sum_{k=p_i}^{q_i}\oplus\ell_1(\Delta_k))_1$,

\item
$\overrightarrow{e_{\gamma}}^*=\sum_{i=1}^a\overrightarrow{e_{\xi_i}}^*\circ
P_{\{q_i+1\}}+\frac{1}{m_j}\sum_{i=1}^a\overrightarrow{b_i}^*\circ
P_{(p_i,q_i]}$.
\end{enumerate}

\end{enumerate}

\begin{remark}\label{not exaaactly the same but still pretty close}
In \cite{Z} \eqref{tadaaah} is actually slightly different. More precisely, in that case for every $n\in\N$ and $\gamma\in\Delta_{n+1}^0$, we have that $\overrightarrow{e}_\gamma^* = \overrightarrow{e}_\gamma^*\circ P_{n+1} + \overrightarrow{f}$, i.e. the constant $1/m_1$ is missing. All results from \cite{Z} also hold with this modification, the only difference being that in various cases some constants have to be adjusted. This modification is used only to prove Proposition \ref{all the money}, which is essential to obtaining the main result of this paper.
\end{remark}

For the rest of this subsection we fix a sequence of separable Banach spaces $(X_n,\|\cdot\|_n)_{n\in\N}$ and let $\mathcal{Z}=(\sum_{n=1}^{\infty}\oplus
X_n)_{AH}$ with Schauder Decomposition $(Z_n)_{n\in\N}$ be as stated in Proposition \ref{dualZAH}.

\begin{lemma}\label{analyze back}
Let $n,q\inn$ with $q>n$ and $\gamma\in\Delta_q^1$ with $w(\gamma) = m_j$ for some $j\inn$. Consider the functional $g:\mathcal{Z}\rightarrow \mathbb{R}$ with $g = \overrightarrow{e}_{\gamma}^*\circ P_{[1,n]}$. Then one of the following holds:
\begin{itemize}

\item[(i)] $g = 0$,

\item[(ii)] there are $p_1\leq n$ and $\overrightarrow{b}^*\in\left(\sum_{k=1}^n\oplus\ell_1(\Delta_k)\right)_1$ with $\|\overrightarrow{b}^*\|\leq 1$ so that $g = \frac{1}{m_j}\overrightarrow{b}^*\circ P_{(p_1,n]}$ or

\item[(iii)] there are $p_0<p_1\leq n$ and $\gamma'\in\Delta_{p_0}$ and $\overrightarrow{b}^*$ as before so that $g = \overrightarrow{e}_{\gamma'}^* + \frac{1}{m_j}\overrightarrow{b}^*\circ P_{[p,n]}$.

\end{itemize}
\end{lemma}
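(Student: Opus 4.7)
The natural plan is to apply the analysis formula for $\overrightarrow{e}_\gamma^*$ given in clause (ii) of the description of $\Delta_q^1$, post-compose with $P_{[1,n]}$, and track which summands survive. Writing $\{p_i,q_i,\xi_i,\overrightarrow{b}_i^*\}_{i=1}^a$ for the analysis of $\gamma$, so that $q_a=q-1$ and
\[
\overrightarrow{e}_\gamma^* \;=\; \sum_{i=1}^a \overrightarrow{e}_{\xi_i}^*\circ P_{\{q_i+1\}} \;+\; \frac{1}{m_j}\sum_{i=1}^a \overrightarrow{b}_i^*\circ P_{(p_i,q_i]},
\]
I would use that $P_{\{q_i+1\}}\circ P_{[1,n]}$ equals $P_{\{q_i+1\}}$ if $q_i<n$ and $0$ otherwise, while $P_{(p_i,q_i]}\circ P_{[1,n]}=P_{(p_i,q_i]\cap[1,n]}$. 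Hence only the triples with $q_i<n$ survive in full, and at most one additional triple contributes the partial block $P_{(p_{k+1},n]}$, namely the one with $p_{k+1}<n\leq q_{k+1}$. Let $k\in\{0,1,\dots,a-1\}$ be the largest index with $q_k<n$ (and $k=0$ if none exists); since $n<q$ forces $q_a\geq n$, such a $k$ exists.

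If $k=0$ and $p_1\geq n$ then every summand vanishes, giving case (i). If $k=0$ and $p_1<n$ only $\frac{1}{m_j}\overrightarrow{b}_1^*\circ P_{(p_1,n]}$ remains, which is case (ii) with $\overrightarrow{b}^*=\overrightarrow{b}_1^*$, whose norm is at most $1$ by clause (b) of the analysis. For $k\geq 1$ the key point is the \emph{nested analysis property}: the node $\xi_k\in\Delta_{q_k+1}^1$ has age $k$ and weight $m_j$, and its analysis is precisely the initial segment $\{p_l,q_l,\xi_l,\overrightarrow{b}_l^*\}_{l=1}^k$ of the analysis of $\gamma$. Applying the analysis formula to $\xi_k$ yields
\[
\overrightarrow{e}_{\xi_k}^* \;=\; \sum_{l=1}^k \overrightarrow{e}_{\xi_l}^*\circ P_{\{q_l+1\}} \;+\; \frac{1}{m_j}\sum_{l=1}^k \overrightarrow{b}_l^*\circ P_{(p_l,q_l]},
\]
so the surviving part of $g$ collapses to $\overrightarrow{e}_{\xi_k}^* + \frac{1}{m_j}\overrightarrow{b}_{k+1}^*\circ P_{(p_{k+1},n]}$, with the second summand omitted when $p_{k+1}\geq n$. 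Taking $\gamma'=\xi_k$, $p_0=q_k+1$, $p_1=p_{k+1}$ and $\overrightarrow{b}^*=\overrightarrow{b}_{k+1}^*$ (or $\overrightarrow{b}^*=0$ and any admissible $p_1$ if the second summand is absent) lands us in case (iii).

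The main obstacle is really the nested analysis property just invoked. This is a structural feature of the AH-recursion, stemming from the fact that elements of $\Delta^1$ of age $a$ are defined inductively by adjoining a new triple $(p_a,q_a,\overrightarrow{b}_a^*)$ to an already-constructed node of age $a-1$; I would record it as a short auxiliary observation pointing back to the constructions in \cite{AH,Z}. A minor bookkeeping point is the degenerate case $p_{k+1}=q_k+1$, in which the strict inequality $p_0<p_1$ would fail. There one either interprets (iii) with $p_0\leq p_1\leq n$ — legitimate because $\overrightarrow{e}_{\xi_k}^*$ lives at rank $q_k+1$ while $P_{(p_{k+1},n]}$ acts on strictly larger ranks, keeping the expression unambiguous — or one uses the rank-separation built into the parameters $(m_j,n_j)$ to rule out the degeneracy altogether.
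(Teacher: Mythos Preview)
Your argument is correct and follows essentially the same route as the paper: the paper's proof hinges on the identity
\[
\overrightarrow{e}_{\gamma}^* = \overrightarrow{e}_{\xi_{i_0}}^* + \sum_{i=i_0+1}^a \overrightarrow{e}_{\xi_i}^*\circ P_{\{q_i+1\}} + \frac{1}{m_j}\sum_{i=i_0+1}^a \overrightarrow{b}_i^*\circ P_{(p_i,q_i]},
\]
which is precisely your ``nested analysis property'' rearranged (the paper cites \cite[Proposition~4.5]{AH} for it), and then composes with $P_{[1,n]}$ after choosing $i_0=\max\{i:q_i+1\leq n\}$, exactly your index $k$. Your observation about the possible degeneracy $p_0=p_1$ is a fair cosmetic point that the paper does not address either.
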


\begin{proof}
Let $\overrightarrow{e_{\gamma}}^*=\sum_{i=1}^a\overrightarrow{e_{\xi_i}}^*\circ
P_{\{q_i+1\}}+\frac{1}{m_j}\sum_{i=1}^a\overrightarrow{b_i}^*\circ
P_{(p_i,q_i]}$ as in (c) above. Note that as in \cite[Proposition 4.5]{AH} we have that for every $1\leq i_0 < a$ the following holds:
\begin{equation}\label{mama mia}
\overrightarrow{e_{\gamma}}^* = \overrightarrow{e_{\xi_{i_0}}}^* + \sum_{i=i_0 + 1}^a\overrightarrow{e_{\xi_i}}^*\circ
P_{\{q_i+1\}}+\frac{1}{m_j}\sum_{i=i_0 + 1}^a\overrightarrow{b_i}^*\circ
P_{(p_i,q_i]}
\end{equation}

Let us first assume that $n < q_1+1$. If $n\leq p_1$ then we easily conclude that $\overrightarrow{e}_{\gamma}^*\circ P_{[1,n]} = 0$ and the first assertion holds. Otherwise $p_1<n$ and $\overrightarrow{e}_{\gamma}^*\circ P_{[1,n]} = \frac{1}{m_j}\overrightarrow{b}_i^*\circ P_{(p_1,n]}$, i.e. the second assertion holds.

Let us now assume that $q_1 + 1 \leq n$ and set $i_0 = \max \{i: q_i + 1\leq n\}$. Since $q>a$, by property (a) it follows that $i_0 < a$ and so, using \eqref{mama mia}, we obtain
\begin{equation*}
\overrightarrow{e}_{\gamma}^*\circ P_{[1,n]} = \overrightarrow{e_{\xi_{i_0}}}^*\circ P_{[0,n]} + \frac{1}{m_j}\overrightarrow{b}_{i_0+1}^*\circ P_{(p_{i_0+1},n]}
\end{equation*}
(where if $p_{i_0+1} > n$ then the last part of the right hand side in the above inequality is zero). Since $\xi_{i_0}\in \Delta_{q_{i_0}+1}^{1}$ and $i_0+1\leq n$ we conclude that $\overrightarrow{e_{\xi_{i_0}}}^*\circ P_{[0,n]} = \overrightarrow{e_{\xi_{i_0}}}^*$ and hence $g = \overrightarrow{e_{\xi_{i_0}}}^* + \frac{1}{m_j}\overrightarrow{b}_{i_0+1}^*\circ P_{(p_{i_0+1},n]}$.
\end{proof}

\begin{lemma}\label{isometryZn} The space $Z_n$ is isometric to $(X_n\oplus\ell_{\infty}(\Delta_n))_{\infty}$ for every $n\in\N$. More precisely, the operator $i_n$ restricted onto $\left(X_n\oplus\ell_\infty(\Delta_n)\right)_\infty$ (viewed as a subspace of $\left(\sum_{k=1}^n\oplus\left(X_k\oplus\ell_\infty(\Delta_k)\right)_\infty\right)_\infty$), is an isometry.
\end{lemma}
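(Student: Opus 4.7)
The plan is to split the proof into the two inequalities $\|i_n(\overrightarrow{x})\| \geq \|x_n\|$ and $\|i_n(\overrightarrow{x})\| \leq \|x_n\|$, where $\overrightarrow{x}$ denotes the embedding of $x_n \in (X_n\oplus\ell_\infty(\Delta_n))_\infty$ into $\left(\sum_{k=1}^n\oplus(X_k\oplus\ell_\infty(\Delta_k))_\infty\right)_\infty$ with zeros at the coordinates $k<n$. The first inequality is immediate: by property (ii)(b)($\iota$) of Definition \ref{mathcal} one has $R_{[1,n]}(i_n(\overrightarrow{x})) = \overrightarrow{x}$, so in particular $R_{\{n\}}(i_n(\overrightarrow{x})) = x_n$, and the coordinate projection $R_{\{n\}}$ in an $\ell_\infty$-sum has norm at most one.

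For the reverse inequality, write $\|i_n(\overrightarrow{x})\|_{\mathcal{Z}_\infty} = \sup_k \|R_{\{k\}}(i_n(\overrightarrow{x}))\|_{(X_k\oplus\ell_\infty(\Delta_k))_\infty}$; this equals zero for $k<n$, equals $\|x_n\|$ for $k=n$, and for $k>n$ is of the form $(0,c_k) \in \{0\}\oplus\ell_\infty(\Delta_k)$ by (ii)(b)($\iota$). The problem reduces to showing $|\overrightarrow{e}_\gamma^*(i_n(\overrightarrow{x}))| \leq \|x_n\|$ for every $\gamma \in \Delta_k$ with $k>n$, which I would establish by induction on $k$. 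The key preliminary observation is that, using the identity $i_l\circ R_{[1,l]}\circ i_n = i_n$ for $l\geq n+1$ from (ii)(b)($\iota\iota$), the FDD projections satisfy $P_l(i_n(\overrightarrow{x})) = 0$ for every $l\neq n$ while $P_n(i_n(\overrightarrow{x})) = i_n(\overrightarrow{x})$.

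For the inductive step, if $\gamma \in \Delta_k^0$ then the expansion \eqref{tadaaah} together with $P_k(i_n(\overrightarrow{x})) = 0$ reduces the quantity to $\tfrac{1}{m_1}|\overrightarrow{f}(i_n(\overrightarrow{x}))|$; since $\overrightarrow{f} \in \left(\sum_{j<k}\oplus X_j^*\right)_1$ pairs only with the $X_j$-components and only the one at $j=n$ is nonzero, the bound $\tfrac{1}{m_1}\|f_n\|\,\|x_n\| \leq \|x_n\|$ follows. If $\gamma \in \Delta_k^1$ with analysis $\{p_i,q_i,\xi_i,\overrightarrow{b}_i^*\}_{i=1}^a$ and weight $m_j$, the formula in (ii)(c) produces two kinds of terms: a first kind $\overrightarrow{e}_{\xi_i}^*(P_{\{q_i+1\}}(i_n(\overrightarrow{x})))$, nonzero only when $q_i+1=n$ and then equal to the $\xi_i$-coordinate of the $\ell_\infty(\Delta_n)$-part of $x_n$, which is bounded by $\|x_n\|$; and a second kind $\tfrac{1}{m_j}\overrightarrow{b}_i^*(P_{(p_i,q_i]}(i_n(\overrightarrow{x})))$, nonzero only when $n\in(p_i,q_i]$, in which case $P_{(p_i,q_i]}(i_n(\overrightarrow{x})) = i_n(\overrightarrow{x})$. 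Using the strict ordering $p_1<q_1<p_2<q_2<\cdots<p_a<q_a$, a short case check shows that the two kinds cannot both contribute for the same $\gamma$; and the second-kind expression is bounded by combining $\|b_i^*\|_1\leq 1$ with the inductive bound $\|c_l\|_\infty \leq \|x_n\|$ for $n<l\leq q_i$ together with $\|z_n\|_\infty \leq \|x_n\|$ at $l=n$ (where $z_n$ denotes the $\ell_\infty(\Delta_n)$-part of $x_n$), yielding $\tfrac{1}{m_j}\|x_n\| \leq \|x_n\|$.

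The main obstacle I anticipate is the bookkeeping for $\Delta_k^1$: checking that only one kind of term fires for each $\gamma$, and making sure the second-kind estimate invokes the inductive hypothesis only at levels $l\leq q_i\leq k-1$, so that the induction on $k$ closes without circularity. Once both observations are settled, the bound $|\overrightarrow{e}_\gamma^*(i_n(\overrightarrow{x}))|\leq\|x_n\|$ holds for every $\gamma$, which gives the desired $\|i_n(\overrightarrow{x})\| \leq \|x_n\|$ and hence the isometry.
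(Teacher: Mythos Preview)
Your proposal is correct and follows essentially the same approach as the paper: both use that $P_l(i_n(\overrightarrow{x}))=0$ for $l\neq n$ to reduce the analysis formula for $\overrightarrow{e}_\gamma^*$ to a single surviving term bounded by $\|x_n\|$. Your version is slightly more detailed---you explicitly handle the $\Delta_k^0$ case and set up the induction on $k$ to control the extended coordinates $c_l$, whereas the paper's more compressed argument writes $\overrightarrow{b}_i^*(P_{(p_i,q_i]}z)=b_i^*(x)$ directly.
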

\begin{proof}
Let $z=i_n(\overrightarrow{x})\in Z_n$ where
$\overrightarrow{x}\in(X_n\oplus\ell_{\infty}(\Delta_n))_{\infty}$. We will prove that $\|z\|=\|x\|_{\infty}$.
Observe that by Definition \ref{mathcal}
$R_{(n,\infty)}(z)\in(\sum_{k>n}\oplus\ell_{\infty}(\Delta_n))_{\infty}$.
Let $k>n$ and $\gamma\in\Delta_k^1$ with $w(\gamma)=m_j$ and
analysis $\{p_i,q_i,\xi_i,\overrightarrow{b}_i^*\}_{i=1}^a$. Note that there exists at most one $1\leq i\leq a$ such that either $n=q_i+1$ or $n\in (p_i,q_i]$. In the first case we have that
$\overrightarrow{e_{\gamma}}^*(z)=\overrightarrow{e_{\xi_i}}^*(P_{\{q_i+1\}}z)=e_{\xi_i}^*(x)\leq\|x\|_{\infty}$
and similarly in the second case
$\overrightarrow{e_{\gamma}}^*(z)=\frac{1}{m_j}\overrightarrow{b}^*_i(P_{(p_i,q_i]}z)=\frac{1}{m_j}b^*_i(x)\leq
\|x\|_{\infty}$. Hence $\|R_{(n,\infty)}z\|\leq \|x\|_{\infty}$ and since $\|x\|_{\infty}=\|R_{[1,n]}z\|$ it follows that $\|z\|=\|x\|_{\infty}$ as promised.
\end{proof}

If by $j_n: X_n\rightarrow \left(\sum_{k=1}^n\oplus\left(X_n\oplus\ell_\infty(\Delta_k)\right)_\infty\right)_\infty$ we denote the natural embedding, we immediately obtain the following.
\begin{corollary} \label{isometryXn}
The map $i_n\circ j_n: X_n\rightarrow \mathcal{Z}$ is an isometric embedding and hence the space $i_n\circ j_n[X_n]$ is isometric to $X_n$ for every $n\in\N$.
\end{corollary}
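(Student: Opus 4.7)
The plan is to obtain the corollary as an immediate consequence of Lemma \ref{isometryZn}. First I would unpack what $j_n$ does: given $x \in X_n$, the natural embedding $j_n(x)$ is the vector in $\left(\sum_{k=1}^n\oplus\left(X_k\oplus\ell_\infty(\Delta_k)\right)_\infty\right)_\infty$ whose $n$-th coordinate is $(x, 0) \in (X_n \oplus \ell_\infty(\Delta_n))_\infty$ and whose other coordinates are zero. Because the outer norm is the sup norm and each coordinate summand carries the sup norm on $X_k \oplus \ell_\infty(\Delta_k)$, one has $\|j_n(x)\| = \max(\|x\|_n, 0) = \|x\|_n$, so $j_n$ is an isometric embedding whose image lies inside the subspace $(X_n \oplus \ell_\infty(\Delta_n))_\infty$ (identified in the obvious way with the $n$-th summand of the direct sum).

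Next I would invoke Lemma \ref{isometryZn}, which asserts precisely that the restriction of $i_n$ to this subspace $(X_n \oplus \ell_\infty(\Delta_n))_\infty$ is an isometry into $\mathcal{Z}$. Composing an isometry with an isometry yields an isometry, so $i_n \circ j_n : X_n \to \mathcal{Z}$ is an isometric embedding, and consequently its image $i_n \circ j_n[X_n]$ is isometric to $X_n$.

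There is no real obstacle here; the only point to verify carefully is the identification of $j_n(X_n)$ with the $X_n$-part of the $n$-th summand so that Lemma \ref{isometryZn} applies directly. Once that identification is made, the corollary is a one-line composition argument, which is why the authors state it as an immediate corollary.
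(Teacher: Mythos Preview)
Your proposal is correct and matches the paper's approach exactly: the paper states the corollary as an immediate consequence of Lemma~\ref{isometryZn} (with no separate proof), and you have simply spelled out the obvious composition argument that the authors left implicit.
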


Let $\pi_n:\left(\sum_{k=1}^n\oplus(X_n\oplus\ell_{\infty}(\Delta_n))_{\infty}\right)_\infty\rightarrow X_n$ denote the natural restriction onto the coordinate $X_n$.

%and define $I_n:\mathcal{Z}\to \mathcal{Z}$ as $I_n=i_n\circ\pi_n\circ R_{\{n\}}$ where $R_n=R_{\{n\}}:\mathcal{Z}\to (X_n\oplus\ell_{\infty}(\Delta_n))_{\infty}$. Note that $I_n$ is a projection, the image of $I_n$ is $i_n[X_n]$ and $Ker(I_n)=i_n[\ell_{\infty}(\Delta_n)]\oplus\sum_{k\neq n}\oplus Z_k$.
%For $n\in\N$ we denote by $\Gamma(\mathcal{Z})$ the union of the sets $\Delta_n$ that participate to the construction of $\mathcal{Z}$. We also denote by $\Gamma_n(\mathcal{Z})$ the union $\cup_{i=1}^n\Delta_i$. Finally, we set $\Gamma_n^0=\cup_{i=1}^n\Delta_n^0$ and $\Gamma_n^1=\cup_{i=1}^n\Delta_n^1$. We observe the following.

\begin{lemma} For every $n\inn$, the map $I_n:\mathcal{Z}\rightarrow\mathcal{Z}$ with $I_n = i_n\circ j_n \circ \pi_n\circ R_{[1,n]}$ is a norm-one projection onto $i_n\circ j_n [X_n]$ with $\ker I_n = i_n[\ell_{\infty}(\Delta_n)]\oplus\sum_{k\neq n}\oplus
Z_k$ (where $\ell_\infty(\Delta_n)$ is viewed as a subspace of $\left(\sum_{k=1}^n\oplus\left(X_k\oplus\ell_\infty(\Delta_k)\right)_\infty\right)_\infty$ in the canonical way).\label{constant projectionIn}
\end{lemma}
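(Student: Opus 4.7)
The plan is to verify the three claims of the lemma—projection, image, kernel—by unwinding the definition $I_n = i_n\circ j_n\circ \pi_n\circ R_{[1,n]}$ and using the properties of each factor already established in this subsection.

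For the projection property I would compute $I_n^2 = i_n\circ j_n\circ \pi_n\circ R_{[1,n]}\circ i_n\circ j_n\circ \pi_n\circ R_{[1,n]}$. By Definition \ref{mathcal}(ii)(b)($\iota$), $R_{[1,n]}\circ i_n$ is the identity on $\left(\sum_{k=1}^n\oplus(X_k\oplus\ell_\infty(\Delta_k))_\infty\right)_\infty$, so the middle block $R_{[1,n]}\circ i_n$ collapses. What remains is $\pi_n\circ j_n$, which by construction of $j_n$ as the natural embedding into the $X_n$-coordinate is the identity on $X_n$. Hence $I_n^2 = i_n\circ j_n\circ \pi_n\circ R_{[1,n]} = I_n$. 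The image equals $i_n\circ j_n[X_n]$ directly since $I_n = (i_n\circ j_n)\circ(\pi_n\circ R_{[1,n]})$, with the range of the last applied map being exactly $i_n\circ j_n[X_n]$, and conversely every $z=i_n\circ j_n(x)$ satisfies $I_n z = z$ by the same collapsing argument.

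For the norm, I would observe that each factor is a contraction: $R_{[1,n]}$ is the restriction of an $\ell_\infty$-sum and hence has norm $1$ on $\mathcal{Z}\subset\mathcal{Z}_\infty$; $\pi_n$ is the coordinate projection on the $\ell_\infty$-sum and has norm $1$; and $i_n\circ j_n$ is an isometric embedding by Corollary \ref{isometryXn}. Therefore $\|I_n\|\leq 1$, and since $I_n$ fixes every nonzero element of $i_n\circ j_n[X_n]$, the norm is exactly $1$.

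For the kernel, using that $i_n\circ j_n$ is an isometric (hence injective) embedding, $I_n z = 0$ is equivalent to $\pi_n(R_{[1,n]}z)=0$, i.e. to the $X_n$-component of $z$ (viewed as a sequence in $\mathcal{Z}_\infty$) being zero. Using the Schauder decomposition $(Z_k)_{k\in\N}$ from Proposition \ref{dualZAH}(i), write $z=\sum_k P_k z$ with $P_k z\in Z_k$; under the identification $Z_n = i_n[(X_n\oplus\ell_\infty(\Delta_n))_\infty]$ supplied by Lemma \ref{isometryZn}, the $X_n$-coordinate of $z$ vanishes if and only if $P_n z\in i_n[\ell_\infty(\Delta_n)]$, while the components $P_k z$ for $k\neq n$ are unrestricted. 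This gives the claimed decomposition $\ker I_n = i_n[\ell_\infty(\Delta_n)]\oplus\sum_{k\neq n}\oplus Z_k$. The only mild subtlety, and the place where I expect to spend the most care, is keeping the identifications between $X_n$, $\ell_\infty(\Delta_n)$ and their images in $\mathcal{Z}_\infty$ versus $\mathcal{Z}$ straight; once the isometric embedding of $Z_n$ coming from Lemma \ref{isometryZn} is invoked, the kernel description is immediate.
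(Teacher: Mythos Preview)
Your proposal is correct and follows essentially the same approach as the paper. The paper's proof is in fact much terser: it simply asserts that the projection, image, and kernel claims ``follow from the definition'' and then verifies $\|I_n\|=1$ via exactly the chain of inequalities you give (using Corollary~\ref{isometryXn} for the isometric embedding and the fact that $R_{[1,n]}$ and $j_n\circ\pi_n$ are contractions). Your write-up unpacks the ``follows from the definition'' part explicitly, which is a welcome addition rather than a departure.
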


\begin{proof}
The fact that $I_n$ is a projection, with the image and kernel mentioned above, follows from its definition. To see that $\|I_n\| = 1$ let $z\in\mathcal{Z}$ and $\overrightarrow{x}=R_{[1,n]}(z)$. Note that $I_n(z)=i_n\circ j_n\circ \pi_n(\overrightarrow{x})$ and by Corollary \ref{isometryXn} we have that $\|I_n(z)\|\leq \|j_n\circ\pi_n(\overrightarrow{x})\| \leq \|\overrightarrow{x}\| \leq \|z\|$ and hence $\|I_n\|\ = 1$.
\end{proof}

\begin{remark}\label{difference compact}
Note that for every $n\inn$ the difference $P_{n} - I_n$ is a finite rank operator. More precisely, $(P_{n} - I_n)(\mathcal{Z}) = i_n[\ell_{\infty}(\Delta_n)]$ (where $\ell_\infty(\Delta_n)$ is viewed as a subspace of $\left(\sum_{k=1}^n\oplus\left(X_k\oplus\ell_\infty(\Delta_k)\right)_\infty\right)_\infty$ in the canonical way).
\end{remark}

\begin{remark}\label{sums of ops}
For $n\in\N$ and a bounded linear $T_n:X_n\rightarrow X_n$ we may identify $T_n$ with the map $\tilde{T}_n = i_n\circ j_n\circ T_n\circ \pi_n\circ R_{[1,n]}$ which is defined on $\mathcal{Z}$. It follows that $\|T_n\| = \|\tilde{T}_n\|$. Also, for $n\in\N$ and bounded linear operators $T_k:X_k\rightarrow X_k$, $k=1,\ldots,n$ we may define the operator $\sum_{k=1}^n\oplus T_k = \sum_{k=1}^n\tilde{T}_k$. Proposition \ref{all the money} below provides a relation of the norm of $\sum_{k=1}^n\oplus T_k$, in the Calkin algebra of $\mathcal{Z}$, to the norms of the $T_k$, $k=1,\ldots,n$.
\end{remark}

\subsection{AH(L)-$\mathcal{L}_{\infty}$ sums of separable Banach spaces for $L$ an infinite subset of the natural numbers }

The construction of $\mathfrak{X}_{AH}$ is based on a sequence of parameters $(m_j,n_j)_{j\in\N}$ satisfying certain lacunarity conditions which are preserved under taking a subsequence $(m_j,n_j)_{j\in L}$, where $L$ is an infinite subset of $\N$. Hence for every such $L$ the space $\mathfrak{X}_{AH(L)}$ can been defined using as parameters the sequence $(m_j,n_j)_{j\in L}$.

In a similar manner, such a sequence $(m_j,n_j)_j$ is used for constructing the Argyros-Haydon  sum of a sequence of separable Banach spaces $(X_n)_n$. Using an infinite subset of the natural numbers $L$ and as parameters the sequence $(m_j,n_j)_{j\in L}$ we define the space $(\sum_{n=1}^{\infty}\oplus X_n)_{AH(L)}$. Note also that in \eqref{tadaaah} the constant $1/m_1$ is now replaced with $1/m_{\min L}$.

The statements from the following remark follow from the corresponding proofs in \cite[Proposition 3.2, Theorems 3.4 and 3.5, Proposition 5.11]{AH}.
\begin{remark}\label{constantAH}
Let $L$ be an infinite subset of the natural numbers. If $\e = 2/(m_{\min L}-2)$, then the following more precise estimations are satisfied for the space $\mathfrak{X}_{AH(L)}$:
\begin{enumerate}

\item[(i)] The extension operators $i_n$ have norm at most $1+\e$.

\item[(ii)] The space $\mathfrak{X}_{AH(L)}$ is a $\mathcal{L}_{\infty,1+\e}$-space.

\item[(iii)] The dual of $\mathfrak{X}_{AH(L)}$ is $(1+\e)$-isomorphic to $\ell_1$.

\end{enumerate}

\end{remark}
We recall the following result from \cite{AH} that is needed for the sequel.

\begin{proposition}\label{upper}
Let $L$ be an infinite subset of $\N$ and $(z_k)_k$ be a bounded block sequence in $X_{AH(L)}$. Then there exists an infinite subset $\tilde{L}$ of $L$ such that for every $j\in\tilde{L}$ there exists a subsequence $(z_{k_i})_i$ of $(z_k)_k$ satisfying \[\left\|\sum_{i=1}^{n_j}z_{k_i}\right\|\geq\frac{1}{2m_j}\sum_{i=1}^{n_j}\|z_{k_i}\|.\]
\end{proposition}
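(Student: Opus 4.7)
The plan is to exhibit, for each sufficiently large $j\in L$, an element $\gamma\in\Gamma$ of weight $m_j$ whose associated functional $e_\gamma^*$ realizes the required lower bound when evaluated on an appropriate $n_j$-subsequence of $(z_k)_k$. The key input is the combinatorial flexibility of the analyses of elements $\gamma\in\Delta^1$ together with the duality $X_{AH(L)}^*\simeq\ell_1(\Gamma)$.

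First, since $X_{AH(L)}^*$ is $(1+\e)$-isomorphic to $\ell_1(\Gamma)$, for every $k$ I can choose $b_k^*\in\bigl(\sum_n\oplus\ell_1(\Delta_n)\bigr)_1$ with $\|b_k^*\|\leq 1$ and $b_k^*(z_k)\geq\frac{3}{4}\|z_k\|$. Using the block condition $\max\ran z_k<\min\ran z_{k+1}$, I truncate the $e_\gamma^*$-expansion of $b_k^*$ to the coordinates whose rank lies in $\ran z_k$; the truncated functional still norms $z_k$ up to a factor which can be absorbed, so I may assume $b_k^*\in\bigl(\sum_{n\in\ran z_k}\oplus\ell_1(\Delta_n)\bigr)_1$.

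Next, the AH-construction of $\Gamma$ involves an injective coding function $\sigma$ mapping finite tuples of elements of $\Gamma$ and rational functionals to $\N$, and only tuples in the range of $\sigma$ can appear as the analysis $\{p_i,q_i,\xi_i,\overrightarrow{b}_i^*\}_{i=1}^{a}$ of an element of $\Delta^1$. By a standard diagonalization (replacing each $b_k^*$ by a suitable rational approximation and then thinning the sequence), I extract an infinite $\tilde{L}\subset L$ and a subsequence of $(z_k)_k$ so that for every $j\in\tilde{L}$ there are indices $k_1<\cdots<k_{n_j}$ and some $\gamma\in\Delta^1$ with $w(\gamma)=m_j$ whose analysis has $\overrightarrow{b}_i^*=b_{k_i}^*$, $(p_i,q_i]\supseteq\ran z_{k_i}$, and each $\xi_i\in\Delta_{q_i+1}^1$ chosen so that $q_i+1$ falls in a gap between consecutive ranges, so that $e_{\xi_i}^*(P_{\{q_i+1\}}z_{k_\ell})=0$ for all $\ell$.

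By property (c) of the analysis of $\gamma$, together with the vanishing of the $\xi_i$-contribution,
\begin{equation*}
e_\gamma^*\Bigl(\sum_{i=1}^{n_j}z_{k_i}\Bigr)=\frac{1}{m_j}\sum_{i=1}^{n_j}b_{k_i}^*(z_{k_i})\geq\frac{3/4}{m_j}\sum_{i=1}^{n_j}\|z_{k_i}\|.
\end{equation*}
Since $X_{AH(L)}$ sits inside $\ell_\infty(\Gamma)$ and $\|e_\gamma^*\|_{\ell_\infty(\Gamma)^*}=1$, its restriction to the space has norm at most $1$, yielding $\|\sum_{i=1}^{n_j}z_{k_i}\|\geq\frac{1}{2m_j}\sum_{i=1}^{n_j}\|z_{k_i}\|$ (the factor $1/2$ accommodating the rational approximation error and the $3/4$ above). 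The main obstacle is the coding step: matching the rational approximations of the norming functionals $b_k^*$ to admissible $\sigma$-coded analyses simultaneously for all $j\in\tilde{L}$. This is the standard subsequence/diagonal procedure of \cite{AH}, and once it is in place the lower estimate follows mechanically from the analysis formula.
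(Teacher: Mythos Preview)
The paper does not give its own proof of this proposition; it is quoted from \cite{AH}. Your overall strategy---choose norming functionals $b_k^*$ for the $z_k$, then for each admissible $j$ build a $\gamma$ of weight $m_j$ whose analysis is $\{p_i,q_i,\xi_i,b_{k_i}^*\}_i$ and evaluate $e_\gamma^*$ on the sum---is exactly the argument used in \cite{AH}. However, your account of the mechanism contains a genuine misconception.

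The coding function $\sigma$ does \emph{not} govern all analyses; quite the opposite. In the Argyros--Haydon construction the odd-indexed weights $m_{2j-1}$ are the ones constrained by $\sigma$ (this is what forces dependent sequences to behave rigidly), whereas for the even-indexed weights $m_{2j}$ the set $\Delta_{n+1}$ contains an element $(n+1,\xi,m_{2j},b^*)$ for \emph{every} admissible pair $(\xi,b^*)$ with $b^*$ in the rational net $B_{p,n}$. So there is no ``matching the $b_k^*$ to $\sigma$-coded analyses'' step, and no diagonalization of the kind you describe: one simply takes $\tilde L$ to consist of the unrestricted (even-type) indices in $L$, approximates each $b_k^*$ once and for all by a rational functional in the appropriate $B_{p,n}$, and then for any $j\in\tilde L$ builds $\gamma$ recursively, choosing the intervals $(p_i,q_i]$ to contain $\ran z_{k_i}$ and the nodes $\xi_i$ in the gaps. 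Your invocation of $\sigma$ would, if taken literally, make the argument fail, since a given tuple of functionals need not be $\sigma$-coded.

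A smaller point: your truncation of $b_k^*$ to coordinates with rank in $\ran z_k$ is too aggressive. Coordinates $e_\gamma^*$ with $\mathrm{rank}(\gamma)>\max\ran z_k$ can contribute nontrivially to $b_k^*(z_k)$ (the extension operators $i_n$ push mass upward). The correct move is to drop only the coordinates with rank $<\min\ran z_k$ (these genuinely vanish on $z_k$) and then pass to a finitely supported rational approximation; one is free to take $q_i$ as large as needed so that this approximation lies in $B_{p_i,q_i}$.
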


Just as above, the statements from the remark below also follow from the corresponding proofs  in \cite[Lemma 2.6, Propositions 3.1 and 5.1, Corollary 5.15]{Z}.
\begin{remark}\label{constant}
Let $L$ be an infinite subset of $\N$ and $(X_n)_n$ be a sequence of separable Banach spaces. If $\e = 2/(m_{\min L}-2)$, then the following more precise estimations are satisfied for the space $(\sum_{n=1}^{\infty}\oplus X_n)_{AH(L)}$:
\begin{enumerate}

\item[(i)] The extension operators $i_n$ have have norm at most $1+\e$.

\item[(ii)] The space $(\sum_{n=1}^{\infty}\oplus X_n)_{AH(L)}$ is a BD-$\mathcal{L}_{\infty,1+\e}$ sum of $(X_n)_n$.

\item[(iii)] The dual of $(\sum_{n=1}^{\infty}\oplus X_n)_{AH(L)}$ is $(1+\e)$-isomorphic to the space $$\left(\sum_{n=1}^{\infty}\oplus \left(X_n^*\oplus\ell_1\left(\Delta_n\right)\right)_1\right)_1$$ where $\Delta_n$ are finite sets.

\end{enumerate}
\end{remark}

The next result is proved in \cite[Proposition 6.3]{Z}, where the constant $1/m_{\min L}$ results from the adjustment stated in Remark \ref{not exaaactly the same but still pretty close}.
\begin{proposition}\label{upper AH sum}
Let $(X_n, \|\cdot\|_n)_{n\in\N}$ be a sequence of separable Banach
spaces, $L$ be an infinite subset of $\N$, $\mathcal{Z}=(\sum_{n=1}^{\infty}\oplus
X_n)_{AH(L)}$ and $(z_k)_k$ be a bounded block sequence in $\mathcal{Z}$. Then there exists an infinite subset $\tilde{L}$ of $L$ such that for every $j\in \tilde{L}$ there exists a subsequence $(z_{k_i})_{i\in\N}$ of $(z_k)_k$ satisfying
\[\left\|\sum_{i=1}^{n_j}z_{k_i}\right\|\geq\frac{1}{m_{\min L}}\cdot\frac{1}{2m_j}\sum_{k=1}^{n_j}\|z_{k_i}\|.\]
\end{proposition}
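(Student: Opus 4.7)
My plan is to adapt the proof of Proposition~\ref{upper} (the pure AH case for $\mathfrak{X}_{AH(L)}$) to the AH-sum setting; the new ingredient, responsible for the extra factor $1/m_{\min L}$ in the lower bound, will be the $\Delta^0$ construction, used to convert norming functionals carrying $X_n^*$-content into $\overrightarrow{e}_\eta^*$-form at the cost of a factor $1/m_{\min L}$, so that they can be combined via the usual $\Delta^1$ recipe. First I would normalise $\|z_k\|=1$ and fix $\e>0$ small. By Remark~\ref{constant}(iii), for each $k$ I choose an approximately norming functional $\overrightarrow{f}_k=\overrightarrow{g}_k+\overrightarrow{b}_k^*$ with $\overrightarrow{g}_k\in(\sum_n X_n^*)_1$, $\overrightarrow{b}_k^*\in(\sum_n\ell_1(\Delta_n))_1$, $\|\overrightarrow{g}_k\|+\|\overrightarrow{b}_k^*\|\le 1$, and $\overrightarrow{f}_k(z_k)\ge 1-\e$; using the block structure I truncate $\overrightarrow{g}_k$ to be supported on $\ran z_k$. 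A pigeonhole then produces a subsequence on which either (A) $\overrightarrow{b}_k^*(z_k)\ge(1-\e)/2$ or (B) $\overrightarrow{g}_k(z_k)\ge(1-\e)/2$ holds for every $k$.

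Next I produce for each $k$ a single $\gamma_k\in\Gamma$ such that $|\overrightarrow{e}_{\gamma_k}^*(z_k)|\ge(1-\e)/(2m_{\min L})$ and $rank(\gamma_k)\le n_k+1$, where $n_k=\max\ran z_k$. In Case~(A), a second pigeonhole inside the $\ell_1$-decomposition $\overrightarrow{b}_k^*=\sum_\gamma c_\gamma\overrightarrow{e}_\gamma^*$ yields $\gamma_k\in\Gamma$ with $|\overrightarrow{e}_{\gamma_k}^*(z_k)|\ge(1-\e)/2$, which is stronger than needed. In Case~(B), I apply the $\Delta^0_{n_k+1}$-construction~\eqref{tadaaah} (in the form with $1/m_{\min L}$ indicated in Remark~\ref{not exaaactly the same but still pretty close}) to pick $\gamma_k\in\Delta_{n_k+1}^0$ with $\overrightarrow{e}_{\gamma_k}^*=\overrightarrow{e}_{\gamma_k}^*\circ P_{n_k+1}+\tfrac{1}{m_{\min L}}\overrightarrow{g}_k$; since $P_{n_k+1}z_k=0$ this gives $\overrightarrow{e}_{\gamma_k}^*(z_k)=\tfrac{1}{m_{\min L}}\overrightarrow{g}_k(z_k)\ge(1-\e)/(2m_{\min L})$.

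The combining step mirrors that of Proposition~\ref{upper}. I thin $(z_k)$ so that $n_k+3<\min\ran z_{k+1}$, and extract $\tilde L\subset L$ such that for every $j\in\tilde L$ the AH recursion admits nodes of weight $m_j$ and age $n_j$ at the required ranks. For $j\in\tilde L$, on the first $n_j$ terms $(z_{k_i})$, set $p_1=0$, $p_i=n_{k_{i-1}}+2$ for $i\ge 2$, $q_i=n_{k_i}+1$, pick (arbitrarily) $\xi_i\in\Delta^1_{q_i+1}$ of weight $m_j$, and set $\overrightarrow{b}_i^*=\overrightarrow{e}_{\gamma_{k_i}}^*$. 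This data satisfies conditions~(a)--(c) of the $\Delta^1$-analysis and yields $\gamma\in\Delta^1$ of weight $m_j$ with
\[
\overrightarrow{e}_\gamma^*=\sum_{i=1}^{n_j}\overrightarrow{e}_{\xi_i}^*\circ P_{\{q_i+1\}}+\tfrac{1}{m_j}\sum_{i=1}^{n_j}\overrightarrow{e}_{\gamma_{k_i}}^*\circ P_{(p_i,q_i]}.
\]
The lacunarity forces $P_{\{q_i+1\}}z_{k_l}=0$ for all $i,l$, so the first sum vanishes on $\sum_l z_{k_l}$, and $\ran z_{k_i}\subset(p_i,q_i]$ together with pairwise disjointness of the intervals $(p_i,q_i]$ gives $P_{(p_i,q_i]}z_{k_l}=\delta_{l,i}z_{k_i}$. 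Hence
\[
\overrightarrow{e}_\gamma^*\Bigl(\sum_i z_{k_i}\Bigr)=\tfrac{1}{m_j}\sum_i\overrightarrow{e}_{\gamma_{k_i}}^*(z_{k_i})\ge\tfrac{1-\e}{2m_{\min L}\,m_j}\sum_i\|z_{k_i}\|,
\]
which combined with $\|\overrightarrow{e}_\gamma^*\|\le 1+\e$ (Remark~\ref{constant}(i)) and letting $\e\to 0$ yields the claimed bound.

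The main obstacle will be the book-keeping around cross-terms. Because elements of $Z_m$ carry non-trivial $\ell_\infty(\Delta_n)$-tails for $n>m$, terms like $\overrightarrow{e}_{\xi_i}^*(P_{\{q_i+1\}}z_{k_l})$ with $l\neq i$ can in principle be nonzero and must be killed by a sufficiently lacunary subsequence; similarly, verifying that the recursive AH construction admits nodes of the specified weight and age at the selected ranks is arranged through the choice of $\tilde L$.
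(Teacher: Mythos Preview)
The paper does not prove this proposition itself; it cites \cite[Proposition~6.3]{Z} and remarks that the extra factor $1/m_{\min L}$ comes from the modification described in Remark~\ref{not exaaactly the same but still pretty close}. Your outline is exactly the argument one expects to find there: use the $\Delta^0$-nodes to convert $X_n^*$-content into an $\overrightarrow{e}_\gamma^*$ at the price of $1/m_{\min L}$, then combine via the standard $\Delta^1$ recipe as in Proposition~\ref{upper}.

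There is one quantitative slip. Your (A)/(B) split costs a factor $1/2$, so after combining you obtain only $(1-\e)/(2m_{\min L}m_j)$, and the closing ``letting $\e\to 0$'' is not legitimate because $\tilde L$ and the subsequences were built from a fixed $\e$. The fix is to bypass the dual isomorphism of Remark~\ref{constant}(iii) altogether: since $\mathcal{Z}$ is a subspace of $\mathcal{Z}_\infty$, the equality $\|z_k\|=1$ is a genuine supremum over coordinates, so for each $k$ there is either a single $\gamma$ with $|\overrightarrow{e}_\gamma^*(z_k)|>1-\e$, or a single $n\le\max\ran z_k$ with $\|x_n^{(k)}\|_{X_n}>1-\e$, with no halving. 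Both alternatives then yield $|\overrightarrow{e}_{\gamma_k}^*(z_k)|\ge(1-\e)/m_{\min L}$, and after the $\Delta^1$ combination one gets $(1-\e)/(m_{\min L}m_j)\ge 1/(2m_{\min L}m_j)$ for any $\e<1/2$. For the same reason $\|\overrightarrow{e}_\gamma^*\|_{\mathcal{Z}^*}\le 1$, not $1+\e$. Note also that in your Case~(A) nothing forces $\mathrm{rank}(\gamma_k)\le n_k+1$; this is harmless, but the thinning must be performed \emph{after} the $\gamma_k$ are selected so that $\mathrm{rank}(\gamma_{k_i})<\min\ran z_{k_{i+1}}$.

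Your closing worry about cross-terms is unfounded: once $n_{k_i}+3<\min\ran z_{k_{i+1}}$, the index $q_i+1=n_{k_i}+2$ lies strictly between $\max\ran z_{k_i}$ and $\min\ran z_{k_{i+1}}$, so $P_{\{q_i+1\}}z_{k_l}=0$ for every $l$, and no further lacunarity is needed there.
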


The following Proposition essentially states that the norm of diagonal operators defined on $\mathcal{Z} = \left(\sum_n\oplus X_n\right)_{AH(L)}$, in the Calkin algebra of $\mathcal{Z}$, is a sufficient approximation of the supremum of the norms of the operator when restricted onto the coordinates $X_n$. This is the only part of this paper where the modification of \eqref{tadaaah} stated in Remark \ref{not exaaactly the same but still pretty close} is needed.

\begin{proposition}\label{all the money}
Let $(X_n\|\cdot\|_n)_{n\in\N}$ be a sequence of separable Banach spaces, $L$ be an infinite subset of $\N$, $\mathcal{Z}=(\sum_{n=1}^{\infty}\oplus X_n)_{AH(L)}$. Let also $n\inn$, $T_k:X_k\rightarrow X_k$ be bounded linear operators for $k=1,\ldots,n$ and $\lambda\in\mathbb{R}$. Define $T:\mathcal{Z}\rightarrow\mathcal{Z}$ with $T = \sum_{k=1}^n\oplus T_k + \lambda P_{(n,+\infty)}$ (see Remark \ref{sums of ops}). Then there exists a compact operator $K:\mathcal{Z}\rightarrow\mathcal{Z}$ so that
\begin{equation*}
\|T - K\| \leq \left(1 + \frac{4}{m_{\min L} - 2}\right)\max\left\{\max_{1\leq k \leq n}\|T_k\|, |\lambda|\right\}.
\end{equation*}
\end{proposition}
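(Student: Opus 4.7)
Set $M = \max\{\max_{1 \leq k \leq n}\|T_k\|,|\lambda|\}$ and $\e = 2/(m_{\min L}-2)$, so that the target estimate reads $\|T-K\|\leq (1+2\e)M$. The plan is to estimate $\|Tz\|_{\mathcal Z}$ coordinate by coordinate in the Bourgain--Delbaen structure of $\mathcal Z$, and choose a finite-rank $K$ to absorb boundary terms where the estimate is not sharp.

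First, observe that $T=\sum_{k=1}^n\tilde T_k+\lambda P_{(n,\infty)}$ is block-diagonal with respect to the Schauder decomposition $(Z_m)$: if $z\in Z_m$, then $Tz\in Z_m$, with $T|_{Z_m}$ acting (under the identification $Z_m\cong(X_m\oplus\ell_\infty(\Delta_m))_\infty$ of Lemma~\ref{isometryZn}) as $T_m\oplus 0$ for $m\leq n$ and as $\lambda\cdot\mathrm{id}$ for $m>n$. Since $\mathcal Z\subset\mathcal Z_\infty$ carries the sup norm, $\|Tz\|_{\mathcal Z}=\sup_m\max\bigl(\|R_{X_m}(Tz)\|_{X_m},\,\sup_{\gamma\in\Delta_m}|\overrightarrow{e}_\gamma^*(Tz)|\bigr)$. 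The $X_m$-parts give at most $M\|z\|$ immediately from the block-diagonal structure; the bulk of the work is the $\gamma$-coordinates.

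I split $Tz=\sum_{l=1}^n\tilde T_l z+\lambda P_{(n,\infty)}z$ and estimate the two contributions to $\overrightarrow{e}_\gamma^*(Tz)$ separately. For the sum part, the central inductive claim is that $|\overrightarrow{e}_\gamma^*(\sum_l\tilde T_l z)|\lesssim M\|z\|/m_{\min L}$ for every $\gamma\in\Gamma$, proved by induction on the rank of $\gamma$. The base step uses the defining formula \eqref{tadaaah}: for $\gamma\in\Delta_k^0$ the $\overrightarrow{e}_\gamma^*\circ P_k$ summand vanishes on $\sum_l\tilde T_l z$ (because the $\ell_\infty(\Delta_k)$-part of each $\tilde T_l z$ at rank $k$ is zero), leaving only the $\frac{1}{m_{\min L}}\overrightarrow{f}$-contribution, which gives exactly the desired factor. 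The inductive step for $\gamma\in\Delta_k^1$ uses the analysis of $\gamma$: the first-sum $\overrightarrow{e}_{\xi_i}^*\circ P_{q_i+1}$ contributions vanish for the same reason, while the $\frac{1}{m_{j_\gamma}}\overrightarrow{b}_i^*\circ P_{(p_i,q_i]}$ contributions are bounded via $\ell_1$--$\ell_\infty$ duality combined with the inductive hypothesis applied to ranks strictly below $k$. For the tail part, the key observation is that $\overrightarrow{e}_{\gamma'}^*\circ P_{(n,\infty)}=0$ whenever $\gamma'\in\Delta_{p_0}$ with $p_0\leq n$ (the functional depends only on coordinates in $[1,p_0]$, as one verifies by unraveling its own analysis), which instantly reduces the case $k\leq n$ to the inductive estimate; for $k>n$ we invoke Lemma~\ref{analyze back} to split $\overrightarrow{e}_\gamma^*\circ P_{[1,n]}$ into the three listed shapes.

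The main obstacle is case (iii) of Lemma~\ref{analyze back}, where the extra term $\overrightarrow{e}_{\gamma'}^*$ with $\gamma'\in\Delta_{p_0}^1$, $p_0\leq n$, appears; this is resolved by applying the same analysis recursively to $\gamma'$ and using the vanishing $\overrightarrow{e}_{\gamma'}^*\circ P_{(n,\infty)}=0$ noted above to reduce $\overrightarrow{e}_{\gamma'}^*(Tz)$ back to $\overrightarrow{e}_{\gamma'}^*(\sum_l\tilde T_l z)$, which is covered by the induction since $\mathrm{rank}(\gamma')<\mathrm{rank}(\gamma)$. The compact $K$ is taken as a finite-rank operator supported on the initial Schauder blocks, correcting the finitely many coordinates where the inductive bound has boundary losses, so that the gap between $(1+2/m_{\min L})M$ produced by the coordinate estimate and the target $(1+4/(m_{\min L}-2))M$ is genuinely available. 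The modification of \eqref{tadaaah} to include the $\frac{1}{m_{\min L}}$ factor (Remark~\ref{not exaaactly the same but still pretty close}) is essential here: without it the $\overrightarrow{f}$-contributions in the $\Delta^0$-case would not be suppressed and the whole induction would fail to close, which is precisely why that modification was introduced.
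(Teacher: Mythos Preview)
Your inductive scheme for bounding $|\overrightarrow{e}_\gamma^*(\sum_l\tilde T_l z)|$ does not close in the $\Delta_k^1$ case. When you expand $\overrightarrow{e}_\gamma^*$ via the analysis $\{p_i,q_i,\xi_i,\overrightarrow{b}_i^*\}_{i=1}^a$ of $\gamma$, the $\overrightarrow{e}_{\xi_i}^*\circ P_{q_i+1}$ terms indeed vanish, but you are left with $\frac{1}{m_j}\sum_{i=1}^a\overrightarrow{b}_i^*\circ P_{(p_i,q_i]}$ applied to $\sum_l\tilde T_l z$. Even granting that each summand is controlled by the inductive hypothesis (which itself is delicate, since $\overrightarrow{e}_\eta^*\circ P_{(p_i,q_i]}$ is not simply $\overrightarrow{e}_\eta^*$), the number of summands is the age $a$, and $a$ may be as large as $n_j\gg m_j$. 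So the bound you obtain is of order $\frac{n_j}{m_j}$ times the inductive constant, not the inductive constant itself; the recursion blows up rather than stabilising at $M\|z\|/m_{\min L}$. No finite-rank correction supported on initial blocks can repair this, because the failure occurs for every $\gamma\in\Delta_k^1$ with $k$ arbitrarily large.

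The paper avoids this entirely by choosing $K$ explicitly before estimating: with $A_n^1=i_n\circ S_n^1\circ R_{[1,n]}$ and $A_n^2=i_n\circ S_n^2\circ R_{[1,n]}$ (the projections onto the $X$-part and the $\ell_\infty(\Delta)$-part of the first $n$ blocks), one sets $K=(P_{[1,n]}-A_n^1)\circ\sum_k\oplus T_k - \lambda A_n^2$, which is finite rank. Then a short computation gives
\[
(T-K)x \;=\; i_n\bigl((T_kx_k-\lambda x_k,\,0)_{k=1}^n\bigr)\;+\;\lambda x.
\]
The point of this rewriting is that the vector $v=i_n((T_kx_k-\lambda x_k,0)_{k=1}^n)$ has \emph{all} its $\ell_\infty(\Delta_m)$-coordinates equal to zero for $m\le n$. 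Hence for $\gamma\in\Delta_q$ with $q>n$ one applies Lemma~\ref{analyze back} once: in case~(iii) the extra term $\overrightarrow{e}_{\gamma'}^*(v)$ with $\gamma'\in\Delta_{p_0}$, $p_0\le n$, is identically zero, so case~(iii) collapses to case~(ii) and the bound $\frac{1}{m_j}\|P_{[p,n]}\|\cdot 2M$ follows directly, with no induction and no accumulation over the age. The $\Delta_q^0$ case is handled by a single application of~\eqref{tadaaah} in the same way. This is where the specific choice of $K$ does the real work; your proposal treats $K$ as an afterthought, but in fact it is the mechanism that kills the recursion you are attempting.
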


\begin{proof}
Consider the operators $$S_n^1, S_n^2:\left(\sum_{k=1}^n\oplus\left(X_k\oplus\ell_\infty(\Delta_k)\right)_\infty\right)_\infty\rightarrow \left(\sum_{k=1}^n\oplus\left(X_k\oplus\ell_\infty(\Delta_k)\right)_\infty\right)_\infty$$ such that if $x = (x_k,z_k)_{k=1}^n$ then $S_n^1x = (x_k,0)_{k=1}^n$ and $S_n^2x = (0,z_k)_{k=1}^n$. Define
$$A_n^1 = i_n\circ S_n^1\circ R_{[1,n]} \quad \mbox{and} \quad A_n^2 = i_n\circ S_n^2\circ R_{[1,n]}.$$
Observe that $A_n^2$ is a finite rank operator. Also note that $P_n = A_n^1 + A_n^2$ and hence $P_n - A_n^1$ is a finite rank operator as well.

Define $K = (P_n - A_n^1)\circ\sum_{k=1}^n\oplus T_k - \lambda A_n^2$ which is a finite rank operator. Then
\begin{equation}\label{i label this}
T-K = A_n^1\circ\sum_{k=1}^n\oplus T_k + \lambda\left(P_{(n,+\infty)}+A_n^2\right).
\end{equation}
We shall show that $\|T - K\| \leq (1+\delta)\max\left\{\max\|T_k\|, |\lambda|\right\}$, where $\delta = 4/(m_{\min L} - 2)$. Let $x$ be an element of $\mathcal{Z}$ with $\|x\| = 1$ and consider $x = (x_k,z_k)_{k=1}^\infty$ as a vector in $\mathcal{Z}_\infty = \left(\sum_{k=1}^\infty\left(X_k\oplus\ell_\infty(\Delta_k)\right)_\infty\right)_\infty.$ Observe that
\begin{equation}\label{i label that}
\left(A_n^1\circ\sum_{k=1}^n\oplus T_k\right)x = i_n\left((T_kx_k,0)_{k=1}^n\right) \quad\mbox{and}\quad \lambda A_n^2x = i_n\left((0,\lambda z_k)_{k=1}^n\right)
\end{equation}
Set $(T-K)x = y$, we shall prove that $\|y\| \leq (1 + \delta)\max\left\{\max\|T_k\|, |\lambda|\right\}.$  By \eqref{i label this}, \eqref{i label that} and $P_n = i_n\circ R_n$ we conclude that
\begin{equation}\label{i label those}
y = i_n\left((T_kx_k,\lambda z_k)_{k=1}^n\right) + \lambda(x - i_n\circ R_nx)
\end{equation}
Write $y = (y_k,w_k)_{k=1}^\infty$ as a vector in $\mathcal{Z}_\infty$. Note that $R_n(x - i_n\circ R_nx) = 0$, therefore by \eqref{i label those} we obtain that $R_ny = R_n\circ i_n\left((T_kx_k,\lambda z_k)_{k=1}^n\right) = (T_kx_k,\lambda z_k)_{k=1}^n$ and hence
\begin{equation*}\label{i label thee}
\left\|R_ny\right\| \leq \max\left\{\max\|T_k\|,|\lambda|\right\}.
\end{equation*}
Also, by \eqref{i label those} and Definition \ref{mathcal}(ii)(b)($\iota$) we obtain that for $k>n$ we have that $y_k = \lambda x_k$. All that remains to be shown is that for $q>n$, $\|w_q\| \leq (1 + \delta)\max\left\{\max\|T_k\|, |\lambda|\right\}.$ Fix $q>n$, it suffices to show that $|\overrightarrow{e}_\gamma(y)| \leq (1 + \delta)\max\left\{\max\|T_k\|, |\lambda|\right\}$ for all $\gamma\in\Delta_q$. To that end, let $\gamma\in \Delta_q$.

%{\color{blue} Recall that $R_ny = (T_kx_k,\lambda z_k)_{k=1}^n$ and also that that for $k>n$, $y_k = \lambda x_k$. It follows that $|\overrightarrow{e}_\gamma^*(y)| \leq \max\{\max\|T_k\|, |\lambda|\}$.}

Using \eqref{i label those} rewrite $y$ as $$y = i_n\left((T_kx_k - \lambda x_k,0)_{k=1}^n\right) + \la x.$$ We conclude that
\begin{eqnarray}
|\overrightarrow{e}_\gamma(y)| &=& \left| \lambda \overrightarrow{e}_\gamma(x) + \overrightarrow{e}_\gamma\left(i_n\left((T_kx_k - \lambda x_k,0)_{k=1}^n\right)\right) \right|\nonumber\\
&\leq& |\lambda| + \left| \overrightarrow{e}_\gamma\left(i_n\left((T_kx_k - \lambda x_k,0)_{k=1}^n\right)\right) \right| \label{i label this upon thee}
\end{eqnarray}
We distinguish two cases concerning $\gamma$. We first study the case in which $\gamma\in\Delta_q^0$. In this case, there is $\overrightarrow{f}\in\left(\sum_{k=1}^{q-1}\oplus X_k^*\right)_1$ with $\|\overrightarrow{f}\|\leq 1$, so that $\overrightarrow{e}_{\gamma}^* = \overrightarrow{e}_{\gamma}^*\circ P_{\{q\}} +  \frac{1}{m_{\min L}}\overrightarrow{f}$ (recall that in the construction of $(\sum_k\oplus X_k)_{AH(L)}$, in \eqref{tadaaah} the constant $1/m_1$ is replaced with $1/m_{\min L}$). It follows that
\begin{eqnarray}
\left| \overrightarrow{e}_\gamma\left(i_n\left((T_kx_k - \lambda x_k,0)_{k=1}^n\right)\right) \right| &=& \frac{1}{m_{\min L}}\left|\overrightarrow{f}\left(i_n\left((T_kx_k - \lambda x_k,0)_{k=1}^n\right)\right)\right|\nonumber\\
&\leq& \frac{1}{m_{\min L}} \left(|\lambda| + \max\|T_k\|\right)\nonumber\\
&\leq& \frac{2}{m_{\min L}} \max\{\max\|T_k\|, |\lambda|\}\label{le grand bleu}
\end{eqnarray}
Combining \eqref{i label this upon thee} with \eqref{le grand bleu} we obtain the desired bound.

Now assume that $\gamma\in\Delta_q^1$. Applying Lemma \ref{analyze back} we obtain that either $\overrightarrow{e}_\gamma\left(i_n\left((T_kx_k - \lambda x_k,0)_{k=1}^n\right)\right) = 0$ or that there is $j\in L$, $1\leq p\leq n$ and $\overrightarrow{b}^*\in\left(\sum_{k=1}^n\oplus\ell_1(\Delta_k)\right)_1$ with $\|\overrightarrow{b}^*\|\leq 1$ so that  $$\overrightarrow{e}_\gamma\left(i_n\left((T_kx_k - \lambda x_k,0)_{k=1}^n\right)\right) = \frac{1}{m_j}\overrightarrow{b}^*\circ P_{[p,n]}\circ i_n\left((T_kx_k - \lambda x_k, 0)_{k=1}^n\right).$$ and therefore we obtain
\begin{eqnarray}
\left|\overrightarrow{e}_\gamma\left(i_n\left((T_kx_k - \lambda x_k,0)_{k=1}^n\right)\right)\right| &\leq& \frac{1}{m_j}\left\|P_{[p,n]}\right\|\left(|\lambda| + \max\|T_k\|\right)\nonumber\\
&\leq&\frac{1}{m_{\min L}}\frac{2m_{\min L}}{m_{\min L} - 2}\left(|\lambda| + \max\|T_k\|\right)\nonumber\\
 &=& \frac{2}{m_{\min L}-2} \left(|\lambda| + \max\|T_k\|\right)\nonumber\\
 &\leq & \frac{4}{m_{\min L}-2} \max\{\max\|T_k\|, |\lambda|\}\label{oh mon dieu}.
\end{eqnarray}
Finally, combining \eqref{i label this upon thee} and \eqref{oh mon dieu} we conclude that
$$|\overrightarrow{e}_\gamma(y)| \leq \left(1 + \frac{4}{m_{\min L}-2}\right)\max\left\{\max_{1\leq k\leq n}\|T_k\|, |\lambda|\right\}$$
which completes the proof.
\end{proof}

%\begin{lemma}\label{cofinite}
%Let $(X_n\|\cdot\|_n)_{n\in\N}$ be a sequence of separable Banach spaces, $L\subset\N$, $\mathcal{Z}=(\sum_{n=1}^{\infty}\oplus X_n)_{AH(L)}$ and $\frac{2}{m_{\min L}}\leq\e\leq 1$. Then for every $n_0\in\N$ there exist cofinite dimensional subspaces $Y_1, Y_2, \ldots, Y_{n_0}$ of $X_1, X_2,\ldots, X_{n_0}$ respectively such that $\|i_1(x_1)+i_2(x_2)+\ldots+i_{n_0}(x_{n_0})\|\leq (1+\e)\max_{1\leq i\leq n_0}\|x_i\|$ for every $x_i\in Y_i$, $i=1,\ldots,n_0$.
%\end{lemma}

%\begin{proof}
%We fix $n_0\in\N$ and we set $Y_k=\cap\{Ker f:f\in (\cup_{i=1}^{n_0}H_i)\cap F_k\}$ for every $1\leq k\leq n_0$, where $H_i$ is a finite dimensional subspace of $(\sum_{k=1}^{i-1}\oplus\overline{\langle F_k \rangle})_1$ isometric with $\ell_1(\Delta_i^0)$. It is clear that each $Y_k$ is a cofinite dimensional subspace of $X_k$ and moreover observe that for every $1\leq k\leq n_0$, $e_{\gamma}^*(i_k(x))=0$ for every $e_{\gamma}^*\in\ell_1(\Gamma_{n_0}^0)$ and $x\in X_k$. Let $x_1,\ldots,x_{n_0}$ with $x_i\in Y_i$ for every $i=1,\ldots,n_0$. Since $\pi_k^c(x_k)=0$, using similar arguments as in Corollaty \ref{isometryXn}, we conclude that $e_{\gamma}^*(i_k(x_k))=0$ for every $\gamma\in\Gamma_{n_0}^1$. It follows that $R_{[1,n_0]}i_k(x_k)=x_k$ for every $1\leq k\leq n_0$. Hence \[i_1(x_1)+i_2(x_2)+\ldots+i_{n_0}(x_{n_0})=i_{n_0}(x_1,x_2,\ldots,x_{n_0})\] where the element $(x_1,x_2,\ldots,x_{n_0})$ is considered naturally in $(\sum_{k=1}^{n_0}\oplus X_k)_{\infty}$. By Remark \ref{constant} the result follows.
%\end{proof}

\subsection{Well founded trees}

A tree is an ordered space $(\T, \leq)$ such that for every
$t\in \T$ the set $\{s\in\T: s\leq t\}$ is well ordered. An
element $t\in\T$ will be called a node of $\T$ and a minimal node
will be called a root. A subset $\mathcal{S}$ of $\mathcal{T}$ is
called a downwards closed subtree of $\T$ if for every $t\in\mathcal{S}$, the set
$\{s\in\T: s\leq t\}$ is a subset of $\mathcal{S}$. For every node
$t$ we denote by $\scc(t)$ the set of the immediate successors of
$t$ while by $\T_t$ we denote the set $\{s\in\T: t\leq s\}$. Note
that the set $\T_t$ with the induced ordering is also a tree. If
$t$ is a node of $\T$ and $s\in\scc(t)$, then $t$ is called the
immediate predecessor of $s$ and is denoted by $s^-$. For a node
$t$ we define the height of $t$, denoted by $|t|$, to be the order
type of the well ordered set $\{s\in\T: s\leq t\}$. In particular,
if $t$ is a root of $\T$, then $|t| = 0$. A tree is called well
founded if it does not contain any infinite chains. Note that in
this case $|t|$ is finite for every $t\in\T$.

\begin{remark}
From now on, unless stated otherwise, every tree $\T$ will be
assumed to be well founded, have a unique root, denoted by
$\emptyset_{\T}$, and for every non-maximal node $t$ the set
$\scc(t)$ will be assumed to be infinitely countable. Note that
such a tree is either infinitely countable or a
singleton.\label{remark tree type}
\end{remark}

A tree $\T$ is equiped with the compact Hausdorff topology having
the sets $\T_t$, $t\in\T$ as a subbase. Then a
node is an isolated point if and only if the set $\scc(t)$ is
finite, which (following the convention from Remark \ref{remark tree type}) is the case if and only if $t$ is a maximal node.
Moreover, the set of all maximal nodes of $\T$ is dense in $\T$.

For a tree $\T$ the derivative $\T^\prime$ of $\T$ is defined to
be the downwards closed subtree of all non-maximal nodes of $\T$. For an ordinal
number $\alpha$ the derivative of order $\alpha$  of $\T$, which
is also a downwards closed subtree of $\T$, is defined recursively. For $\alpha =
0$ set $\T^0 = \T$. Assuming that for an ordinal number $\alpha$
the derivatives $\T^\beta$ have been defined for all $\beta <
\alpha$, define $\T^\alpha = \cap_{\beta<\alpha}\T^\beta$ if
$\alpha$ is a limit ordinal number and $\T^\alpha =
(\T^\beta)^\prime$ if $\alpha$ is a successor ordinal number with
$\alpha = \beta + 1$. The rank of the tree $\T$, denoted by
$\rho(T)$, is defined to be the smallest ordinal number $\alpha$
satisfying $\T^\alpha = \T^{\alpha + 1} = \varnothing$. Note that
$\rho(\T)$ is a countable ordinal number.

For a node $t$ the rank of $t$, with respect to the tree $\T$,
denoted by $\rho_{_{\T}}(t)$, is also defined to be the supremum
of all ordinal number $\alpha$ such that $t\in\T^\alpha$. Observe
that the supremum is actually obtained, i.e. if $\rho_{_{\T}}(t) =
\al$ then $t\in\T^{\al}$. The function $\rho_{_{\T}}$ satisfies
the following properties: $\rho_{_{\T}}(t) = 0$ if and only if $t$
is a maximal node and otherwise $\rho_{_{\T}}(t) =
\sup\{\rho_{_{\T}}(s) + 1:\; t<s\} = \sup\{\rho_{_{\T}}(s) + 1:\;
s\in\scc(t)\}$. Moreover $\rho(\T) = \sup\{\rho_{_{\T}}(t) + 1:\;
t\in\T\} = \rho_{_{\T}}(\emptyset_{\T}) + 1$. Furthermore, if $s$
is a node of $\T$ and we consider the tree $\T_s$ with the induced
ordering, then $\rho_{_{\T}}(t)  = \rho_{_{\T_s}}(t)$ for every
$t\in\T_s$.

\begin{remark}
It is more convenient to use the rank $\rho_{_{\T}}(\es_{\T})$ of
the root of a tree $\T$ instead of the rank $\rho(\T)$ of the tree
itself. Therefore, unless stated otherwise, by rank of a tree $\T$
we shall mean the ordinal number $o(\T) = \rho_{_{\T}}(\es_{\T})$.
Under this notation, we have that $o(\T) = 0$ if and only if $\T$
is a singleton and otherwise $o(\T) = \sup\{ o(\T_s) + 1:\;
s\in\T$ with $s\neq\es_{\T}\}$.\label{remark rank of tree}
\end{remark}

\section{The spaces $X_{(\T,L,\e)}$}\label{the definition}

For every tree $\T$, infinite subset of the natural numbers $L$
and positive real number $\e$ we define a Banach space
$X_{(\T,L,\e)}$. Let us for now forget the parameter $\e$, which represents the declination of the space's dual from $\ell_1$, and concentrate on the set $L$, which plays a significant role in proving the properties of the space, so let us for the moment denote the spaces in the form $X_{(\T,L)}$.  The purpose of this set $L$ lies in the fact that, as already mentioned, the Argyros-Haydon space is constructed using a sequence of pairs of natural numbers $(m_j,n_j)_j$, called weights and using a subsequence $(m_j,n_j)_{j\in L}$ of these weights one can define the space $X_{AH(L)}$. As it is proved in \cite{AH}, if the intersection of two sets $L,M$ is finite, then every operator from $X_{AH(L)}$ to $X_{AH(M)}$ is compact. Returning now to the recursive definition of the spaces (which we for now denote by $X_{(\T,L)}$), in the basic recursive step the space $X_{(\T,L)}$ is the space $X_{AH(L)}$, while in the general case the space $X_{(\T,L)}$ is the direct sum $(\sum\oplus X_{(\T_n,L_n)})_{AH(L_0)}$, where $(L_n)_{n}$ defines a partition of $L$. This method of construction endows these spaces with the following properties: every operator defined on $X_{(\T,L)}$ is a multiple of the identity plus a horizontally compact operator (see definition \ref{zontirohaly mopact}) and if $\s$ is a tree while $M$ is such that its intersection with $L$ is finite, then every operator $T:X_{(\T,L)}\rightarrow X_{(\s,M)}$ is compact. These facts are the main tools used to prove Theorem \ref{a}  in the introduction.

\begin{definition}
By transfinite recursion on the order $o(\T)$ of a tree $\T$, we
define the spaces $X_{(\T,L,\e)}$ for every $L$ infinite subset of
the natural numbers and $\e$ positive real number.
\begin{enumerate}

\item[(i)]
Let $\T$ be a tree with $o(\T) = 0$ (i.e. $\T =
\{\es_{\T}\}$), $L$ be an infinite subset of the natural numbers
and $\e$ be a positive real number. Let $\de >0$ with  $(1+\de)^2 < 1 + \e$. Choose $L^\prime$ an infinite subset of $L$ such that $4/(\min L^\prime - 2) < \de$,
and define $X_{(\T,L,\e)} = X_{AH(L^\prime)}$.

\item[(ii)]
Let $\T$ be a tree with $0<\al = o(\T)$, $L$ be an infinite
subset of the natural numbers and $\e$ be a positive real number.
Assume that for every tree $\s$ with $o(\s) < \al$, for every
infinite subset of the natural numbers $M$ and positive real
number $\e^\prime$ the space $X_{(\s,M,\e^\prime)}$ has been
defined.  Choose $\{s_n: n\inn\}$ an enumeration of the set
$\scc(\es_{\T})$, $\de >0$ with  $(1+\de)^2 < 1 + \e$, $L^\prime$ an infinite subset of $L$ with
$4/(\min L^\prime - 2) < \de$  and a partition of $L^\prime$ into infinite sets $(L_n)_{n=0}^\infty$. Define $X_{s_n} =
X_{(\T_{s_n},L_n,\de)}$ and
    \begin{equation*}
    X_{(\T,L,\e)} = \left(\sum_{n=1}^{\infty}\oplus X_{s_n} \right)_{AH(L_0)}.
    \end{equation*}

\end{enumerate}\label{def xtle}
\end{definition}

\begin{remark}\label{hi saturated}
Proposition \ref{dualZAH}(ii) and a transfinite induction on $o(\T)$ yield that the space $\X$ is HI-saturated, for all $\T, L, \e$ as in Definition \ref{def xtle}.
\end{remark}

\begin{proposition}
Let $\T, L, \e$ be as in Definition \ref{def xtle}. Then
$X_{(\T,L,\e)}^*$ is $(1+\e)$-isomorphic to $\ell_1$.\label{l1
predual}
\end{proposition}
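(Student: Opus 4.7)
The plan is to proceed by transfinite induction on the order $o(\T)$, mirroring the recursive structure of Definition \ref{def xtle}. At each stage I would keep precise track of the isomorphism constant and verify that the freedom built into the auxiliary parameter $\de$ (chosen so that $(1+\de)^2 < 1+\e$) absorbs both the Argyros--Haydon perturbation appearing at the outer level of the construction and the inductive error inherited from each branch.

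For the base case $o(\T) = 0$, the definition gives $\X = \mathfrak{X}_{AH(L^\prime)}$ with $4/(\min L^\prime - 2) < \de$. Remark \ref{constantAH}(iii) directly provides $\X^* \simeq^{1+\e^\prime} \ell_1$ with $\e^\prime = 2/(m_{\min L^\prime}-2)$. Since $(m_j)_j$ is strictly increasing with $m_1 \geq 4$, one has $m_{\min L^\prime} \geq \min L^\prime$, so $\e^\prime \leq 4/(\min L^\prime - 2) < \de$, and since $(1+\de)^2 < 1+\e$ forces $\de < \e$, the conclusion follows.

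For the inductive step, assume $o(\T) = \al > 0$ and write $\X = (\sum_n \oplus X_{s_n})_{AH(L_0)}$ where $X_{s_n} = X_{(\T_{s_n},L_n,\de)}$ and $o(\T_{s_n}) < \al$. By the inductive hypothesis, $X_{s_n}^* \simeq^{1+\de} \ell_1$ for every $n$. Remark \ref{constant}(iii) identifies $\X^*$, up to a factor of $1+\e^{\prime\prime}$ with $\e^{\prime\prime} = 2/(m_{\min L_0}-2) \leq \de$ (by the same monotonicity argument as above, since $L_0 \subseteq L^\prime$), with the $\ell_1$-sum
\begin{equation*}
\left(\sum_{n=1}^{\infty}\oplus \left(X_{s_n}^* \oplus \ell_1(\Delta_n)\right)_1\right)_1.
\end{equation*}
Since $\ell_1(\Delta_n)$ is isometric to a finite-dimensional $\ell_1$-space and each $X_{s_n}^*$ is $(1+\de)$-isomorphic to $\ell_1$, after rescaling the branchwise isomorphisms so that each has norm $1$ and inverse norm at most $1+\de$, the induced coordinatewise map on the $\ell_1$-sum has the same constants. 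Using that $\ell_1$-sums of copies of $\ell_1$ are isometric to $\ell_1$, this identifies the displayed space as $(1+\de)$-isomorphic to $\ell_1$. Multiplying constants yields $\X^* \simeq^{(1+\e^{\prime\prime})(1+\de)} \ell_1$, and the choice of $\de$ ensures $(1+\e^{\prime\prime})(1+\de) \leq (1+\de)^2 < 1+\e$.

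The only real obstacle is the bookkeeping that ensures $\e^{\prime\prime} \leq \de$ at both the base and inductive steps; this is exactly where the inequality $4/(\min L^\prime - 2) < \de$ imposed in Definition \ref{def xtle} is used, together with the monotonicity $m_j \geq j$ of the AH-parameters so that $m_{\min L_0} \geq \min L_0 \geq \min L^\prime$. Everything else is a routine propagation of constants through the transfinite induction.
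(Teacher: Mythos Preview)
Your proof is correct and follows essentially the same transfinite induction as the paper's, invoking Remark~\ref{constantAH} for the base case and Remark~\ref{constant} together with the inductive hypothesis for the step. You are in fact more careful than the paper in making explicit the inequality $m_{\min L_0} \geq \min L_0 \geq \min L'$ (from $m_j \geq j$) that justifies $\e'' \leq \de$, a point the paper's proof leaves implicit.
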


\begin{proof}
We use transfinite induction on the order $o(\T)$ of a tree $\T$. In the case that $o(\T)=0$ the result follows by Remark \ref{constantAH}. We now assume that $o(\T)=\alpha>0$ and that for every tree $\s$ with $o(\s)<\alpha$, every infinite subset $M$ of $\N$ and every $\e^\prime>0$ the dual space $X_{(\s,M,\e^\prime)}^*$ is $(1+\e^\prime)$-isomorphic to $\ell_1$. Let $\{s_n: n\inn\}$, $L^{\prime}$, $(L_n)_{n=0}^\infty$, $\delta>0$
 and $X_{s_n}$ as in Definition \ref{def xtle} (2) such that \[X_{(\T,L,\e)} = \left(\sum_{n=1}^{\infty}\oplus X_{s_n} \right)_{AH(L_0)}.\] Observe that since $L_0\subset L\prime$ it follows that $2/(\min L_0-2) < \delta$ and hence by Proposition \ref{dualZAH} and Remark \ref{constant} there exists $(\Delta_n)_{n\in\N}$ pairwise disjoint subsets of $\N$ such that \[X_{(\T,L,\e)}^*\simeq^{1+\delta} =\left(\sum_{n=1}^{\infty}\oplus\left(X_{s_n}^*\oplus\ell_1(\Delta_n)\right)_1\right)_1.\]
Since $X_{s_n}=X_{(\T_{s_n},L_n,\de)}$ and $o(\T_{s_n})<\alpha$ for every $n\in\N$, applying our inductive assumption we conclude that $X_{s_n}^*\simeq^{1+\delta}\ell_1$. By the choice of $\delta$ the result follows.
\end{proof}

\begin{remark}\label{scriptLinfty}
The above in conjunction with the principle of local reflexivity \cite{LR} yields that the space $X_{(\T,L,\e)}$ is a $\mathcal{L}_{\infty,(1+\e')}$-space for every $\e'>\e$.
\end{remark}

At this point we shall make a few observations that follow from Definition \ref{def xtle} and the discussion made in Section \ref{preliminaries}. First, we mention that each space $X_{(\T,L,\e)}$ is associated with a sequence $(\Delta_n)_{n\in\N}$ that is involved in its construction and we denote the union $\cup_n\Delta_n$ by $\Gamma(\T,L,\e)$. In particular, if $o(T)=0$ the space $X_{(\T,L,\e)}$ has an FDD $(M_n)_{n\in\N}$ such that each $M_n$ is isometric to $\ell_{\infty}(\Delta_n)$, while in the case that $o(\T)>0$ and $\{s_n: n\inn\}$ is the enumeration of the set $\scc(\es_{\T})$ as in the definition of the space $X_{(\T,L,\e)}$, the space admits a Schauder Decomposition $(Z_n)_{n\in\N}$ such that each $Z_n$ is isometric to $(X_{s_n}\oplus\ell_{\infty}(\Delta_n))_{\infty}$.

Using the already introduced terminology of Section \ref{preliminaries}, for the sequel we denote by $P_n$ the projections associated with the FDD or the Schauder Decomposition of the space $X_{(\T,L,\e)}$, where the image $Im P_n=Z_n$ by Lemma \ref{isometryXn} is isometric to $(X_{s_n}\oplus\ell_{\infty}(\Delta_n))_{\infty}$. Also, for $n\in\N$, we denote by $j_n:X_{s_n}\rightarrow \left(\sum_{k=1}^n\oplus(X_{s_n}\oplus\ell_{\infty}(\Delta_n))_{\infty}\right)_\infty$ the natural embedding, as well as by $\pi_n:\left(\sum_{k=1}^n\oplus(X_{s_n}\oplus\ell_{\infty}(\Delta_n))_{\infty}\right)_\infty\rightarrow X_{s_n}$ the natural coordinate projection and by $R_{[1,n]}:\mathcal{Z}\rightarrow \left(\sum_{k=1}^n\oplus(X_{s_n}\oplus\ell_{\infty}(\Delta_n))_{\infty}\right)_\infty$ the natural restriction mappings. We define the projections $I_n:\mathcal{Z}\to \mathcal{Z}$ as
$I_n=i_n\circ j_n\circ \pi_n\circ R_{[1,n]}$. Lemma \ref{constant projectionIn} yields that $\|I_n\|=1$ for every $n\in\N$ and by Corollary \ref{isometryXn} we obtain that the image $Im I_n$ is isometric to $X_{s_n}$.

In a similar manner as above, for each $t\in\T$ non maximal, following the notation in Proposition \ref{tree xtle}(iv) we denote by $P_n^t$ the projections defined upon $X_t$ such that for each $n$ the image $Im P_n^t$ is isometric to $(X_{s_n}\oplus\ell_{\infty}(\Delta_n^t))_{\infty}$ and $\cup_n\Delta_n^t=\Gamma(\T_t,L_t,\delta_t)$. Note that by the above $P_n=P_n^{\es_{\T}}$. Moreover, for $I=(n,m]$ interval of natural numbers and $t\in\T$ we define $P_I^t=\sum_{i=n+1}^mP_i^t$.

Finally, the notion of a block sequence in $X_{(\T,L,\e)}$ is defined as in Section \ref{preliminaries} using the FDD (if $o(T)=0$) or the Schauder Decomposition (if $o(T)\neq0$). In the second case we call the block sequence horizontally block.

\begin{proposition}
Let $\T, L, \e$ be as in Definition \ref{def xtle}. Then there
exist $(L_s)_{s\in\T}$ infinite subsets of the natural numbers,
$(\e_s)_{s\in\T}$ positive real numbers, $(I_s)_{s\in\T}$ norm
one projections defined on $X_{(\T,L,\e)}$ and $(X_s)_{s\in\T}$
infinitely dimensional subspaces of $X_{(\T,L,\e)}$ such that the
following are satisfied:
\begin{enumerate}

\item[(i)] The sets $L_{\es_{\T}}$ and $L$ are equal, if $s,t$ are
nodes in $\T$ then  if $t$ and $s$ are incomparable we have that
$L_s\cap L_t = \varnothing$ while if $s\leq t$ then we have that
$L_t\subset L_s$.

\item[(ii)] The projection $I_{\es_{\T}}$ is the identity map, if
$s,t$ are nodes in $\T$ then if $t$ and $s$ are incomparable we
have that $\im I_s \subset \ker I_t$ and if $s\leq t$ then
$I_s\circ I_t = I_t\circ I_s = I_t$.

\item[(iii)]
The image of the operator $I_s$ is the space $X_s$ and
$X_s$ is isometric to $X_{(T_s,L_s,\e_s)}$.

\item[(iv)]
If $t$ is a non-maximal node of $\T$, then there exists
an enumeration $\{s_n:\; n\inn\}$ of $\scc(t)$ and $L_t^0$ an
infinite subset of $L_t\setminus \cup_{s\in\scc(t)} L_s$ such that
the following is satisfied:
\begin{equation*}
X_t = \left(\sum_{n=1}^\infty\oplus X_{s_n}\right)_{AH(L_t^0)}.
\end{equation*}

\end{enumerate}\label{tree xtle}
\end{proposition}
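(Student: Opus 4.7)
The proof will proceed by transfinite induction on $o(\T)$, following the recursive structure of Definition \ref{def xtle}. In the base case $o(\T)=0$ one takes $\T=\{\es_\T\}$, sets $L_{\es_\T}=L$, $\e_{\es_\T}=\e$, $X_{\es_\T}=X_{(\T,L,\e)}$ and $I_{\es_\T}=\mathrm{id}$, and observes that all four conditions are vacuously or trivially satisfied (condition (iv) does not apply since the unique root is maximal).

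For the inductive step, assume $o(\T)=\alpha>0$ and the statement holds for every tree of strictly smaller order. Fix the data $\{s_n:n\in\N\}$, $\delta>0$, $L^\prime\subseteq L$, the partition $(L_n)_{n=0}^\infty$ of $L^\prime$ and the spaces $X_{s_n}=X_{(\T_{s_n},L_n,\delta)}$ coming from Definition \ref{def xtle}(ii), so that $X_{(\T,L,\e)}=(\sum_n\oplus X_{s_n})_{AH(L_0)}$. Set $L_{\es_\T}=L$, $\e_{\es_\T}=\e$, $I_{\es_\T}=\mathrm{id}$, $X_{\es_\T}=X_{(\T,L,\e)}$, and for every $n$ set $L_{s_n}=L_n$, $\e_{s_n}=\delta$, $X_{s_n}$ as above, and let $I_{s_n}$ be the norm-one projection onto $X_{s_n}$ provided by Lemma \ref{constant projectionIn}. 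For each $n$ apply the inductive hypothesis to $X_{(\T_{s_n},L_n,\delta)}$ to obtain families $(L_t^{(n)})_{t\in\T_{s_n}}$, $(\e_t^{(n)})_{t\in\T_{s_n}}$, $(I_t^{(n)})_{t\in\T_{s_n}}$, $(X_t^{(n)})_{t\in\T_{s_n}}$. For every $t\in\T_{s_n}\setminus\{s_n\}$ set $L_t=L_t^{(n)}$, $\e_t=\e_t^{(n)}$, $X_t=X_t^{(n)}$, and define $I_t=I_t^{(n)}\circ I_{s_n}$, understood via the isometric identification of $\im I_{s_n}$ with $X_{(\T_{s_n},L_n,\delta)}$; since $\im I_t^{(n)}\subseteq \im I_{s_n}$ and both factors have norm one, $I_t$ is a norm-one projection onto $X_t$.

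It remains to verify properties (i)--(iv). For (i), comparability or incomparability of two nodes $s,t$ is analysed by whether they lie in the same subtree $\T_{s_n}$ (invoke the inductive hypothesis) or in different subtrees $\T_{s_n},\T_{s_m}$ (use that $(L_n)_{n=0}^\infty$ is a partition of $L^\prime$ and that $L_t\subseteq L_n$, $L_{t'}\subseteq L_m$). For (ii) the only nontrivial computation is that $I_{s_n}\circ I_{s_m}=0$ for $n\neq m$: inspecting the formula $I_n=i_n\circ j_n\circ\pi_n\circ R_{[1,n]}$ and condition (ii)(b)($\iota$) of Definition \ref{mathcal}, the element $i_n(j_n(x))$ has zero $X_k$-coordinate for every $k\neq n$ (for $k<n$ by $j_n$, for $k>n$ by the extension property of $i_n$), so $\pi_m\circ R_{[1,m]}(i_n\circ j_n(x))=0$; all remaining cases then reduce to the inductive hypothesis combined with the idempotence $I_{s_n}\circ I_t^{(n)}=I_t^{(n)}$ valid because $\im I_t^{(n)}\subseteq\im I_{s_n}$. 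Property (iii) is immediate from the construction and the inductive hypothesis together with the isometry $\im I_{s_n}\simeq X_{(\T_{s_n},L_n,\delta)}$. For (iv), when $t=\es_\T$ we take the very enumeration $\{s_n\}$ chosen above and $L_{\es_\T}^0=L_0$, noting that $L_0\subseteq L\setminus\bigcup_n L_n=L_{\es_\T}\setminus\bigcup_{s\in\scc(\es_\T)}L_s$ by the partition property; when $t\neq\es_\T$, $t$ lies in some $\T_{s_n}$ and the conclusion is furnished directly by the inductive hypothesis applied to $\T_{s_n}$, because $\scc(t)$ and $\T_t$ coincide whether computed in $\T$ or in $\T_{s_n}$. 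The main technical point to be careful about is the verification of the disjointness $I_{s_n}\circ I_{s_m}=0$ at the first level, since it is what allows incomparable nodes from different subtrees to give orthogonal projections; once this is in place, all other cases follow by a routine inductive bookkeeping.
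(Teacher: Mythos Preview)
Your proof is correct and follows essentially the same approach as the paper: transfinite induction on $o(\T)$, with the root data set directly and all deeper data pulled back from the inductive hypothesis via $I_t = I_t^{(n)}\circ I_{s_n}$. In fact you supply more detail than the paper does (the paper simply writes ``It is not hard to check\ldots''), in particular your explicit verification that $I_{s_n}\circ I_{s_m}=0$ for $n\neq m$ using Definition~\ref{mathcal}(ii)(b)($\iota$), which is indeed the only point requiring care.
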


\begin{proof}
We use induction on the order of the tree $\T$, $o(\T)$. In the case that $o(\T)=0$, let $L^{\prime}$ be such that $X_{(\T,L,\e)} = X_{AH(L^\prime)}$. We set $L_{\es_{\T}}=L$, $I_{\es_{\T}}=I$, where $I$ denotes the identity map on $X_{AH(L^\prime)}$ and $\de_{\es_{\T}}=\e$. Assume that $o(\T)=\alpha>0$ and that the statement has been proved for every $\s, M, \delta$ such that $o(\s)<\alpha$, $M\subseteq\N$ infinite and $\delta>0$.

Let $\{s_n: n\inn\}$, $(L_n)_{n=0}^\infty$, $\delta>0$
 and $X_{s_n}$ as in Definition \ref{def xtle} (2) such that \[X_{(\T,L,\e)} = \left(\sum_{n=1}^{\infty}\oplus X_{s_n} \right)_{AH(L_0)}.\] Let also $I_n:X_{(\T,L,\e)}\to X_{(\T,L,\e)}$ be the norm 1 projections for every $n\in\N$ such that $Im I_n$ is isometric to $X_{s_n}$. We recall that $X_{s_n}=X_{(\T_{s_n},L_n,\de)}$ and since $o(\T_{s_n})<\alpha$ applying our inductive assumption for every $n\in\N$ we obtain $(L_s)_{s\in\T_{s_n}}$ infinite subsets of the natural numbers,
$(\e_s)_{s\in\T_{s_n}}$ positive real numbers, $(I_s)_{s\in\T_{s_n}}$ norm
one projections defined on $X_{(\T_{s_n},L_n,\de)}$ satisfying the conditions (i)-(iv) above.

We set $L_{\es_{\T}}=L$, $I_{\es_{\T}}=I$, where $I$ denotes the identity map on $X_{(\T,L,\e)}$, $\e_{\es_{\T}}=\e$. For $t\in\T$ with $t\neq\es_{\T}$ note that there exists $n_0\in\N$ such that $t\in\T_{s_{n_0}}$, hence $L_t$, $\e_t$ and $I_t:X_{s_{n_0}}\to X_{s_{n_0}}$ where $Im I_t$ is isometric to $X_t$, have already been defined by our inductive assumption.

We extend $I_t:X_{(\T,L,\e)}\to X_{(\T,L,\e)}$ as $I_t=I_t\circ I_{n_0}$ and we set $X_t=I_t[X_{(\T,L,\e)}]$. It is not hard to check that $(L_s)_{s\in\T}$,
$(\e_s)_{s\in\T}$, $(I_s)_{s\in\T}$ and $(X_s)_{s\in\T}$ satisfy the thesis.
\end{proof}

\section{Operators on the spaces $X_{(\T,L,\e)}$}\label{the operators}
We study the properties of the operators defined on the spaces $\X$. The purpose is to show that every bounded linear operator is approximated by a sequence of operators, each one of which is a linear combination of the projections $I_s, s\in\T$ plus a compact operator.
We start by giving the definition of Rapidly Increasing Sequences (RIS) in the spaces $X_{(\T,L,\e)}$.

\begin{definition}
Let $L$ be an infinite subset of $\N$, $(X_n)_n$ be a sequence of separable Banach spaces and $X = \left(\sum_n\oplus X_n\right)_{AH(L)}$. We say that a block sequence (respectively a horizontally block sequence) $(z_k)_{k\in\N}$ in $X_{AH(L)}$ (respectively in $X$) is a $C$-rapidly increasing sequence (RIS) if there exists a constant $C>0$ and a strictly increasing sequence $(j_k)_{k\in\N}$ in $L$ such that
%We say that a block sequence $(z_k)_{k\in\N}$ in $X_{(\T,L,\e)}$ is $C$-RIS if there exists a constant $C>0$ and a strictly increasing sequence $(j_k)_{k\in\N}$ in $L$ such that
\begin{enumerate}
\item[(i)] $\|z_k\|\leq C$ for all $k\in\N$
\item[(ii)] $j_{k+1}>\max\ran z_k$
\item[(iii)] $|\overrightarrow{e}_{\gamma}^*(z_k)|\leq \frac{C}{m_i}$ for every $\gamma\in\Gamma$ with $w(\gamma)<m_{j_k}$.
\end{enumerate}
\end{definition}
Note that in the case where of $X_{AH(L)}$ the definition of a $C$-RIS essentially coincides with the corresponding one presented in \cite{AH}. Moreover, the existence of C-RIS in $X = \left(\sum_n\oplus X_n\right)_{AH(L)}$ is proved in a similar manner as it is described in \cite{Z} and makes use of Proposition \ref{upper AH sum} as well as an analogue of \cite[Lemma 8.4]{AH}.

The next result follows readily from \cite[Proposition 5.4]{AH} and \cite[Proposition 5.12]{Z}.

\begin{proposition}\label{basiceq}
Let $L$ be an
infinite subset of the natural numbers and $X$ be either $X_{AH(L)}$ or $X = \left(\sum_{n=1}^\infty\oplus
X_n\right)_{AH(L)}$, where $(X_n)_n$ is a sequence of separable Banach spaces. Let also and $(z_k)_k$ be a C-RIS in $X$ and $j_0\in\N$. Then, \[\left\|n_{j_0}^{-1}\sum_{k=1}^{n_j}z_k\right\|\leq\begin{cases} \frac{10C}{m_{j_0}}\ \ \text{if }\ j_0\in L\ \\ \frac{10C}{m_{j_0}^2}, \ \ \text{if} \ \ j_0\notin L.\end{cases}\]
\end{proposition}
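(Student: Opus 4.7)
The plan is to derive the two estimates as essentially direct consequences of the cited propositions, with one additional observation handling the case $j_0\notin L$. For the first case, when $j_0\in L$, the bound $\|n_{j_0}^{-1}\sum_{k=1}^{n_{j_0}}z_k\|\le 10C/m_{j_0}$ is precisely the content of Proposition~5.4 of \cite{AH} when $X=X_{AH(L)}$ and of Proposition~5.12 of \cite{Z} when $X=(\sum_n\oplus X_n)_{AH(L)}$, so no further work is required.

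For the second case, when $j_0\notin L$, I would revisit the proofs of those propositions while exploiting the observation that the BD-construction of $X$ uses only weights drawn from the set $\{m_j:j\in L\}$. Consequently every $\gamma\in\Gamma$ satisfies $w(\gamma)=m_i$ for some $i\in L$, and in particular $w(\gamma)\ne m_{j_0}$. Fixing such a $\gamma$ of weight $m_i$, I would evaluate $\overrightarrow{e}_\gamma^*$ on the average $x=n_{j_0}^{-1}\sum_k z_k$ by unfolding the tree analysis of $\gamma$ (Lemma~\ref{analyze back} in the AH-sum case, and its standard analogue for AH), then split the indices $k$ according to whether $m_{j_k}>m_i$ or not. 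In the first regime RIS condition (iii) gives $|\overrightarrow{e}_\gamma^*(z_k)|\le C/m_i$ per block; in the second regime only a bounded number of $k$ contribute, by strict monotonicity of $(j_k)$ combined with the super-lacunarity of $(m_j)$, and for each one we use the trivial bound $C$.

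The bottleneck in the standard estimate is precisely the case $m_i=m_{j_0}$, which produces the leading $10C/m_{j_0}$ term. When $j_0\notin L$ this case is ruled out, and the closest admissible weights $m_{j_0-1}$ and $m_{j_0+1}$ are separated from $m_{j_0}$ by the standard lacunarity of the parameter sequence (which entails $m_{j+1}\ge m_j^2$ and $n_j\ge m_j^2$). A routine accounting shows that in each of the two resulting subcases the bound on $|\overrightarrow{e}_\gamma^*(x)|$ improves to at most $10C/m_{j_0}^2$, and since $\gamma$ was arbitrary this yields the claimed norm estimate.

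The main obstacle I anticipate is the bookkeeping in the AH-sum setting, where the tree analysis of $\gamma$ has to be traced across both the outer $AH(L_0)$ layer and any functionals coming from the inner summand spaces $X_n$; however, this parallels the argument of \cite[Proposition~5.12]{Z} almost verbatim and introduces no genuinely new phenomenon, beyond the elimination of the $m_i=m_{j_0}$ regime described above.
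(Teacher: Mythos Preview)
Your proposal is correct and matches the paper's treatment: the paper gives no argument beyond the sentence ``follows readily from \cite[Proposition 5.4]{AH} and \cite[Proposition 5.12]{Z},'' and your elaboration for the case $j_0\notin L$ (namely that no $\gamma\in\Gamma$ can have weight $m_{j_0}$, so the worst case of the cited estimates is vacuous and the remaining cases already yield $10C/m_{j_0}^2$) is exactly the intended reading of that citation.
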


The following is proved in \cite[Proposition 5.14]{Z}
\begin{lemma}
Let $L$ be an
infinite subset of the natural numbers, $(X_n)_n$ be a sequence of separable Banach spaces and $X = \left(\sum_{n=1}^\infty\oplus
X_n\right)_{AH(L)}$. Let also $Y$ be a Banach space, $T: X \rightarrow Y$ be a bounded linear operator and
assume that there exists a seminormalized horizontally block
sequence $(x_k)_k$ in $X$ such that $\lim\sup_k\|Tx_k\|>0$. Then
there exists a RIS $(y_k)_k$ in $X$ such that
$\lim\sup_k\|Ty_k\|>0$.\label{not hc}
\end{lemma}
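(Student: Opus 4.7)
The plan is to produce the RIS $(y_k)$ by an averaging procedure applied to a suitably extracted subsequence of $(x_k)$. After an initial extraction ensuring $\|Tx_k\|\geq \eta>0$ for some $\eta>0$ and normalizing so that $\|x_k\|\leq 1$, I would inductively choose a rapidly increasing sequence $(j_k)\subset L$ together with pairwise disjoint successive finite index sets $F_k\subset\N$ of cardinality $n_{j_k}$, and set
$$y_k \;=\; \frac{m_{j_k}}{n_{j_k}}\sum_{\ell\in F_k}x_\ell,$$
with $j_{k+1}$ chosen only after the range of $y_k$ is determined, so as to guarantee $j_{k+1}>\max\ran y_k$.

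That $(y_k)$ forms a $C$-RIS would follow from standard estimates. Boundedness $\|y_k\|\leq 10C$ is supplied by the upper estimate of Proposition \ref{basiceq}. The spacing condition (ii) is built into the construction. The coordinate estimate (iii), namely $|\overrightarrow{e}_\gamma^*(y_k)|\leq C'/m_{j_k}$ for $\gamma$ with $w(\gamma)<m_{j_k}$, is extracted from Lemma \ref{analyze back}: the restriction of $\overrightarrow{e}_\gamma^*$ to the span of the relevant $x_\ell$'s decomposes into at most one term of the form $\overrightarrow{e}_\xi^*$ (supported in the range of a single $x_\ell$) plus a $1/m_{w(\gamma)}$-scaled sum $\overrightarrow{b}^*\circ P_{[p,n]}$, and both contributions yield at most $C'/m_{j_k}$ after summation over $F_k$ and multiplication by $m_{j_k}/n_{j_k}$.

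The main obstacle is ensuring $\limsup_k\|Ty_k\|>0$. I would handle this via Rosenthal's $\ell_1$-theorem applied to the bounded sequence $(Tx_k)\subset Y$. If $(Tx_k)$ admits an $\ell_1$-subsequence, then along that subsequence $\|T\sum_{\ell\in F_k}x_\ell\|\geq c\,n_{j_k}$, giving $\|Ty_k\|\geq c\,m_{j_k}$, which is incompatible with the uniform bound $\|Ty_k\|\leq\|T\|\cdot 10C$; this case therefore cannot arise for large $k$. Otherwise $(Tx_k)$ has a weakly Cauchy subsequence. Replacing $(x_k)$ by the consecutive differences $x_{2k+1}-x_{2k}$ (which remain seminormalized horizontally block in $X$ and whose $T$-images can be arranged, via a further extraction, to be weakly null with norms bounded below), Bessaga--Pelczy\'nski yields a basic subsequence of the $T$-images with some basis constant $K$. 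Choosing the sets $F_k$ in accordance with norming functionals of this basic subsequence then yields $y_k$ with $\|Ty_k\|$ bounded below along an infinite set of indices.

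The delicate step is this last alignment of the combinatorial averaging sets $F_k$ with the dual geometry of the basic subsequence in $Y^*$, so that the AH-type averaging in $X$ does not annihilate the images under $T$; this is essentially the content of \cite[Proposition 5.14]{Z}.
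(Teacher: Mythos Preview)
The paper gives no proof of this lemma; it simply cites \cite[Proposition 5.14]{Z}. So there is nothing to compare your argument to directly, but your sketch has a genuine gap.

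The construction of the RIS as weighted averages $y_k=\tfrac{m_{j_k}}{n_{j_k}}\sum_{\ell\in F_k}x_\ell$ is the right template. Two small remarks here: your stated estimate $|\overrightarrow{e}_\gamma^*(y_k)|\leq C'/m_{j_k}$ is stronger than condition~(iii) requires and stronger than what the tree analysis actually gives --- the correct bound is $\leq C'/m_i$ with $m_i=w(\gamma)$; and you cannot invoke Proposition~\ref{basiceq} for the upper bound $\|y_k\|\leq 10C$, since that proposition is stated for RIS and you are still in the process of building one. The upper bound has to be obtained directly from the evaluation-functional analysis.

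The substantive problem is your lower bound on $\|Ty_k\|$ in the weakly Cauchy (equivalently, since the Schauder decomposition is shrinking and $(x_k)$ is horizontally block, weakly null) case. Extracting a basic subsequence of $(Tx_\ell)$ and appealing to norming or coefficient functionals gives you at best
\[
\|Ty_k\|\;\geq\;\frac{1}{2K}\cdot\frac{m_{j_k}}{n_{j_k}}\cdot\inf_\ell\|Tx_\ell\|,
\]
which tends to $0$. There is no choice of $F_k$ ``in accordance with norming functionals'' that does better: if $(Tx_\ell)$ is equivalent to the unit vector basis of $c_0$ in $Y$ --- which is perfectly compatible with $T$ bounded, since any seminormalized basic sequence dominates the $c_0$ basis --- then $\bigl\|\sum_{\ell\in F_k}Tx_\ell\bigr\|\asymp 1$ regardless of $F_k$, and $\|Ty_k\|\to 0$. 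Your Rosenthal dichotomy correctly rules out the $\ell_1$ branch, but the remaining branch is exactly where the difficulty lives, and the basic-sequence mechanism is too weak to resolve it.

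You are right that this ``alignment'' step is the whole content of \cite[Proposition 5.14]{Z}; the point is that the argument there does not run through abstract basic-sequence geometry in $Y$. It stays on the $X$ side, exploiting the concrete description of $X^*$ from Proposition~\ref{dualZAH}(iii) and the evaluation-functional combinatorics of the Bourgain--Delbaen construction to control the pulled-back functionals $T^*f_k$ (with $f_k$ norming $Tx_k$) well enough that their action on suitably chosen averages stays bounded below. Without that input from the structure of $X^*$, the averaging will in general kill the $T$-images.
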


We adapt the following definition given in \cite{Z}.

\begin{definition}\label{zontirohaly mopact}
Let $(X_n)_n$ be a sequence of separable Banach spaces, $L$ be an infinite subset of the natural numbers, $X = \left(\sum_{n=1}^\infty\oplus
X_n\right)_{AH(L)}$ and $Y$ be a Banach space. We say that an operator $K: X \to Y$ is horizontally compact if for every $\delta>0$ there exists $n_0\in\N$ such that $\|K-K\circ P_{[1,n_0]}\|<\delta$. Equivalently, $K$ is horizontally compact if $\lim_k\|K(x_k)\| = 0$ for every horizontally block sequence $(x_k)_k$ in $X$.

%Let $\T$ be a tree with $o(\T)>0$ and $Y$ be a Banach space. We say that an operator $K:X_{(\T,L,\e)}\to Υ$ is horizontally compact if for every $\delta>0$ there exists $n_0\in\N$ such that $\|K-K\circ P_{[1,n_0]}\|<\delta$, or equivalently $\|K(x_k)\|\stackrel{n\to\infty}{\to}0$ for every horizontally block sequence $(x_k)_k$ in $X_{(\T,L,\e)}$.
\end{definition}

\begin{proposition}\label{block to RIS}
Let $(X_n)_n$ be a sequence of separable Banach spaces, $L$ be an
infinite subset of $\N$ and $Y$ be a Banach space. The following hold:
\begin{enumerate}
\item[(i)] If a bounded linear operator $T:X_{AH(L)}\rightarrow Y$ is not compact, then there exists a RIS  $(x_k)_k$ in $X_{AH(L)}$ such that $\lim\sup_k\|Tx_k\|>0$.
\item[(ii)] If a bounded linear operator $T:\left(\sum_{n=1}^\infty\oplus
X_n\right)_{AH(L)}\rightarrow Y$ is not horizontally compact, then there exists a RIS $(x_k)_k$ in $\left(\sum_{n=1}^\infty\oplus
X_n\right)_{AH(L)}$ such that $\lim\sup_k\|Tx_k\|>0$.
\end{enumerate}
\end{proposition}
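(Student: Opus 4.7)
Part (ii) follows directly from the tools already established. By the equivalent formulation of horizontal compactness in Definition \ref{zontirohaly mopact}, failure of horizontal compactness of $T$ provides a horizontally block sequence $(w_k)_k$ in $\left(\sum_{n=1}^\infty\oplus X_n\right)_{AH(L)}$ with $\|T w_k\|\not\to 0$. After passing to a subsequence and rescaling we may assume $(w_k)_k$ is seminormalized with $\limsup_k \|T w_k\| > 0$, and Lemma \ref{not hc} then produces the desired RIS.

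For part (i) the plan is parallel: first produce, from non-compactness of $T:X_{AH(L)}\to Y$, a seminormalized block sequence $(z_k)_k$ with respect to the FDD $(M_n)_n$ along which $T$ is bounded below, and then thin it into a RIS. The first step exploits that $X_{AH(L)}^*\simeq \ell_1$ by Remark \ref{constantAH}, which makes the FDD $(M_n)_n$ shrinking. Starting from a bounded sequence $(y_k)_k$ whose image has no Cauchy subsequence, pass to a subsequence with $\|T y_k - T y_\ell\|\geq \de>0$ for $k\neq \ell$. A standard Bessaga--Pe\l{}czy\'nski/sliding-hump argument based on the shrinking FDD (using that each $P_{[1,n]}(y_k)$ lives in a finite-dimensional space, hence admits a convergent diagonal subsequence) then yields a seminormalized block sequence $(z_k)_k$ which, up to norm-null perturbation, is a sequence of differences extracted from $(y_k)_k$; in particular $\liminf_k \|T z_k\|>0$.

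The second step for (i) is the RIS extraction from the bounded block sequence $(z_k)_k$, which is exactly the content of \cite[Lemma 8.4]{AH}. One inductively selects a strictly increasing sequence $(j_\ell)_\ell\subset L$ and a subsequence $(z_{k_\ell})_\ell$ so that the weight-control estimate $|\overrightarrow{e}_\gamma^*(z_{k_\ell})|\leq C/m_{j_\ell}$ holds for every $\gamma\in\Gamma$ with $w(\gamma)<m_{j_\ell}$; the remaining RIS requirements follow from the block structure of $(z_{k_\ell})_\ell$ together with $j_{\ell+1}>\max\ran z_{k_\ell}$. The principal obstacle is this last step, as it relies on the detailed combinatorics of the $\Gamma$-set and the control on how functionals $\overrightarrow{e}_\gamma^*$ evaluate on block vectors relative to their weights; however since it is already carried out in \cite{AH} we may invoke it directly. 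In both parts the conclusion $\limsup_\ell \|T z_{k_\ell}\|>0$ is preserved by the subsequence extraction, completing the proof.
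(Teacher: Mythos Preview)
Your argument is correct and follows essentially the same route as the paper: for (ii) you invoke the block-sequence characterization of horizontal compactness and then Lemma~\ref{not hc}, exactly as the paper does (the paper phrases it contrapositively but the content is identical); for (i) you pass from non-compactness to a bounded block sequence on which $T$ is bounded below and then extract a RIS via the machinery of \cite{AH}, which is again what the paper does. The only cosmetic differences are that you spell out the sliding-hump step (the paper simply asserts the existence of the block sequence) and that you cite \cite[Lemma~8.4]{AH} for the RIS extraction whereas the paper invokes \cite[Proposition~5.11]{AH}.
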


\begin{proof}
For (1) we first observe that since $T$ is not compact there exists $\delta>0$ and a bounded block sequence $(x_k)_k$ in $X_{AH(L)}$ such that $\|T(x_k)\|\geq\delta$. By \cite[Proposition 5.11]{AH} the result follows. For (2) assume towards a contradiction that $\lim\sup_k\|Tx_k\|=0$ for every RIS sequence $(x_k)_k$
in $X$. By Lemma \ref{not hc} again we conclude that $\lim\sup_k\|Tx_k\|=0$ for every bounded horizontally block sequence in $X$. It follows that $T$ is horizontally compact yielding a contradiction.
\end{proof}

\begin{lemma}
Let $(X_k)_k$, $(Y_k)_k$ be sequences of separable Banach spaces
and $L$, $M$ be infinite subsets of $\N$ such that
$L\cap M$ is finite. Let moreover $(x_k)_k$ be a C-RIS in
$\left(\sum_{n=1}^\infty\oplus X_n\right)_{AH(L)}$ and $(y_k)_k$
be a seminormalized horizontally block sequence in
$\left(\sum_{n=1}^\infty\oplus Y_n\right)_{AH(M)}$. Then $(x_k)_k$
does not dominate $(y_k)_k$, i.e. the map $x_k\rightarrow y_k$
does not extend to a bounded linear operator. \label{no domination}
\end{lemma}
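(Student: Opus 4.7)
I would argue by contradiction. Suppose $(x_k)$ does dominate $(y_k)$, so there is a constant $K>0$ with
\[
\left\|\sum_{i} a_i y_i\right\| \leq K\left\|\sum_{i} a_i x_i\right\|
\]
for every finitely supported scalar sequence $(a_i)$. The overarching strategy is to exhibit, for infinitely many $j\in M$, a subsequence of length $n_j$ whose $y$-side sum has norm at least of order $n_j/m_j$, while the matching $x$-side sum has norm at most of order $n_j/m_j^2$. Domination then forces $m_j$ to stay bounded, which is impossible.

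Set $c=\inf_k\|y_k\|>0$, which is positive since $(y_k)$ is seminormalized. Applying Proposition \ref{upper AH sum} to the horizontally block sequence $(y_k)$ in $\bigl(\sum\oplus Y_n\bigr)_{AH(M)}$ gives an infinite $\tilde M\subset M$ such that for every $j\in\tilde M$ there is a subsequence $(y_{k_i})_{i=1}^{n_j}$ with
\[
\left\|\sum_{i=1}^{n_j} y_{k_i}\right\| \geq \frac{1}{m_{\min M}}\cdot\frac{1}{2m_j}\sum_{i=1}^{n_j}\|y_{k_i}\|\geq \frac{c\,n_j}{2\,m_{\min M}\,m_j}.
\]
Because $L\cap M$ is finite, the set $\tilde M\setminus L$ is infinite, so I may choose $j\in\tilde M$ arbitrarily large with $j\notin L$. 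Now the matching subsequence $(x_{k_i})_{i=1}^{n_j}$ of the $C$-RIS $(x_k)$ inherits the three defining RIS conditions with the witnessing subsequence $(j_{k_i})$ (boundedness, support condition $j_{k_{i+1}}\geq j_{k_i+1}>\max\ran x_{k_i}$, and the functional estimate), so it forms the first $n_j$ terms of a $C$-RIS. Proposition \ref{basiceq} applied with $j_0=j\notin L$ then yields
\[
\left\|\sum_{i=1}^{n_j} x_{k_i}\right\| \leq \frac{10\,C\,n_j}{m_j^{2}}.
\]

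Combining the two estimates with the domination inequality gives
\[
\frac{c\,n_j}{2\,m_{\min M}\,m_j}\leq K\cdot\frac{10\,C\,n_j}{m_j^{2}},
\]
that is, $m_j\leq 20\,KC\,m_{\min M}/c$ for every $j$ produced above. Since $\tilde M\setminus L$ is infinite and $m_j\to\infty$, this is the sought contradiction. The only mildly delicate point is that the indices $(k_i)$ are dictated by Proposition \ref{upper AH sum} acting on $(y_k)$, so one must check that the matching subsequence of $(x_k)$ is still a $C$-RIS, but this is immediate from the definition. Conceptually, the proof rests on the mismatch between the upper estimate $\sim 1/m_j$ enjoyed by block sums in the $AH(M)$ sum when $j\in M$ and the stronger decay $\sim 1/m_j^{2}$ forced on a RIS in the $AH(L)$ sum when $j\notin L$, a disparity made possible precisely by the disjointness (up to a finite set) of $L$ and $M$.
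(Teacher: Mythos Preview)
Your proof is correct and follows essentially the same route as the paper's: both apply Proposition \ref{upper AH sum} on the $(y_k)$ side to get a lower bound of order $n_j/m_j$ for some $j\in\tilde M\subset M$, then apply Proposition \ref{basiceq} on the $(x_k)$ side with $j\notin L$ to get the upper bound of order $n_j/m_j^2$, and derive a contradiction from the domination constant. You are in fact slightly more careful than the paper in explicitly noting that the subsequence $(x_{k_i})$ dictated by the $y$-side remains a $C$-RIS, a point the paper leaves implicit.
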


\begin{proof}
By Proposition \ref{upper AH sum} there exists $\tilde{M}$ subset of $M$ with the property that for every $j\in \tilde{M}$ passing to a subsequence we have that $\|\sum_{k=1}^{n_j}y_k\|\geq\frac{1}{2m_j}\sum_{i=1}^{n_j}\|y_k\|\geq\frac{1}{m_{\min M}}\frac{n_j}{4m_j}$. Moreover, by Proposition \ref{basiceq} for $j\notin L$ we obtain that $\|\sum_{k=1}^{n_j}x_k\|\leq\frac{10C n_j}{m_j^2}$. Assume that there exists a constant $c>0$ such that $\|\sum_{k=1}^{n}a_kx_k\|\geq c\|\sum_{k=1}^{n}a_ky_k\|$ for every $n\in\N$ and every sequence of scalars $(a_k)_k$. Since $M\cap L$ is finite, we can choose $j\in \tilde{M}$, $j\notin L$ such that $m_j>m_{\min M}\cdot\frac{40C}{c}$. Combining the above and passing to subsequences we conclude that \[\frac{1}{m_{\min M}}\frac{cn_j}{4m_j}\leq c\left\|\sum_{k=1}^{n_j}y_k\right\|\leq\left\|\sum_{k=1}^{n_j}x_k\right\|\leq\frac{10C n_j}{m_j^2}.\] The choice of $j$ yields a contradiction.
\end{proof}

\begin{remark}\label{ntomata} In a similar manner, for $L, M$, $(Y_k)_k$ as above, a C-RIS in $X_{AH(L)}$ cannot dominate a seminormalized horizontally block sequence in the space $\left(\sum_{n=1}^\infty\oplus Y_n\right)_{AH(M)}$.
\end{remark}

\begin{lemma}
Let $(X_n)_n$ be a sequence of separable Banach spaces
and $M$ be an infinite subset of the natural numbers. Let also
$\T, L, \e$ be as in Definition \ref{def xtle} such that $L\cap M$
is finite. Then every bounded linear operator $T:
\left(\sum_{n=1}^\infty\oplus X_n\right)_{AH(M)}\rightarrow
X_{(\T,L,\e)}$ is horizontally compact.\label{for dist}
\end{lemma}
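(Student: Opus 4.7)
The plan is to argue by transfinite induction on $o(\T)$ and to assume for contradiction that $T$ is not horizontally compact. Proposition \ref{block to RIS}(ii) then supplies a $C$-RIS $(x_k)_k$ in $\mathcal{W}=\left(\sum_{n=1}^\infty\oplus X_n\right)_{AH(M)}$ with $\|Tx_k\|\geq \delta>0$ after passing to a subsequence. Since the Schauder decomposition of $\mathcal{W}$ is shrinking by Proposition \ref{dualZAH}(i), every horizontally block sequence is weakly null, and in particular $(Tx_k)$ is weakly null in $X_{(\T,L,\e)}$.

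The main step is to extract a subsequence $(Tx_{k_i})_i$ that is, up to norm-summable errors, a seminormalized horizontally block sequence $(w_i)_i$ in the codomain. This is achieved by a standard gliding hump with respect to the FDD of $X_{AH(L')}$ when $o(\T)=0$ or to the Schauder decomposition $(Z_n)_n$ of $\left(\sum\oplus X_{s_n}\right)_{AH(L_0)}$ when $o(\T)>0$. The gliding hump requires that $\|P_n^{\mathrm{cod}}(Tx_k)\|\to 0$ as $k\to\infty$ for each fixed $n$, where $P_n^{\mathrm{cod}}$ is the codomain's $n$-th projection.

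If $o(\T)=0$ this is automatic, since $P_{[1,n]}^{\mathrm{cod}}$ has finite-dimensional range and weak convergence to zero inside a finite-dimensional subspace coincides with norm convergence. If $o(\T)>0$ the layers $Z_n$ are infinite-dimensional and this is the main obstacle. I will overcome it via the decomposition $P_n^{\mathrm{cod}}=I_n+(P_n^{\mathrm{cod}}-I_n)$ of Lemma \ref{constant projectionIn} together with Remark \ref{difference compact}: the operator $P_n^{\mathrm{cod}}-I_n$ has finite-dimensional range, so it sends the weakly null sequence $(Tx_k)$ to a norm-null one, while $I_n\circ T$ takes values in $i_n\circ j_n[X_{s_n}]$, isometric to $X_{s_n}=X_{(\T_{s_n},L_n,\de)}$. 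Since $o(\T_{s_n})<o(\T)$ and $L_n\cap M\subseteq L\cap M$ is finite, the inductive hypothesis applies and gives that $I_n\circ T$ is horizontally compact; because $(x_k)$ is horizontally block this forces $\|I_nTx_k\|\to 0$, and together with the finite-rank piece one obtains $\|P_n^{\mathrm{cod}}(Tx_k)\|\to 0$.

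Once the gliding hump produces $(w_i)_i$ with $\|Tx_{k_i}-w_i\|\leq 2^{1-i}$ and $\|w_i\|\geq \delta/2$, I close the argument by a norm comparison. The subsequence $(x_{k_i})_i$ is itself a $C$-RIS with weights $(j_{k_i})_i$, so Proposition \ref{basiceq} yields $\|\sum_{i=1}^{n_j}x_{k_i}\|\leq 10Cn_j/m_j^2$ for every $j\notin M$, while Proposition \ref{upper} in the base case and Proposition \ref{upper AH sum} in the inductive step produce an infinite $\tilde{L}\subseteq L'$ (respectively $\tilde{L}\subseteq L_0$) and, after a final passage to a subsequence, the lower bound $\|\sum_{i=1}^{n_j}w_i\|\geq c\,n_j/m_j$ for every $j\in\tilde{L}$. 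Because $L\cap M$ is finite and $\tilde{L}\subseteq L$, the set $\tilde{L}\setminus M$ is infinite, so $j$ may be chosen simultaneously outside $M$ and inside $\tilde{L}$; letting such $j$ be large, the triangle inequality $\|\sum w_i\|\leq \|T\|\cdot\|\sum x_{k_i}\|+\sum\|Tx_{k_i}-w_i\|$ forces $n_j/m_j\leq O(n_j/m_j^2)+O(1)$, which is absurd for large $j$ and yields the desired contradiction.
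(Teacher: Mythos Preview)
Your argument is correct and follows essentially the same route as the paper's proof: transfinite induction on $o(\T)$, reduction to a RIS via Proposition~\ref{block to RIS}, a sliding-hump argument to replace $(Tx_k)$ by a seminormalized (horizontally) block sequence in the codomain, and the final norm comparison between Proposition~\ref{basiceq} and Proposition~\ref{upper}/\ref{upper AH sum}. The only cosmetic differences are that the paper packages your final norm comparison as an appeal to Lemma~\ref{no domination} in the inductive step, and obtains $\|P_{[1,m]}^{\mathrm{cod}}Tx_k\|\to 0$ by declaring $R_{[1,m]}\circ T$ horizontally compact rather than writing out your decomposition $P_n^{\mathrm{cod}}=I_n+(P_n^{\mathrm{cod}}-I_n)$ explicitly; the content is the same.
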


\begin{proof}
We use transfinite induction on the rank of the tree $o(\T)$. Suppose that $o(\T)=0$ and assume towards a contradiction that $T$ is not horizontally compact. By Proposition \ref{block to RIS} there exists a RIS sequence $(x_k)_k$ and $\delta>0$ such that $\|T(x_k)\|\geq\delta$. We may assume that $(x_k)_k$ is normalized and note that $(T(x_k))_k$ is weakly null.

Since $X_{(\T,L,\e)}=X_{AH(L^{\prime})}$ for some $L^{\prime}\subseteq L$ we may assume that $(T(x_k))_k$ is a block sequence and by Proposition \ref{upper} we obtain $\tilde{L}\subseteq L^\prime$ infinite such that for every $j\in \tilde{L}$, $\|\sum_{k=1}^{n_j}T(x_k)\|\geq \frac{1}{m_{\min L'}}\frac{\delta n_j}{4m_j}$, passing to a subsequence. Let $j\in\tilde{L}$ such that $j\notin M$ and $m_j>m_{\min L'}\cdot\frac{10\|T\|}{\delta}$. Proposition \ref{basiceq} yields that $\|\sum_{k=1}^{n_j}x_k\|\leq\frac{10 n_j}{m_j^2}$ yielding a contradiction by the choice of $j$.

Suppose now that $o(\T)=\alpha>0$ and assume that for every $\s, M^\prime, \e^\prime$  as in Definition \ref{def xtle} such that $o(\s)<\alpha$, $M^\prime\cap M$ is finite,  every bounded linear operator $T:
\left(\sum_{n=1}^\infty\oplus X_n\right)_{AH(M)}\rightarrow
X_{(\s,M^\prime,\e^\prime)}$ is horizontally compact. We will prove that the same holds for $T:
\left(\sum_{n=1}^\infty\oplus X_n\right)_{AH(M)}\rightarrow
X_{(\T,L,\e)}$ and let $\{s_n: n\inn\}$, $(L_n)_{n=0}^\infty$, $\delta>0$
as in Definition \ref{def xtle} (2) such that $X_{(\T,L,\e)} = \left(\sum_{n=1}^{\infty}\oplus X_{s_n} \right)_{AH(L_0)}$ where  $X_{s_n}=X_{(\T_{s_n},L_n,\de)}$. Assume towards the contradiction that this is not the case and by Proposition \ref{block to RIS} let $(x_k)_k$ be a normalized RIS sequence  and $\delta>0$ such that $\|T(x_k)\|\geq\delta$, for every $k\in\N$. For $m\inn$ we consider the operator
\[R_{[1,m]}\circ T:\left(\sum_{n=1}^\infty\oplus X_n\right)_{AH(M)}\rightarrow \left(\sum_{n=1}^m\oplus\left(X_{s_n}\oplus\ell_{\infty}(\Delta_n)\right)_\infty\right)_{\infty},\]
where $\cup_n\Delta_n=\Gamma(\T,L,\e)$. Using our inductive assumption we deduce that the operator $R_{[1,m]}\circ T$ is horizontally compact, hence $\lim_k\|P_{[1,m]}(T(x_k))\| = \lim_k \|i_m\circ R_{[1,m]}(T(x_k))\| = 0$.

By a sliding hump argument, it follows that $(T(x_k))_k$ is equivalent to a horizontally block sequence in $X_{(\T,L,\e)}$ and, since $T$ is bounded, by Lemma \ref{no domination} we arrive to a contradiction.
\end{proof}

\begin{remark}\label{AHcompact}
For $M$, $\T, L, \e$ as above we obtain that every bounded linear operator $T:X_{AH(M)}\rightarrow X_{(\T,L,\e)}$ is compact.
\end{remark}

The following is similar to \cite[Theorem 10.4]{AH}.

\begin{proposition}
Let $\T,L,\e$ and $\s, M, \de$ be as in Definition \ref{def xtle}
such that $L\cap M$ is finite. Then every bounded linear operator
$T: X_{(\T,L,\e)}\rightarrow X_{(\s,M,\de)}$ is
compact.\label{disjoint weights compact}
\end{proposition}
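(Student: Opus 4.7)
The plan is to proceed by transfinite induction on the order $o(\T)$ of the source tree. When $o(\T) = 0$, the space $X_{(\T,L,\e)}$ is of the form $X_{AH(L')}$ for some infinite $L' \subseteq L$, so $L' \cap M \subseteq L \cap M$ is finite, and Remark \ref{AHcompact} immediately yields that every bounded operator $T: X_{AH(L')} \to X_{(\s,M,\de)}$ is compact.

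For the inductive step, assume $\al = o(\T) > 0$ and write $X_{(\T,L,\e)} = (\sum_{n=1}^\infty \oplus X_{s_n})_{AH(L_0)}$, where the $L_n$ ($n \geq 0$) are disjoint infinite subsets of $L$ and each summand is isometric to $X_{(\T_{s_n}, L_n, \de')}$ for some fixed $\de'$. The first key step is to invoke Lemma \ref{for dist}: since $L_0 \subseteq L$, we have $L_0 \cap M$ finite, so $T$ is horizontally compact. Hence there is a subsequence of $(T \circ P_{[1,n]})_{n}$ converging to $T$ in operator norm, and since the compact operators form a closed subspace of $\mathcal{L}(X_{(\T,L,\e)},X_{(\s,M,\de)})$, it suffices to show that $T \circ P_{[1,n]}$ is compact for every $n$.

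For fixed $n$, decompose $P_{[1,n]} = \sum_{k=1}^n P_k$ and, using Remark \ref{difference compact}, write $P_k = I_{s_k} + (P_k - I_{s_k})$, where $P_k - I_{s_k}$ has finite rank. Thus $T \circ (P_k - I_{s_k})$ is finite rank, and the remaining task is to verify that $T \circ I_{s_k}$ is compact. But $I_{s_k}$ is a norm-one projection onto a subspace isometric to $X_{(\T_{s_k}, L_k, \de')}$, with $o(\T_{s_k}) < \al$ and $L_k \cap M \subseteq L \cap M$ finite. By the inductive hypothesis the restriction of $T$ to $\im I_{s_k}$ is compact, so $T \circ I_{s_k}$ is compact and therefore so is $T \circ P_{[1,n]}$.

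The main obstacle is the passage from the global operator $T$ to its pieces on each summand $X_{s_k}$, where induction can bite. Horizontal compactness via Lemma \ref{for dist} is precisely what unlocks this reduction: it approximates $T$ in norm by truncations $T \circ P_{[1,n]}$, each of which splits into finitely many manageable blocks — a finite-rank part on each $\ell_\infty(\Delta_k)$ factor (from Remark \ref{difference compact}) and a part on $X_{s_k}$ corresponding to a strictly smaller tree order, where the inductive hypothesis applies. Without this horizontal compactness step one would have no uniform control over the infinitely many summands.
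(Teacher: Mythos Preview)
Your proof is correct and follows essentially the same approach as the paper's: transfinite induction on $o(\T)$, with the base case handled by Remark~\ref{AHcompact}, and the inductive step using Lemma~\ref{for dist} to obtain horizontal compactness, then Remark~\ref{difference compact} and the inductive hypothesis to show each $T\circ P_{[1,n]}$ is compact. The only cosmetic difference is that horizontal compactness in fact gives convergence of the full sequence $(T\circ P_{[1,n]})_n$ to $T$, not just a subsequence, though your weaker statement is of course sufficient.
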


\begin{proof}
We use a transfinite induction on $o(\T)$. In the case that $o(\T)$ is zero, let $L^\prime$ such that $X_{(\T, L, \e)} = X_{AH(L^\prime)}$ and observe that the statement follows by Remark \ref{AHcompact}.

Assume now that $o(\T)=\alpha$  and suppose that for every tree $\T^\prime$ with $o(\T^\prime) <\alpha$,  every operator $T: X_{(\T^\prime,L^\prime,\e)}\rightarrow X_{(\s,M,\de)}$ is compact for every $\s, M, \de$ that satisfy Definition \ref{def xtle}. Fix $\s, M, \de$ and let also $\{s_n: n\inn\}$, $(L_n)_{n=0}^\infty$, $\delta>0$
 as in Definition \ref{def xtle} (2) such that $X_{(\T,L,\e)} = \left(\sum_{n=1}^{\infty}\oplus X_{s_n} \right)_{AH(L_0)}$ where $X_{s_n}=X_{(\T_{s_n},L_n,\de)}$. In order to show that $T: X_{(\T,L,\e)}\rightarrow X_{(\s,M,\de)}$ is
compact, we first need to observe that by Lemma \ref{for dist} the operator $T$ is horizontally compact. Hence, $\lim_k\|T-TP_{[1,k]}\| = 0$.

%We consider \[T\circ P_{[1,k]}:\sum_{n=1}^k\oplus X_{s_n}\oplus\ell_{\infty}(\Delta_n)\rightarrow X_{(\s,M,\de)},\] where $\cup_n\Delta_n=\Gamma(\T,L,\e)$.
Let $I_{s_n}:X_{(\T,L,\e)}\to X_{s_n}$ be the projections defined in Proposition \ref{tree xtle}. By Remark \ref{difference compact} the operator $T\circ P_{[1,k]}-\sum_{n=1}^kT\circ I_{s_n}$ is compact. Consider the bounded operators $T\circ I_{s_n}:X_{s_n}\rightarrow X_{(\s,M,\de)}$. Since $o(\T_{s_n})<\alpha$  applying our inductive assumption we conclude that $T\circ I_{s_n}$ is compact for every $n=1,\ldots,k$. It follows that $T\circ P_{[1,k]}$ is compact and therefore $T = \lim_k T\circ P_{[1,k]}$ is compact as well.
\end{proof}

The statement of the next Lemma is proved as in \cite[Lemma 7.7]{Z} using Proposition \ref{disjoint weights compact}.

\begin{lemma}\label{distance}
Let $\T,L,\e$ be as in Definition \ref{def xtle}, $(x_k)_k$ be a
RIS in $X_{(\T,L,\e)}$ and $T:X_{(\T,L,\e)}\rightarrow
X_{(\T,L,\e)}$ be a bounded linear operator. Then we have that
$\lim_k\dist(Tx_k, \mathbb{R}x_k) = 0$.\label{dist on ris}
\end{lemma}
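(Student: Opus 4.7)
The plan is transfinite induction on the order $o(\T)$. For the base case $o(\T)=0$ the space $X_{(\T,L,\e)}$ coincides with the Argyros--Haydon space $\mathfrak{X}_{AH(L')}$ for some infinite $L'\subseteq L$, and the desired conclusion is the standard distance-to-scalars property for RIS in that space, which is part of the machinery used in the proof that every operator on $\mathfrak{X}_{AH}$ is scalar plus compact (cf.\ \cite[Section 5]{AH}). For the inductive step assume the statement for every tree of strictly smaller order than $\alpha=o(\T)$ and write $X:=X_{(\T,L,\e)}=(\sum_{n}\oplus X_{s_n})_{AH(L_0)}$ with $X_{s_n}=X_{(\T_{s_n},L_n,\delta)}$, $o(\T_{s_n})<\alpha$, and $(L_n)_{n\ge 0}$ a partition of some $L'\subseteq L$.

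I would argue by contradiction: after subsequencing assume $\dist(Tx_k,\mathbb{R}x_k)\ge\eta$ for some $\eta>0$. Pick $\lambda_k$ with $\|Tx_k-\lambda_k x_k\|\le 2\dist(Tx_k,\mathbb{R}x_k)$; since $\|Tx_k\|$ is bounded, so is $(\lambda_k)$, and after a further extraction $\lambda_k\to\lambda$. Set $y_k=Tx_k-\lambda x_k$, so $\|y_k\|\ge\eta/2$ eventually. The proof will conclude once we show such $(y_k)$ cannot exist.

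The first preparatory step is to show $P_n y_k\to 0$ for every fixed $n\in\N$. Since $(x_k)$ is horizontally block one has $P_n x_k=0$ eventually, so $P_n y_k$ agrees with $P_n T x_k$ for large $k$. Decomposing $P_n=I_{s_n}+(P_n-I_n)$ with $P_n-I_n$ of finite rank (Remark \ref{difference compact}), and noting that the Schauder decomposition $(Z_n)$ is shrinking by Proposition \ref{dualZAH}(i) so that the horizontally block sequence $(x_k)$ is weakly null, the finite-rank term $(P_n-I_n)Tx_k$ vanishes in the limit. For the remaining term, the operator $I_{s_n}\circ T\colon X\to X_{s_n}$ maps $(\sum_{m}\oplus X_{s_m})_{AH(L_0)}$ into $X_{(\T_{s_n},L_n,\delta)}$, and since $L_0\cap L_n=\emptyset$, Lemma \ref{for dist} makes it horizontally compact, so $I_{s_n}Tx_k\to 0$. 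A standard sliding-hump argument then furnishes a subsequence $(y_{k_l})$ that is a $2^{-l}$-perturbation of a horizontally block sequence $(\tilde y_l)$ in $X$ with $\|\tilde y_l\|\ge\eta/4$.

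The closing numerical contradiction is obtained in the spirit of \cite[Lemma 7.7]{Z}, and this is where I expect the main obstacle to lie. Applied to the RIS $(x_k)$, Proposition \ref{basiceq} yields the sharp upper estimate $\|\sum_{l=1}^{n_j}x_{k_l}\|\le 10Cn_j/m_j^2$ for every $j\notin L_0$, whereas a direct application of Proposition \ref{upper AH sum} to $(\tilde y_l)$ inside $X$ only produces a lower estimate for $j\in\tilde L_0\subseteq L_0$, the wrong weight class. Bridging this gap is the technical heart of the argument: Proposition \ref{disjoint weights compact} is invoked to show that each cross operator $I_{s_m}TI_{s_n}\colon X_{s_n}\to X_{s_m}$ is compact for $n\ne m$ (because $L_n\cap L_m=\emptyset$), while the inductive hypothesis applies to every diagonal operator $I_{s_n}TI_{s_n}\colon X_{s_n}\to X_{s_n}$ since $o(\T_{s_n})<\alpha$. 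Carefully combining these two ingredients along the $(\tilde y_l)$, in the dependent-sequence style inherited from \cite{AH} and \cite{Z}, one extracts a further block subsequence realising a lower estimate $\|\sum_{l=1}^{n_j}\tilde y_l\|\gtrsim n_j/m_j$ with index $j\in L_m$ for some $m\ge 1$, hence $j\notin L_0$. Feeding this into the inequality $\|\sum y_{k_l}\|\le\|T-\lambda I\|\|\sum x_{k_l}\|$ together with the $n_j/m_j^2$ upper bound forces $m_j$ bounded, contradicting the possibility of taking $j$ arbitrarily large in $L_m$. The delicate bookkeeping required to migrate the effective outer weight of $(\tilde y_l)$ from $L_0$ to some $L_m$ with $m\ge 1$, without losing the lower bound, is precisely the point where Proposition \ref{disjoint weights compact} replaces the non-hierarchical argument of \cite{Z}.
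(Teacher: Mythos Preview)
Your steps 1--5 are sound and match what the paper (implicitly, via the reference to \cite[Lemma~7.7]{Z}) intends: once $I_{s_n}\circ T$ is horizontally compact (your step~4, via Lemma~\ref{for dist}), a sliding hump puts $y_k=Tx_k-\lambda x_k$ arbitrarily close to a seminormalized horizontally block sequence $(\tilde y_l)$, and the problem becomes purely ``horizontal'' inside the $AH(L_0)$ structure. This is exactly where the paper says to repeat the argument of \cite[Lemma~7.7]{Z}; the only new ingredient, already consumed in your step~4, is Proposition~\ref{disjoint weights compact} (equivalently Lemma~\ref{for dist}).

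The gap is your step~6. There is no mechanism to ``migrate the effective outer weight of $(\tilde y_l)$ from $L_0$ to some $L_m$ with $m\ge 1$'': the sequence $(\tilde y_l)$ is horizontally block in $X=(\sum X_{s_n})_{AH(L_0)}$, and any lower estimate coming from Proposition~\ref{upper AH sum} necessarily uses a weight $m_j$ with $j\in L_0$. The inductive hypothesis on the diagonal operators $I_{s_n}TI_{s_n}$ and the compactness of the cross operators $I_{s_m}TI_{s_n}$ are statements about what happens \emph{inside} the coordinates $X_{s_n}$; they say nothing about the horizontally block sequence $(\tilde y_l)$, whose $X_{s_n}$-components are all zero. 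So neither the induction nor Proposition~\ref{disjoint weights compact} has any role left to play at this stage---both are already spent at step~4.

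The actual closing argument, as in \cite[Section~7]{AH} and \cite[Lemma~7.7]{Z}, works entirely within the $AH(L_0)$ horizontal structure and does not try to compare $n_j/m_j$ against $n_j/m_j^2$ across disjoint weight classes. It uses the conditional (odd-weight, dependent-sequence) machinery of the Argyros--Haydon construction: from the RIS $(x_{k_l})$ and the block $(\tilde y_l)$ one builds exact pairs and a special sequence whose norm is forced down to order $n_j/m_j^2$ by the coding, while a functional assembled from norming pieces of $(\tilde y_l)$ evaluates to order $n_j/m_j$ on it---both with $j\in L_0$. The extra factor of $m_j$ comes from the conditional structure, not from weight disjointness. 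In particular, the transfinite induction on $o(\T)$ is unnecessary for this lemma; the paper's proof is direct.
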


The next proposition shares similar arguments as in \cite[Theorem 7.4]{AH} and \cite[Proposition 7.8]{Z}.

\begin{proposition}
Let $\T,L,\e$ be as in Definition \ref{def xtle} and
$T:X_{(\T,L,\e)}\rightarrow X_{(\T,L,\e)}$ be a bounded linear
operator. Then there exists a real number $\la$ such that the
operator $\la I - T$ is horizontally compact.\label{scalar plus
hc}
\end{proposition}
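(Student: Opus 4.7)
The plan is to first isolate a candidate scalar $\lambda$ by applying Lemma \ref{dist on ris} to an arbitrary horizontally block RIS, and then to rule out any residual non-horizontally-compact behavior of $T-\lambda I$ by an interleaving argument that again feeds into Lemma \ref{dist on ris}. Concretely, first I would fix a normalized horizontally block RIS $(x_k)_k$ in $X_{(\T,L,\e)}$; such sequences exist by the standard construction (via Proposition \ref{upper AH sum} and its counterpart for $X_{AH(L)}$). Lemma \ref{dist on ris} provides scalars $\lambda_k\in\R$ with $\|Tx_k-\lambda_k x_k\|\to 0$, and since $T$ is bounded and $\|x_k\|=1$, the $\lambda_k$ are bounded; passing to a subsequence, $\lambda_k\to\lambda$, and so $(T-\lambda I)x_k\to 0$.

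Next I would try to show that $S:=T-\lambda I$ is horizontally compact. Assuming it is not, Proposition \ref{block to RIS} yields a normalized horizontally block RIS $(y_k)_k$ with $\limsup_k\|Sy_k\|>0$. Applying Lemma \ref{dist on ris} to $S$ on $(y_k)$ produces scalars $\mu_k$ with $\|Sy_k-\mu_k y_k\|\to 0$; the hypothesis forces $\limsup_k|\mu_k|>0$, so after passing to a subsequence we may assume $\mu_k\to\mu$ for some $\mu\neq 0$, giving $(T-(\lambda+\mu)I)y_k\to 0$.

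To derive a contradiction I would interleave the two RIS. Passing to further subsequences and reindexing, we may arrange $\ran x_k<\ran y_k<\ran x_{k+1}$ and align the weight sequences so that $z_k:=x_k+y_k$ is itself a horizontally block RIS (with a doubled constant). Lemma \ref{dist on ris} applied to $T$ on $(z_k)$ then yields $\nu_k$ with $\|Tz_k-\nu_k z_k\|\to 0$, which rewrites as
\[
(Tx_k-\lambda x_k) + (Ty_k-(\lambda+\mu)y_k) + (\lambda-\nu_k)x_k + ((\lambda+\mu)-\nu_k)y_k\to 0.
\]
The first two summands vanish in the limit by construction, and applying the (uniformly bounded) Schauder decomposition projections onto $\ran x_k$ and $\ran y_k$ separately forces $|\lambda-\nu_k|\to 0$ and $|(\lambda+\mu)-\nu_k|\to 0$, hence $\mu=0$, a contradiction. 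The main technical obstacle is the verification that the interleaved sum $z_k=x_k+y_k$ is indeed a RIS: this requires a careful choice of subsequences so that the RIS weight parameters $(j_k)$ of the two sequences can be aligned compatibly with the coefficient bound on $\overrightarrow{e}_\gamma^*$, exactly the kind of thinning argument used for the analogous results in \cite{AH} and \cite{Z}.
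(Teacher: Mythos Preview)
Your proposal is correct and follows essentially the same route as the paper, which does not spell out a proof but simply refers to \cite[Theorem~7.4]{AH} and \cite[Proposition~7.8]{Z}; those arguments proceed exactly by combining Lemma~\ref{dist on ris} with Proposition~\ref{block to RIS} and an interleaving of two RIS to force agreement of the approximating scalars. Your sum-interleaving $z_k=x_k+y_k$ with separated ranges, followed by projecting onto $\ran x_k$ and $\ran y_k$, is a clean way to extract the contradiction, and your identification of the alignment of the RIS parameters $(j_k)$ as the only technical point is accurate and is handled by the standard inductive thinning used in \cite{AH} and \cite{Z}.
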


\begin{lemma}
Let $\T,L,\e$ be as in Definition \ref{def xtle} and let also $s$
be a node of $\T$. Then for every bounded linear operator $T:
X_{(\T,L,\e)}\rightarrow X_{(\T,L,\e)}$, we have that the operator
$T\circ I_s - I_s\circ T \circ I_s$ is a compact
one.\label{significant part of T}
\end{lemma}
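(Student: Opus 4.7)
I would argue by transfinite induction on $\alpha=o(\T)$. The base case $\alpha=0$ is immediate since $\T=\{\es_{\T}\}$ and $I_{\es_{\T}}=I$. For the inductive step, the case $s=\es_{\T}$ is trivial. Otherwise $s\in\T_{s_n}$ for some $n$, where $\{s_n\}_{n\inn}$ enumerates $\scc(\es_{\T})$, and $I_s=J_s\circ I_n$ with $J_s:X_{s_n}\to X_{s_n}$ the projection of $X_{s_n}$ onto $X_s$. Using the identity $I_s=I_n\circ I_s$ I would split
\begin{equation*}
TI_s-I_sTI_s \;=\; \bigl((I-I_n)TI_n\bigr)\circ I_s \;+\; \bigl(I_nTI_s-I_sTI_s\bigr).
\end{equation*}
The second summand equals $\bigl((I_{X_{s_n}}-J_s)\circ\tilde T\circ J_s\bigr)\circ I_n$, where $\tilde T=I_n\circ T|_{X_{s_n}}:X_{s_n}\to X_{s_n}$; since $o(\T_{s_n})<\alpha$, the primary inductive hypothesis applied to $\tilde T$ and the node $s\in\T_{s_n}$ makes this summand compact. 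The whole lemma therefore reduces to proving that $(I-I_n)TI_n$ is compact for every child $s_n$ of the root.

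By Proposition \ref{scalar plus hc} write $T=\la I+K$ with $K$ horizontally compact; the vanishing $(I-I_n)I_n=0$ gives $(I-I_n)TI_n=(I-I_n)KI_n$, and the finite rank of $P_n-I_n$ (Remark \ref{difference compact}) reduces the task to establishing that $(I-P_n)\circ K|_{X_{s_n}}:X_{s_n}\to\sum_{k\neq n}Z_k$ is compact.

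I would prove a stronger claim by a secondary transfinite induction on $o(\T_s)$: for every $s\in\T_{s_n}$, the operator $(I-P_n)\circ K|_{X_s}:X_s\to X$ is compact. In the base case $o(\T_s)=0$, the space $X_s$ has a finite-dimensional FDD, so compactness is equivalent to horizontal compactness; if the latter failed, Proposition \ref{block to RIS}(i) would yield a RIS $(y_k)$ in $X_s$ with $\|(I-P_n)Ky_k\|\geq\delta>0$, and since the Schauder decomposition $(Z_k)$ of $X$ is shrinking, a gliding-hump with a Mazur-type perturbation produces (after passing to a subsequence) a seminormalized horizontally block sequence $(w_k)$ in $\sum_{k\neq n}Z_k\subset X=\bigl(\sum\oplus X_{s_m}\bigr)_{AH(L_0)}$ with $\|(I-P_n)Ky_k-w_k\|\to 0$. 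The bounded operator $(I-P_n)\circ K|_{X_s}$ would then realize a domination of $(w_k)$ by $(y_k)$, contradicting Remark \ref{ntomata}, since $L_s\subseteq L_n$ and $L_n\cap L_0=\es$. In the inductive step $o(\T_s)>0$, the same contradiction using Proposition \ref{block to RIS}(ii) and Lemma \ref{no domination} yields horizontal compactness of $(I-P_n)\circ K|_{X_s}$ on $X_s$; each composition $(I-P_n)\circ K|_{X_s}\circ P^s_{[1,m]}$ then factors through the finite direct sum $\bigoplus_{j\leq m}Z^s_j$, where every $Z^s_j\cong(X_{s'_j}\oplus\ell_\infty(\Delta^s_j))_\infty$ contributes a finite-rank piece from the $\ell_\infty$ summand and a compact piece from $X_{s'_j}$ by the secondary inductive hypothesis (as $o(\T_{s'_j})<o(\T_s)$). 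Hence every such composition is compact, and the norm limit of compacts is compact.

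The main obstacle is that horizontal compactness of $(I-P_n)\circ K|_{X_s}$ is strictly weaker than compactness once the pieces of the Schauder decomposition are infinite-dimensional; circumventing this forces the secondary descent through $\T_{s_n}$ and a careful deployment of the non-domination results Remark \ref{ntomata} and Lemma \ref{no domination}, which is precisely where the disjointness of the weight sets along incomparable nodes is exploited.
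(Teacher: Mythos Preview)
Your global strategy is sound and runs parallel to the paper's: a primary induction on $o(\T)$ together with a secondary descent through the subtree containing $s$, ending in a no-domination contradiction. Your upfront splitting
\[
TI_s-I_sTI_s=(I-I_n)TI_n\circ I_s+(I_nTI_s-I_sTI_s)
\]
is cleaner than the paper's organization: it isolates the second summand for the primary hypothesis and reduces the $m=n$ coordinate to a triviality via $P_n(I-P_n)=0$, whereas the paper inducts directly on $\rho_{\T}(s)$ and must invoke the primary hypothesis again at that coordinate.

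There is, however, a genuine gap in the gliding-hump step. You write that ``since the Schauder decomposition $(Z_k)$ of $X$ is shrinking, a gliding-hump with a Mazur-type perturbation produces \dots\ a seminormalized horizontally block sequence $(w_k)$.'' Shrinkingness only gives that $((I-P_n)Ky_k)_k$ is weakly null; it does \emph{not} force $P_m\bigl((I-P_n)Ky_k\bigr)\to 0$ in norm for fixed $m$, because the summands $Z_m$ are infinite-dimensional. A true Mazur perturbation (passing to convex combinations to kill initial coordinates in norm) would destroy the RIS structure of the preimages $(y_k)$, and then Remark~\ref{ntomata} and Lemma~\ref{no domination} no longer apply. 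What is actually needed is that for each $m\neq n$ the operator $P_mK|_{X_s}$ (equivalently $I_{s_m}K|_{X_s}:X_s\to X_{s_m}$, up to finite rank) is compact; this follows from Proposition~\ref{disjoint weights compact} (or Remark~\ref{AHcompact} in the base case $o(\T_s)=0$), since $L_s\subset L_n$ and $L_n\cap L_{s_m}=\varnothing$. With this in hand, $P_m(I-P_n)Ky_k\to 0$ in norm for every $m$, the standard sliding-hump yields the horizontally block $(w_k)$, and your contradiction goes through. The paper makes exactly this move explicitly. Once you insert that citation, your argument is complete and is a legitimate alternative organization of the proof.
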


\begin{proof}
We prove this lemma using transfinite induction on the rank of
$\T$. If $o(\T) = 0$ then $\T = \{\es_{\T}\}$ and therefore $s =
\es_{\T}$, i.e. $I_s$ is the identity map. We conclude that
$T\circ I_s - I_s\circ T \circ I_s$ is the zero operator, which is
compact.

Assume now that $\al$ is a countable cardinal number such that the
statement holds for every $\s$, $M$, $\de$ as in Definition \ref{def xtle} with $o(\s) < \al$ and let
$\T$ be a tree with $o(\T) = \al$. Assume that $X_{(\T,L,\e)} =
\left(\sum_{n=1}^{\infty}\oplus X_{s_n} \right)_{AH(L_0)}$, where
$\{s_n:\;n\inn\}$ is an enumeration of $\scc(\es_{\T})$ and
$L_t\cap L_0 = \varnothing$ for every $t\in\T$ with
$t\neq\es_{\T}$.

We shall use transfinite induction once more, this time on the
rank $\rho_{\T}(s)$ of the node $t$. Assume that $\rho(s) = 0$,
i.e. $X_s = X_{AH(L_s)}$. Let $S: X_s\rightarrow X_{(\T,L,\e)}$
be the restriction of $T\circ I_s - I_s\circ T \circ I_s$ onto $X_s$. As $I_s$ is a projection onto $X_s$, it is
enough to show that $S$ is compact. Assume that it is not, then by
Proposition \ref{block to RIS}(1), there exists a RIS $(x_k)_k$ in $X_s$ and a
positive real number $\theta$ such that $\|Sx_k\| > \theta$ for all
$k\inn$.

Let $n_0$ denote the unique natural number such that
$s\in\T_{s_{n_0}}$ and let $P_{n}$ denote the natural projections
on the components $Z_n = i_n\left[\left(X_{s_n}\oplus\ell_\infty(\Delta_n)\right)_\infty\right]$. Recall
that from Remark \ref{difference compact} $P_{n} - I_{s_n}$ is a finite rank operator and therefore it is
compact. For $n\neq n_0$, since $L_{s_n}\cap L_{s_{n_0}} = \varnothing$ and $L_s\subset L_{s_{n_0}}$,
Proposition \ref{disjoint weights compact} yields that $P_{n}\circ
S$ is compact. Moreover, for $n = n_0$ the inductive assumption
implies that the map $P_{n}\circ S$ is compact and therefore we
have that $\lim_kP_{n}\circ Sx_k = 0$ for all $n\inn$. We conclude
that the sequence $(Sx_k)_k$ has a subsequence equivalent to a
horizontally block sequence of $X_{(\T,L,\e)}$. Since $L_0\cap L_s
= \varnothing$, Lemma \ref{no domination} yields a contradiction.

Assume now that $0<\be\leq o(\T)$ is an ordinal number such that
the statement holds for every $s\in\T$ with $\rho_{\T}(s) < \be$
and let $s$ be a node of $\T$ with $\rho_{\T}(s) = \be$. If $s$ is
the root of the tree, then by the fact that $I_{\es_{\T}}$ is the
identity map, one can easily deduce the desired result. It is
therefore sufficient to check the case in which $\rho_{\T}(s) <
\rho_{\T}(\es_{\T}) = o(\T) = \al$.

Since $s$ is a non maximal node, by Proposition \ref{tree xtle}
there exists an enumeration $\{t_n:\; n\inn\}$ of $\scc(s)$ and
$L_s^0$ an infinite subset of $L_s\setminus \cup_{t\in\scc(s)}
L_t$ such that $X_s = \left(\sum_{n=1}^\infty\oplus
X_{t_n}\right)_{AH(L_s^0)}$.

Setting $S = I_{s}\circ T\circ I_{s}$, since $o(\T_{s}) < \al$,
the inductive assumption yields that the operators $S\circ I_{t_n}
- I_{t_n}\circ S\circ I_{t_n}$ are compact. In other words, the
operators $I_s\circ T\circ I_{t_n} - I_{t_n}\circ T\circ I_{t_n}$
are compact for all $n\inn$. Moreover, since $\rho_{\T}(t_n) <
\be$ for all $n\inn$, the second inductive assumption yields that
the operators $T\circ I_{t_n} - I_{t_n}\circ T\circ I_{t_n}$ are
compact for all $n\inn$. We conclude that the operators $T\circ
I_{t_n} - I_s\circ T\circ I_{t_n}$ are compact for all $n\inn$.

For $n\inn$ we recall that $P^s_{n}$ denotes the natural projections defined
on $X_s$ onto the component
$X_{t_n}\oplus\ell_{\infty}(\Delta_n^s)$ and we denote by $P_{t_n}$
the operators ${P_{n}^s}\circ I_s$. As before, the operators
$I_{t_n} - P_{t_n}$ are compact, which yields the following:
\begin{equation}
\text{For every}\;n\inn\;\text{the operator}\; T\circ P_{t_n} -
I_s\circ T\circ P_{t_n}\;\text{is compact}.\label{significant part
of T1}
\end{equation}
Observe moreover that $I_s = SOT-\sum_{i=1}^\infty P_{t_i}$ and
hence the following holds:
\begin{equation}
T\circ I_s - I_s\circ T\circ I_s =
SOT-\sum_{i=1}^\infty\left(T\circ P_{t_i} - I_s\circ T\circ
P_{t_i}\right).\label{significant part of T2}
\end{equation}
To conclude that the operator $T\circ I_s - I_s\circ T\circ I_s$
is compact, \eqref{significant part of T1} implies that it is
enough to show that the series on the righthand side of
\eqref{significant part of T2} converges in operator norm. In
other words, it is sufficient to show that setting
$R:X_s\rightarrow X_{(\T,L,\e)}$ to be the restriction of $T\circ I_s - I_s\circ
T\circ I_s$ onto $X_s$, then $R$ is horizontally compact.

Towards a contradiction, assume that this is not the case. Lemma
\ref{not hc} yields that there exists a RIS $(x_k)_k$ in $X_s$ and
a positive real number $\theta$ with $\|Rx_k\| > \theta$ for all $k\inn$.
Arguing exactly as in the case $\rho_{\T}(s) = 0$, we conclude
that the sequence $(Rx_k)_k$ has a subsequence equivalent to a
horizontally block sequence of $X_{(\T,L,\e)}$. Since $L_0\cap L_s
= \varnothing$, once more Lemma \ref{no domination} yields a
contradiction.
\end{proof}

\begin{lemma}
Let $\T,L,\e$ be as in Definition \ref{def xtle} with $o(\T) > 0$.
Let also $s$ be a non maximal node of $\T$ and let $\{s_n:\; n\inn\}$ be the enumeration of $\scc(s)$
and $L_s^0$ be an infinite subset of $L$ provided by Proposition \ref{tree xtle} (iv). Let also $T:\X\rightarrow\X$ be a bounded linear
operator. Then there exists a unique real number $\lambda_s$ and a sequence of compact operators
$(C_n)_n$ defined on $\X$, such that if $T_s = I_s\circ T\circ I_s$, then $|\la_s|\leq \|T_s\|$ and the following is satisfied:
\begin{equation}\label{i am fifteen euros short}
\lim_n\left\|T_s -\left(\la_s I_s + \sum_{i=1}^n\left(I_{s_i}\circ (T_s - \la_s I_s) \circ I_{s_i}\right)\right) - C_n\right\| = 0.
\end{equation}
\label{for function}
\end{lemma}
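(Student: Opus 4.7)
Since $s$ is non-maximal, $o(\T_s) > 0$ and $X_s$ is isometric to the AH-sum $(\sum_n\oplus X_{s_n})_{AH(L_s^0)}$; moreover $T_s = I_s T I_s$ vanishes on $\ker I_s$ and takes values in $X_s$, so it induces a bounded operator $\tilde T_s: X_s \to X_s$. The plan is to apply Proposition \ref{scalar plus hc} to $\tilde T_s$ to extract the scalar $\la_s$ together with a horizontally compact remainder, and then to telescope this remainder into the specific form demanded by \eqref{i am fifteen euros short} by means of Remark \ref{difference compact} and Lemma \ref{significant part of T}.

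\textbf{Existence and bound for $\la_s$.} Proposition \ref{scalar plus hc}, applied to $\tilde T_s$ via the isometric identification $X_s\cong X_{(\T_s,L_s,\e_s)}$, supplies a real number $\la_s$ for which $\tilde T_s - \la_s I_{X_s}$ is horizontally compact on $X_s$; equivalently,
\begin{equation*}
\lim_n\|(T_s - \la_s I_s) - (T_s - \la_s I_s)\circ P_{[1,n]}^s\circ I_s\| = 0.
\end{equation*}
Testing on a normalized horizontally block sequence $(x_k)$ in $X_s$ (where horizontal compactness forces $\|(T_s - \la_s I_s)x_k\|\to 0$) yields $|\la_s| = \lim_k\|\la_s x_k\| = \lim_k\|T_s x_k\| \leq \|T_s\|$.

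\textbf{Telescoping.} Remark \ref{difference compact}, applied inside the AH-sum $X_s$, makes each $P_i^s - I_{s_i}|_{X_s}$ a finite-rank operator; combined with $I_{s_i}\circ I_s = I_{s_i}$ from Proposition \ref{tree xtle}(ii), this yields a compact $F_n$ on $\X$ with
\begin{equation*}
(T_s - \la_s I_s)\circ P_{[1,n]}^s\circ I_s = (T_s - \la_s I_s)\circ\sum_{i=1}^n I_{s_i} + F_n.
\end{equation*}
Applying Lemma \ref{significant part of T} to the operator $T_s - \la_s I_s$ at each node $s_i$ further produces a compact $G_n$ with
\begin{equation*}
(T_s - \la_s I_s)\circ\sum_{i=1}^n I_{s_i} = \sum_{i=1}^n I_{s_i}\circ(T_s - \la_s I_s)\circ I_{s_i} + G_n.
\end{equation*}
Setting $C_n = F_n + G_n$ and chaining with the estimate from the previous step delivers \eqref{i am fifteen euros short}.

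\textbf{Uniqueness and main obstacle.} If two scalars $\la,\la'$ both satisfy \eqref{i am fifteen euros short}, subtracting the two approximations and using $I_{s_i}\circ I_s\circ I_{s_i} = I_{s_i}$ collapses the cross terms and yields $\lim_n\|(\la - \la')(I_s - \sum_{i=1}^n I_{s_i}) + (C_n - C_n')\| = 0$. Testing against a normalized, weakly null horizontally block sequence $(x_k)$ in $X_s$ — for which $I_{s_i}x_k\to 0$ for each fixed $i$ (because $P_i^s x_k = 0$ eventually and $I_{s_i}|_{X_s} - P_i^s$ is finite rank) and $(C_n - C_n')x_k\to 0$ by compactness — forces $|\la - \la'|\leq \e$ for every $\e>0$, hence $\la = \la'$. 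The principal conceptual step is the faithful identification of $T_s|_{X_s}$ with an operator on the AH-sum $X_{(\T_s,L_s,\e_s)}$ so that Proposition \ref{scalar plus hc} may be invoked; once this identification is in place, the remaining arguments reduce to bookkeeping with the compact perturbations established earlier in the paper.
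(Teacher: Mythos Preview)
Your proof is correct and follows essentially the same route as the paper: apply Proposition \ref{scalar plus hc} to $T_s$ restricted to $X_s$ to obtain $\la_s$ with $K_s=T_s-\la_s I_s$ horizontally compact, then pass from $K_s\circ P_{[1,n]}^s$ to $\sum_{i=1}^n K_s\circ I_{s_i}$ via Remark \ref{difference compact} and from there to $\sum_{i=1}^n I_{s_i}\circ K_s\circ I_{s_i}$ via Lemma \ref{significant part of T}. The only minor differences are stylistic: for the bound $|\la_s|\le\|T_s\|$ you test directly on a normalized horizontally block sequence, whereas the paper evaluates the approximation at a single vector in $P_{(n_0,\infty)}^s(X_s)$ on which the compact perturbation is small; and for uniqueness you test the difference on such a sequence, while the paper phrases the same contradiction as ``the identity on $X_s$ would be horizontally compact.''
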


\begin{proof}

We consider the projections $P_{[1,n]}^s:X_s\rightarrow\left(\sum_{k=1}^{n}\oplus(X_{s_k}\oplus\ell_{\infty}(\Delta_k))_{\infty}\right)_\infty$ where $\cup_n\Delta_n=\Gamma(\T_s,L_s,\delta_s)$. By Proposition \ref{scalar plus hc} there exists a real number $\lambda_s$ such that $K_s=T_s - \lambda_s I_s$ is horizontally compact. We will show that $\la_s$ is the desired scalar.

To find a sequence of compact operators $(C_n)_n$ so that \eqref{i am fifteen euros short} is satisfied, it is evidently enough to that for every $\delta > 0$ there is $n_0\in\N$ so that for every $n\geq n_0$ there is a compact operator $C$ so that
\begin{equation}\label{i wonder whether it will stop raining soon enough}
\left\|T_s -\left(\la_s I_s + \sum_{i=1}^n\left(I_{s_i}\circ (T_s - \la_s I_s) \circ I_{s_i}\right)\right) - C\right\| < \delta.
\end{equation}
Fix $\delta>0$ and let $n_0 \in \mathbb{N}$ such that $\|K_s - K_s \circ P_{[1,n]}^s\|< \delta$ for all $n\geq n_0$. Observe that the operator $C' = K_s\circ P_{[1,n_0]}^s-\sum_{n=1}^{n_0}K_s\circ I_{s_n}$ is compact and by Lemma \ref{significant part of T} we have that the operator $\tilde{C} = \sum_{n=1}^{n_0}K_s\circ I_{s_n}-\sum_{n=1}^{n_0}I_{s_n}\circ K_s\circ I_{s_n}$ is compact as well. Setting $C = C' + \tilde{C}$ it is easy to check that $C$ is the desired operator.

In order to show that $\lambda_s$ is unique, let $\tilde{\lambda}_s$ be a scalar so that there exists a sequence of compact operators $(\tilde{C}_n)_n$ with
\begin{equation}\label{it is ok i will use your card}
\lim_n\left\|T_s -\left(\tilde{\lambda}_s I_s + \sum_{i=1}^n\left(I_{s_i}\circ (T_s - \tilde{\lambda}_s I_s) \circ I_{s_i}\right)\right) - \tilde{C}_n\right\| = 0.
\end{equation}
Assume that $\tilde{\lambda}_s \neq \lambda_s$ and choose a sequence of compact operators $(C_n)_n$ so that \eqref{i am fifteen euros short} is satisfied. Combining \eqref{i am fifteen euros short} and \eqref {it is ok i will use your card} we conclude:
\begin{equation*}
\lim_n\left\|I_s - \left(\sum_{i=1}^nI_{s_i} + \frac{1}{\tilde{\lambda}_s - \lambda_s}(C_n - \tilde{C}_n)\right)\right\| = 0.
\end{equation*}
This implies that the identity operator on $X_s$ is horizontally compact, which is absurd.

%We set $S_n=\sum_{i=1}^nI_{s_i}\circ (T_s - \tilde{\lambda}_s I_s) \circ I_{s_i}+C_n$ and we observe that each $S_n$ is horizontally compact. Moreover, $\lim_n S_n = T_s-\tilde{\lambda_s}I_s$ and thus $T_s-\tilde{\lambda_s}I_s$ is also horizontally compact which yields that $\tilde{\lambda}=\la$.

In order to prove that $|\la_s|\leq \|T_s\|$ fix $\delta>0$ and choose $n_0\in\N$ and a compact operator $C$ so that \eqref{i wonder whether it will stop raining soon enough} is satisfied for $n=n_0$. As $C$ is compact, we may choose $x\in P_{(n_0,\infty)}^s(X_s)$ with $\|x\|=1$ such that $\|Cx\|<\delta$. Considering the above we have that \[|\lambda_s|=\|\lambda_sx\|\leq \|T_s(x)-\la_s(x)-C(x)\|+\|T_s(x)\|+\|C(x)\|\leq 2\delta+\|T_s\|.\]
Since $\delta$ was chosen arbitrarily the proof is complete.
\end{proof}

\begin{corollary}
Let $\T,L,\e$ be as in Definition \ref{def xtle}. Then every strictly singular operator $T:\X\rightarrow\X$ is compact.\label{ss implies compact}
\end{corollary}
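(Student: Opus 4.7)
The plan is to proceed by transfinite induction on the order $o(\T)$ of the tree. For the base case $o(\T) = 0$, the space $\X$ equals $X_{AH(L')}$, which enjoys the Argyros--Haydon scalar-plus-compact property: every bounded operator has the form $\lambda I + K$ with $K$ compact. If such a $T$ is strictly singular, then so is $\lambda I = T - K$ (compact operators are strictly singular, and the class of strictly singular operators is closed under addition), forcing $\lambda = 0$ and hence $T = K$ compact.

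For the inductive step, fix $\T$ with $o(\T) > 0$, write $\X = (\sum_{n}\oplus X_{s_n})_{AH(L_0)}$ as in Definition \ref{def xtle}, and let $T:\X\to\X$ be strictly singular. By Proposition \ref{scalar plus hc} there exists $\lambda\in\R$ with $K := T - \lambda I$ horizontally compact. The first step is to show that $\lambda = 0$. Suppose not; by strict singularity there is an infinite-dimensional subspace $Y\subseteq \X$ with $\|T|_Y\|<|\lambda|/2$. Since the Schauder decomposition $(Z_n)_n$ is shrinking (Proposition \ref{dualZAH}(i)), a standard sliding-hump argument produces a normalized sequence $(y_k)_k$ in $Y$ that is an arbitrarily small perturbation of a horizontally block sequence $(\tilde{y}_k)_k$. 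Horizontal compactness of $K$ forces $\|K\tilde{y}_k\|\to 0$, yet $\|(T - \lambda I)y_k\| \geq |\lambda| - \|Ty_k\| \geq |\lambda|/2$ for all large $k$, a contradiction.

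Hence $T$ itself is horizontally compact, so $\|T - TP_{[1,n]}\|\to 0$, and it suffices to prove each $TP_{[1,n]}$ compact. By Remark \ref{difference compact} each $P_k - I_{s_k}$ is finite rank, so $TP_{[1,n]}$ differs from $\sum_{k=1}^n T I_{s_k}$ by a finite-rank operator, reducing the problem to showing that each $TI_{s_k}$ is compact. Lemma \ref{significant part of T} supplies that $TI_{s_k} - I_{s_k}TI_{s_k}$ is compact, so it remains to handle $I_{s_k}TI_{s_k}$. Setting $S_k := I_{s_k}TI_{s_k}|_{X_{s_k}} : X_{s_k}\to X_{s_k}$, for any infinite-dimensional $Z\subseteq X_{s_k}$ the failure of $T|_Z$ to be bounded below (using strict singularity of $T$ on $\X$) together with $\|I_{s_k}\|=1$ and $I_{s_k}|_{X_{s_k}} = \mathrm{Id}$ transfers to $S_k|_Z$, so $S_k$ is strictly singular on $X_{s_k}$. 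Since $X_{s_k}$ is isometric to $X_{(\T_{s_k}, L_k, \de)}$ with $o(\T_{s_k}) < o(\T)$, the inductive hypothesis makes $S_k$ compact; factoring $I_{s_k}TI_{s_k} = \iota \circ S_k \circ \pi$, where $\pi:\X\to X_{s_k}$ is the corestriction of $I_{s_k}$ and $\iota:X_{s_k}\hookrightarrow\X$ is the inclusion, makes $I_{s_k}TI_{s_k}$ compact and closes the induction.

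The principal obstacle is the $\lambda = 0$ step, whose rigour depends on extracting a horizontally block sequence out of an arbitrary infinite-dimensional subspace on which $T$ has small norm; this is a standard consequence of the shrinking Schauder decomposition, but it is the one place where the non-reflexive structure of $\X$ requires care. Everything else is a careful assembly of Proposition \ref{scalar plus hc}, Remark \ref{difference compact}, and Lemma \ref{significant part of T} on top of the inductive hypothesis.
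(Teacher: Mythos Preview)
Your overall strategy matches the paper's: transfinite induction on $o(\T)$, showing $\lambda = 0$ so that $T$ is itself horizontally compact, and then reducing to the operators $I_{s_k}TI_{s_k}$ via Remark \ref{difference compact} and Lemma \ref{significant part of T} (the paper packages these last two steps into a single appeal to Lemma \ref{for function}, with $\lambda_s = 0$). The reduction step and the inductive application are correct.

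The $\lambda = 0$ step, however, contains a genuine gap. Your sliding-hump argument assumes that an \emph{arbitrary} infinite-dimensional subspace $Y\subset\X$ contains a sequence close to a horizontally block sequence, and you invoke the shrinking property to justify this. But the summands $Z_n$ of the Schauder decomposition are infinite-dimensional (each $Z_n$ contains an isometric copy of $X_{s_n}$), so the subspace $Y$ produced by strict singularity may well lie entirely inside a single $Z_n$; in that case no sequence in $Y$ is a perturbation of a horizontally block sequence, regardless of shrinkingness. Shrinking only tells you that coordinate functionals of a weakly null sequence tend to zero, not that $\|P_{[1,n]}y_k\|\to 0$ in norm for each fixed $n$---that would require the $Z_n$ to be finite-dimensional.

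The fix is to reverse the order in which you use the two hypotheses: first pick any normalized horizontally block sequence $(x_k)$ in $\X$, and then apply strict singularity of $T$ on its closed linear span to produce a further normalized block sequence $(y_k)$ of $(x_k)$ (hence still horizontally block) with $\|Ty_k\| < |\lambda|/2$. Horizontal compactness of $K = T - \lambda I$ now gives $\|Ky_k\|\to 0$, while $\|Ky_k\| \geq |\lambda|\,\|y_k\| - \|Ty_k\| > |\lambda|/2$, a contradiction. This is essentially the paper's route: it first argues that $T$ is horizontally compact by restricting attention to horizontally block subspaces from the start, and then concludes $\lambda = 0$.
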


\begin{proof}
We shall use induction on the rank $o(\T)$ of the tree $\T$. The case where $o(\T)=0$ (i.e. $\X=X_{AH(L)}$ for some $L\subseteq\N$ infinite) follows by the Argyros-Haydon method of construction in \cite{AH}.

Suppose now that $o(\T)=\alpha$ and that the thesis is true for every $\s,M,\delta$ such that $o(\s)<\alpha$. Let $T:\X\rightarrow\X$ be a strictly singular operator and let also $\{s_n: n\inn\}$, $(L_n)_{n=0}^\infty$, $\delta>0$
 as in Definition \ref{def xtle} (2) such that $X_{(\T,L,\e)} = \left(\sum_{n=1}^{\infty}\oplus X_{s_n} \right)_{AH(L_0)}$ where $X_{s_n}=X_{(\T_{s_n},L_n,\de)}$.

Since $T$ is strictly singular it is not hard to see that $T$ is horizontally compact. Indeed, it follows that for every closed subspace generated by a bounded horizontally block sequence $(x_n)_n$ there exists a further block subspace $Y$ generated by a block sequence $(y_n)_n$ of $(x_n)_n$ such that the operator $T|_Y$ is compact and thus horizontally compact. By Proposition \ref{scalar plus hc} let $\lambda$ be a scalar such that $T-\lambda I$ is horizontally compact. It follows that $\lambda=0$.

Lemma \ref{for function} yields the following \begin{equation*}
\lim_n\left\|T -\sum_{i=1}^n\left(I_{s_i}\circ T \circ I_{s_i}\right) - C_n\right\| = 0.
\end{equation*}
Since $o(\T_{s_i})<\alpha$ the inductive assumption applied on $\T_{s_i},L_{s_i},\de$ yields that for each $i$ the strictly singular operator $I_{s_i}\circ T \circ I_{s_i}:X_{s_i}\to X_{s_i}$ is compact and by the above the result follows.
\end{proof}

Recall that a tree $\T$ becomes a Hausdorff compact topological
space, if it is equipped with the topology having the sets
$\T_t$, $t\in\T$ as a subbase. We are now finally ready to state the main result of this section, which states that every operator defined on the space $\X$ is approximated by a sequence of operators, each one of which is linear combination of the projections $I_s, s\in\T$ plus a compact operator.

\begin{theorem}\label{function}
Let $\T,L,\e$ be as in Definition \ref{def xtle} and
$T:\X\rightarrow\X$ be a bounded linear operator. Then there
exists a unique function $f:\T\rightarrow\mathbb{R}$ such
that $\|f\|_\infty \leq
\|T\|$ and it satisfies the following:
if we set $\mu_{\es_{\T}} = f(\es_{\T})$ and for every node
$s\neq\es_{\T}$ we set $\mu_s = f(s) - f(s^-)$, then there exists
an increasing sequence $(\s_n)_n$ of finite downwards closed subtrees of $\T$ with
$\T = \cup_n\s_n$ and a sequence of compact operators $(C_n)_n$
such that
\begin{equation}\label{mousakas}
\lim_n \left\|T - \sum_{s\in\s_n}\mu_sI_s - C_n\right\| = 0.
\end{equation}
Moreover, the function $f$ is continuous.
\end{theorem}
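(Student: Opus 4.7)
The plan is to argue by transfinite induction on the rank $o(\T)$, defining $f$ at every node $s$ by the rule $f(s)=\lambda_s$, where $\lambda_s$ is the unique scalar associated with the operator $T_s=I_s\circ T\circ I_s$: via Lemma \ref{for function} if $s$ is non-maximal, and via the scalar-plus-compact property of the Argyros--Haydon space $X_s$ if $s$ is maximal. In the base case $o(\T)=0$ one simply writes $T=\lambda I+K$ and takes $f(\es_\T)=\lambda$, $\s_n=\{\es_\T\}$, $C_n=K$.

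For the inductive step, apply Lemma \ref{for function} at the root: this produces the unique $\lambda_{\es_\T}$ and compact operators $(D_k)_k$ with
\[
\lim_k\Bigl\|T-\lambda_{\es_\T}I-\sum_{i=1}^k S_i-D_k\Bigr\|=0,\qquad S_i:=I_{s_i}(T-\lambda_{\es_\T}I)I_{s_i},
\]
where $\{s_i\}$ enumerates $\scc(\es_\T)$. Each $S_i$, viewed on $X_{s_i}=X_{(\T_{s_i},L_{s_i},\delta_{s_i})}$, satisfies the inductive hypothesis (since $o(\T_{s_i})<o(\T)$), yielding a continuous $g_i:\T_{s_i}\to\R$, increasing finite downwards-closed subtrees $\s_n^i\subseteq\T_{s_i}$ with $\cup_n\s_n^i=\T_{s_i}$, and compact operators $C_n^i$. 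Define $f(s)=\lambda_{\es_\T}+g_i(s)$ for $s\in\T_{s_i}$; a direct computation shows the induced $\mu$-values agree with those of $g_i$ ($\mu_{s_i}=g_i(s_i)$, and $\mu_s=g_i(s)-g_i(s^-)$ for deeper $s$). A diagonal choice such as $\s_n=\{\es_\T\}\cup\bigcup_{i=1}^n\s_n^i$ and $C_n=D_n+\sum_{i=1}^n\widetilde{C}_n^i$, with each $C_n^i$ extended by zero off $X_{s_i}$, then yields \eqref{mousakas}.

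The bound $\|f\|_\infty\leq\|T\|$ is immediate from $|\lambda_s|\leq\|T_s\|\leq\|T\|$ in Lemma \ref{for function} together with $\|I_s\|=1$. Uniqueness of $f$ reduces to uniqueness at the root, applied recursively: if $f'$ satisfies \eqref{mousakas} with its own $(\s'_n),(\mu'_s),(C'_n)$, then $(\lambda_{\es_\T}-f'(\es_\T))I$ can be expressed as a norm-limit of operators of the form (sum of $I_s$-terms with $s\neq\es_\T$) plus compacts; as each such operator is horizontally compact in the sense of Definition \ref{zontirohaly mopact}, so is $(\lambda_{\es_\T}-f'(\es_\T))I$, and this forces $f'(\es_\T)=\lambda_{\es_\T}$. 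The argument then applies recursively on each $X_{s_i}$.

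The main obstacle is continuity. Non-maximal nodes are precisely the non-isolated points of $\T$, so fix non-maximal $t$ with $\scc(t)=\{t_1,t_2,\ldots\}$; the basic open neighborhoods of $t$ have the form $\T_t\setminus\bigcup_{i\in F}\T_{t_i}$ for finite $F\subseteq\N$, so continuity at $t$ is equivalent to $\sup_{s\in\T_{t_n}}|f(s)-f(t)|\to 0$ as $n\to\infty$. For $s\in\T_{t_n}$, a short computation using $I_sI_{t_n}=I_s$ gives $T_s-\lambda_tI_s=I_sR_nI_s$ with $R_n:=I_{t_n}(T-\lambda_tI)I_{t_n}$, and by the uniqueness asserted in Lemma \ref{for function} the scalar of $T_s-\lambda_tI_s$ on $X_s$ equals $\lambda_s-\lambda_t=f(s)-f(t)$. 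Since compact perturbations do not change this scalar (compacts are horizontally compact), one in fact obtains the essential-norm bound $|f(s)-f(t)|\leq\|I_sR_nI_s\|_{\mathrm{ess}}\leq\|R_n\|_{\mathrm{ess}}$. Writing $R_n$ as the difference of two consecutive partial sums in the approximation of $T_t-\lambda_tI_t$ provided by Lemma \ref{for function} shows $\|R_n\|_{\mathrm{ess}}\to 0$, which completes the continuity argument and hence the proof.
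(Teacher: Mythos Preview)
Your proof is correct and follows essentially the same transfinite-induction scheme as the paper: define $f(s)=\lambda_s$ node by node, apply Lemma \ref{for function} at the root, invoke the inductive hypothesis on each $\T_{s_i}$, and combine via a diagonal choice of subtrees and compact operators. The only notable difference is in the continuity argument: the paper observes that, by induction, continuity on each open subtree $\T_{s_i}$ is already known (since $f|_{\T_{s_i}}=g_i+\lambda_{\es_\T}$ with $g_i$ continuous), so it suffices to check continuity at the root, which follows directly from $\|g_n\|_\infty\leq\|S_n\|\to 0$ by horizontal compactness of $T-\lambda_{\es_\T}I$. Your essential-norm argument at a general non-maximal node is valid and yields the same conclusion, but the paper's route is slightly shorter because it lets the induction carry the continuity at all non-root nodes rather than re-proving it pointwise.
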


\begin{proof}
We define the function $f$ as follows, for every non-maximal node $s$, set $f(s) = \la_s$ to be the real number provided by Lemma \ref{for function} and for every maximal node $s$, $f(s)=\la_s$ to be the unique real number such that $I_s\circ T\circ I_s - \la_s I_s$ is a compact operator. By the definition of the function $f$ and since $\|I_{s}\|=1$ (see Proposition \ref{tree xtle}) it immediately follows that $\|f\|_{\infty}\leq \|T\|$.

For the rest of the proof we use induction on $o(\T)$. If $o(\T)=0$, then as stated above $f(\es_{\T})$ is the unique real number $\lambda$ such that $K=T-\lambda I_{\es_{\T}}$ is compact and \eqref{mousakas} holds.

Assume that $o(\T)=\alpha$ and that the statement is true for every tree $\T',L',\e'$ as in Definition \ref{def xtle} with $o(\T')<\alpha$. Let also $\{t_n: n\inn\}$, $(L_n)_{n=0}^\infty$, $\delta>0$
 as in Definition \ref{def xtle} (2) such that $X_{(\T,L,\e)} = \left(\sum_{n=1}^{\infty}\oplus X_{t_n} \right)_{AH(L_0)}$ where $X_{t_n}=X_{(\T_{t_n},L_n,\de)}$.

Since $o(\T_{t_n})<\alpha$ we apply the inductive assumption to each $\T_{t_n},L_n,\de$, $K_{t_n}=I_{t_n}\circ (T-\mu_{\es_T} I)\circ I_{t_n}$ and we obtain a unique continuous function $f_n:\T_{t_n}\to\mathbb{R}$ with $\|f_n\|_{\infty}\leq\|K_{t_n}\|$, an increasing sequence $(\s^{t_n}_i)_i$ of finite downwards closed subtrees of $\T_{t_n}$ with
$\T_{t_n} = \cup_i\s^{t_n}_i$ and a sequence of compact operators $(C^{t_n}_i)_i$
such that if $\tilde{\mu}_{\es_{\T_{t_n}}}=f_n(t_n)$ and $\tilde{\mu}_s=f_n(s)-f_n(s^-)$ for every $s\in\T_{t_n}$, $s\neq\es_{\T_{t_n}}=t_n$, then the following holds:
\begin{equation*}
\lim_i \left\|T_{t_n} - \sum_{s\in\s^{t_n}_i}\tilde{\mu}_sI_s - C^{t_n}_i\right\| = 0.
\end{equation*}
Observe that by Lemma \ref{for function} and the definition of the functions $f$, $f_n$ it follows that $f(s)-\mu_{\es_T}=f_n(s)$ and therefore $\mu_s=\tilde{\mu}_s$ for every $s\in\T_{t_n}$. Since $f(\es_{\T})=\mu_{\es_{\T}}$ such that $T-\mu_{\es_{T}}I$ is horizontally compact, the uniqueness of $f_n$ implies that $f$ is unique. Moreover by Lemma \ref{for function} there exists a sequence of compact operators $(C_n^\prime)_n$ such that
\begin{equation*}
\lim_n\left\|T -\left(\mu_{\es_{\T}}I_{\es_{\T}}+\sum_{i=1}^nK_{t_i}\right) - C'_n\right\| = 0
\end{equation*}

Note that $\T=\cup_n\T_{t_n}$ and for each $n$ we set $\s_n=\cup_{i=1}^n\s^{t_i}_n$, $C_n=\sum_{i=1}^nC^{t_n}_i+C'_n$. It follows that $\T=\cup_n\s_n$, where $(\s_n)_n$ is an increasing sequence of finite downwards closed subtrees, $(C_n)_n$ is a sequence of compact operators defined on $\X$ and using a diagonalization argument we conclude that
\begin{equation*}
\lim_n \left\|T - \sum_{s\in\s_n}\mu_sI_s - C_n\right\| = 0.
\end{equation*}

It remains to show that $f$ is continuous. Observe that it is enough to show that $f$ is continuous on $\es_{\T}$ or equivalently that $(f(s_n))_n$ converges to $f(\es_{\T})$. We recall by the above that $\|f_n\|_{\infty}\leq \|I_{t_n}\circ (T-\mu_{\es_{\T}})\circ I_{t_n}\|$. Since $T-\mu_{\es_{T}}$ is horizontally compact it follows that $(f_n)_n$ converges norm to 0. Since $|f_n(s_n)|\leq\|f_n\|_{\infty}$ and $f_n(s_n)=f(s_n)-\mu_{\es_{\T}}$ the proof is complete.
\end{proof}

\section{The Calkin algebras of the spaces $\X$}\label{the calkins}
As it was proved in the previous section,  every bounded linear operator defined on $\X$ is approximated by a sequence of operators, each one of which is a linear combination of the projections $I_s, s\in\T$ plus a compact operator. For a given operator, these linear combinations define a continuous function with domain the tree $\T$, which is used to define a map $\F:\C al(\X)\rightarrow C(\T)$ which is an onto bounded algebra homomorphism.

\begin{remark}\label{operators separable}
By Remark \ref{scriptLinfty} and  \cite[Theorem 5.1]{JRZ} we conclude that the space $\X$ has a basis. Also, by Proposition \ref{l1 predual}, the dual $\X^*$ is separable and hence the space of all compact operators on $\X$ is separable as well. Theorem \ref{function} clearly yields that the space $\langle\{I_s:\;\s\in\T\}\rangle+\mathcal{K}(\X)$ is dense in $\mathcal{L}(\X)$ and hence the space of all bounded linear operators on $\X$ is separable. Therefore $\C al(\X)$, the Calkin algebra of $\X$ is separable, in particular the linear span of the set $\{[I_s]:\;s\in\T\}$ is dense in $\C al(\X)$.
\end{remark}

\begin{proposition}
Let $\T, L, \e$ be as in Definition \ref{def xtle}. We define a map $\Ft:
\mathcal{L}(\X)\rightarrow C(\T)$ such that for
every operator $T$, $\Ft(T)$ is the function provided by Theorem
\ref{function}. Then $\Ft$ is a norm-one algebra homomorphism with dense
range and $\ker\Ft = \mathcal{K}(\X)$.\label{map fi tilde}
\end{proposition}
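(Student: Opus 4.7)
My plan is to exploit the pointwise characterization hidden in Lemma \ref{for function}: for each non-maximal $s$, $\Ft(T)(s)$ is the unique scalar $\la_s$ making $I_s T I_s - \la_s I_s$ horizontally compact on $X_s$, with an analogous ``compact'' statement at maximal nodes. All four conclusions will reduce to verifying this pointwise condition together with Stone-Weierstrass for the density. Linearity of $\Ft$ and the norm bound $\|\Ft\| \leq 1$ are immediate: additivity and homogeneity follow from uniqueness of $\la_s$ together with the closure of (horizontally) compact operators under sums and scalings, the bound is built into Theorem \ref{function}, and $\Ft(I) = \mathbf{1}$ (since $I_t I I_t - I_t = 0$ at every $t$) pins down $\|\Ft\| = 1$.

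Multiplicativity is the step I expect to be the main obstacle. Fix $s \in \T$ and write, with $a = \Ft(S)(s)$ and $b = \Ft(T)(s)$, the decompositions $I_s S I_s = a I_s + K_S$ and $I_s T I_s = b I_s + K_T$ with $K_S, K_T$ horizontally compact. By Lemma \ref{significant part of T} the difference $T I_s - I_s T I_s$ is compact, so composing on the left by $I_s S$ and using $I_s^2 = I_s$ shows that $I_s (ST) I_s - (I_s S I_s)(I_s T I_s)$ is compact. Using the identities $I_s K_T = K_T$ and $K_S I_s = K_S$ (both immediate from $I_s^2 = I_s$), the product expands as
\[
(I_s S I_s)(I_s T I_s) = ab\, I_s + a K_T + b K_S + K_S K_T,
\]
and each of the last three terms is horizontally compact (the first two trivially, the third because left composition with a bounded operator preserves horizontal compactness, an immediate consequence of Definition \ref{zontirohaly mopact}). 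Hence $I_s (ST) I_s - ab\, I_s$ is horizontally compact, and uniqueness of $\la_s$ yields $\Ft(ST)(s) = ab = \Ft(S)(s)\Ft(T)(s)$; the maximal-node case runs identically with ``compact'' throughout.

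For the kernel, if $\Ft(T) = 0$ then every coefficient $\mu_s$ in \eqref{mousakas} vanishes, so $\|T - C_n\| \to 0$ and $T$ is compact; conversely a compact $T$ is trivially approximated with $f \equiv 0$, and uniqueness in Theorem \ref{function} gives $\Ft(T) = 0$. For the dense range I plan to compute $\Ft(I_s) = \mathbf{1}_{\T_s}$ for every $s \in \T$ by running through the three cases dictated by Proposition \ref{tree xtle}(ii): if $t \geq s$ then $I_t I_s I_t = I_t$, forcing $\la_t = 1$; if $t < s$, letting $t_n$ be the immediate successor of $t$ with $s \in \T_{t_n}$, the identity $I_s = I_s I_{t_n}$ together with $I_{t_n} = I_{t_n} P^t_{[1,m]}$ for all $m \geq n$ shows $I_s$ is horizontally compact on $X_t$, giving $\la_t = 0$; if $t$ is incomparable to $s$ then $I_t I_s I_t = 0$, again giving $\la_t = 0$. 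Since $\{\T_s : s\in\T\}$ is a basis of clopen sets, the family $\{\mathbf{1}_{\T_s}\}_{s\in\T}$ contains the constant $\mathbf{1} = \mathbf{1}_{\T_{\es_\T}}$, separates points, and is closed under pointwise products, so its linear span (which lies in $\im \Ft$ by multiplicativity) is dense in $C(\T)$ by Stone-Weierstrass.
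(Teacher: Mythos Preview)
Your proof is correct, but your route to multiplicativity is genuinely different from the paper's. The paper first computes $\Ft(I_s)=\mathcal{X}_{\T_s}$, checks multiplicativity only on the linear span $\langle\{I_s:s\in\T\}\rangle$ (immediate from the commutation relations in Proposition \ref{tree xtle}(ii)), establishes $\ker\Ft=\mathcal{K}(\X)$, and then extends multiplicativity to all of $\mathcal{L}(\X)$ by continuity, using Remark \ref{operators separable} to the effect that $\langle\{I_s\}\rangle+\mathcal{K}(\X)$ is dense in $\mathcal{L}(\X)$. You instead verify multiplicativity pointwise for arbitrary $S,T$ by invoking Lemma \ref{significant part of T} to show $I_s(ST)I_s-(I_sSI_s)(I_sTI_s)$ is compact and then expanding the product; this is more direct and avoids the density input, at the modest cost of one explicit appeal to Lemma \ref{significant part of T} (and the observation that compact implies horizontally compact, which holds because the Schauder decomposition is shrinking). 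Your kernel argument and the computation $\Ft(I_s)=\mathbf{1}_{\T_s}$ match the paper's; your case analysis for the latter and your Stone--Weierstrass formulation of density are more explicit than what the paper writes, but amount to the same thing.
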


\begin{proof}
The fact that $\Ft$ has norm at most one follows from Theorem \ref{function}, in particular the fact that $\left\|\Ft(T)\right\|_\infty \leq \|T\|$ for every bounded operator $T$. Also $\Ft$ maps the identity map to the constant unit function and therefore $\Ft$ has norm one.

We now show that $\Ft$ has dense range and that it is an algebra homomorphism on the space $\langle\{I_s: s\in\T\}\rangle$. First, by Proposition \ref{tree xtle} (ii) observe that for each $s\in\T$ the image $\Ft(I_s)$ coincides with the characteristic function upon the  subtree $\T_s$, denoted as $\mathcal{X}_{\T_s}$.
%To see this we distinguish into the following cases:
%If $s$ is maximal then by Lemma \ref{for function} it follows that $\F(I_s)=1$ and since $\T_s=\{s\}$ we are done. Suppose now that $s$ is non maximal. Considering again Lemma \ref{for function} we conclude that $\F(I_s)(t)=1$ for every $t\in\T_s$ while otherwise $\F(I_s)(t)=0$.
From this it also follows that the image of $\Ft$ is dense in $C(\T)$. Moreover, observe that for $S=\sum_{i=1}^n\lambda_i I_{s_i}$, $T=\sum_{i=1}^m\mu_i I_{t_i}$ we have that $\Ft(T\circ S)=\Ft(T)\cdot \Ft(S)$.

%By Theorem \ref{function} and the above we conclude that $\Ft$ is a norm one homomorphism with dense range.

We now show that $\ker\Ft = \mathcal{K}(\X)$. First we observe that if $\Ft(T)=0$ then by Lemma \ref{function} $T$ is the limit of compact operator and thus $T$ is compact. Now let $T$ be a compact operator. Observe that the zero function satisfies the conclusion of Theorem \ref{function} and by uniqueness we conclude that $T$ is in $\ker\Ft$.
%It follows that $T$ is horizontally compact and also for each $t\in\T$ the operator $I_t\circ T\circ I_t$ is compact, therefore $\Ft(\es_{\T})=0$ and $\Ft(t)=0$ for every $t\in\T$ which yields that $T$ is in $\ker\Ft$.

Since, by Remark \ref{operators separable}, the space $\langle\{I_s: s\in\T\}\rangle + \mathcal{K}(\X)$ is dense in $\mathcal{L}(\X)$, it also follows that $\Ft$ is an algebra homomorphism.
\end{proof}

\begin{remark}
Let $\T, L, \e$ be as in Definition \ref{def xtle}. Then by the above it follows that the operator,
$\F:\C al(\X)\rightarrow C(\T)$, defined by the rule $\F([T]) =
\Ft(T)$, is a 1-1 algebra homomorphism with dense range and $\|\F\| =
1$.\label{map fi}
\end{remark}

\begin{proposition}
Let $\T,L,\e$ be as in Definition \ref{def xtle}. Then $\F$ is a bijection, i.e. $\mathcal{C}al(\X)$ is isomorphic, as a Banach algebra, to $C(\T)$. More precisely, we have that $\|\F\|\left\|\F^{-1}\right\| \leq 1 + \e$.\label{finite rank onto}
\end{proposition}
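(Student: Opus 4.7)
The plan is to proceed by transfinite induction on $o(\T)$. Since Remark \ref{map fi} already provides that $\Phi$ is an injective, norm-one algebra homomorphism with dense range, the only remaining task is to establish the bound $\|[T]\| \leq (1+\e)\|\Phi([T])\|_\infty$ on a dense subset of $\mathcal{C}al(\X)$: this simultaneously forces surjectivity (the range is closed) and yields $\|\Phi^{-1}\| \leq 1+\e$. Concretely, I will show that for every $f \in C(\T)$ there exists $[T] \in \mathcal{C}al(\X)$ with $\Phi([T]) = f$ and $\|[T]\| \leq (1+\e)\|f\|_\infty$.

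When $o(\T) = 0$, the space $\X = X_{AH(L')}$ has the scalar-plus-compact property, so $\mathcal{C}al(\X) = \mathbb{R}[I]$ and $\|\lambda I\|_{\mathcal{C}al} \leq |\lambda| = \|\lambda\mathbf{1}\|_\infty$ is immediate. For the inductive step, write $\X = \left(\sum_n\oplus X_{s_n}\right)_{AH(L_0)}$ with $X_{s_n} = X_{(\T_{s_n},L_n,\de)}$, where by Definition \ref{def xtle} we have $(1+\de)^2 < 1+\e$, $L_0 \subset L'$, and $4/(m_{\min L_0}-2) < \de$. Fix $f \in C(\T)$ and an auxiliary $\eta>0$; set $\lambda = f(\es_\T)$ and $h_n = f|_{\T_{s_n}} \in C(\T_{s_n})$. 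Continuity of $f$ at the root yields $\|h_n - \lambda\|_\infty \to 0$. The inductive hypothesis provides, for each $n$, representatives $T_n \in \mathcal{L}(X_{s_n})$ with $\Phi_n([T_n]) = h_n$ and $\|T_n\| \leq (1+\de)\|h_n\|_\infty + \eta/2^n$. Define
$$T_N = \sum_{k=1}^N \widetilde{T}_k + \lambda P_{(N,\infty)} \in \mathcal{L}(\X),$$
using Remark \ref{sums of ops}. By Proposition \ref{all the money} there is a compact $K$ with
$$\|T_N - K\| \leq \left(1 + \tfrac{4}{m_{\min L_0} - 2}\right)\max\{\max_k\|T_k\|,\,|\lambda|\} < (1+\de)^2\|f\|_\infty + (1+\de)\eta,$$
and the left-hand factor is bounded by $(1+\e)\|f\|_\infty + (1+\de)\eta$.

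A direct calculation using $\Phi([I_s]) = \mathcal{X}_{\T_s}$ and the identity $[P_k] = [I_{s_k}]$ (Remark \ref{difference compact}) shows that $\Phi([T_N]) = f_N$, where $f_N$ agrees with $f$ on $\{\es_\T\}\cup\bigcup_{k\leq N}\T_{s_k}$ and equals $\lambda$ on $\bigcup_{k>N}\T_{s_k}$; continuity of $f$ at the root yields $\|f_N-f\|_\infty \to 0$. To verify that $([T_N])_N$ is Cauchy, observe that $T_M - T_N = \sum_{k=N+1}^M \widetilde{U}_k + \text{compact}$, where $U_k = T_k - \lambda I_{X_{s_k}}$ satisfies $\Phi_k([U_k]) = h_k - \lambda\mathbf{1}_{\T_{s_k}}$ and so admits a representative $U'_k$ with $\|U'_k\| \leq (1+\de)\|h_k-\lambda\|_\infty + \eta/2^k \leq (1+\de)\|f_M-f_N\|_\infty + \eta/2^k$. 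A second application of Proposition \ref{all the money} gives $\|[T_M-T_N]\| < (1+\e)\|f_M-f_N\|_\infty + (1+\de)\eta$, so $[T_N]$ converges to some $[T]\in\mathcal{C}al(\X)$ with $\Phi([T]) = f$ and $\|[T]\| \leq (1+\e)\|f\|_\infty + (1+\de)\eta$. Since $\Phi$ is injective, $[T]$ is independent of $\eta$, and sending $\eta \to 0$ finishes the induction.

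The main technical obstacle is the precise identification $\Phi([T_N]) = f_N$, which relies on treating each $\widetilde{T}_k$ as an operator supported on the branch $\T_{s_k}$ and handling the tail $\lambda P_{(N,\infty)}$ at the Calkin level via $[P_k] = [I_{s_k}]$. Once this is in place, the arithmetic $(1+\de)^2 < 1+\e$ together with the small parameter $4/(m_{\min L_0}-2) < \de$ built into Definition \ref{def xtle} is precisely tuned for Proposition \ref{all the money} to yield the required inverse bound.
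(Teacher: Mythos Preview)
Your argument is correct and follows the same inductive skeleton as the paper: transfinite induction on $o(\T)$, with Proposition~\ref{all the money} supplying the key norm estimate at each step and the arithmetic $(1+\de)^2<1+\e$ closing the induction. One small slip: in the Cauchy estimate you should keep the decay $\eta/2^k$ rather than collapsing it to $\eta$, since the bound $(1+\e)\|f_M-f_N\|_\infty+(1+\de)\eta$ as written does not tend to zero; with $\eta/2^{N+1}$ in place of $\eta$ the Cauchy conclusion is immediate.

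The tactical difference is that the paper does not attempt to hit an arbitrary $f\in C(\T)$. Instead it works on the dense subspace $\langle\{[I_s]:s\in\T\}\rangle$ of $\mathcal{C}al(\X)$ and proves $\|[T]\|\leq(1+\e)\|\F([T])\|$ only for finite linear combinations $T=\sum_{s\in\T}\la_s I_s$. Such a $T$ is supported on finitely many branches $\T_{t_1},\ldots,\T_{t_n}$, so the inductive hypothesis applies directly to each piece and a \emph{single} application of Proposition~\ref{all the money} finishes the step --- no Cauchy argument, no auxiliary $\eta$, and no need to verify that $\Phi([\tilde T_k])$ restricts correctly (everything stays in the span of the $\mathcal X_{\T_s}$). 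Your approach buys an explicit description of $\Phi^{-1}(f)$ for every $f$, at the cost of the limiting machinery; the paper's approach is shorter but relies on the density established in Remark~\ref{operators separable}.
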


\begin{proof}
As $\F$ is a norm one algebra homomorphism with dense range it is enough to show that it is bounded below and it is evidently enough to do so for
a dense subset of $\C al(\X)$. We will show that for every bounded linear operator $T$ on $\X$, that is a finite linear combination of
the $I_s$, $s\in\T$, there is a compact operator $K$ on $\X$ so that
\begin{equation}
\|T - K\| \leq
(1+\e)\|\F([T])\|\label{finite
rank onto1}
\end{equation}
which of course yields that $\left\|[T]\right\| \leq
(1+\e)\|\F([T])\|$.

Let us first first make a few simple observations:
\begin{itemize}

\item[(i)] For every $s\in\T$ we have that $\F([I_s])$ is equal to $\mathcal{X}_{\T_s}$, the characteristic function of the clopen set $\T_s$.

\item[(ii)] For every real numbers $(\la_s)_{s\in\T}$, finitely many of which are not zero, if $f = \sum_{s\in\T}\la_s\mathcal{X}_{\T_s}$  we have that
\begin{equation*}
\|f\| = \max_{s\in\T}\left|\sum_{\es_{\T}\leq t\leq s}\la_t\right|.
\end{equation*}

%\item[(iii)] For every real numbers $(\la_s)_{s\in\T}$, finitely many of which are not zero, if $f = \sum_{s\in\T}\la_s\mathcal{X}_{\T_s}$  we have that.
%\begin{equation*}
%\max_{s\in\T}|\la_s| \leq 2\|f\|
%\end{equation*}

\end{itemize}
The first observation follows trivially from the definition of the
map $\F$ while the second one is an immediate consequence of the
fact that $f(s) = \sum_{\es_{\T}\leq t\leq s}\la_t$ for every
$s\in\T$.

By observations (i) and (ii) it is enough to show that if $T = \sum_{s\in\T}\lambda_s I_s$, then there is a compact operator $K$ so that $\|T - K\| \leq (1+\e) \max_{s\in\T}\left|\sum_{\es_{\T}\leq t\leq s}\la_t\right|.$

%For the last one just note that for every node $s\neq\es_{\T}$ we have that
%\begin{equation*}
%|\la_s| = \left|\sum_{\es_{\T}\leq t\leq s}\la_t - \sum_{\es_{\T}\leq t\leq s^-}\la_t\right| \leq 2\|f\|.
%\end{equation*}

We are now ready to prove \eqref{finite rank onto1}, by induction on $o(\T)$. If $o(\T)=0$, then $\T = \{\es_{\T}\}$ and $\F$ is an isometry onto the one-dimensional Banach space $C(\T)$. Assume that $o(\T)=\alpha$ and that the statement is true for every tree $\T',L',\e'$ as in Definition \ref{def xtle} with $o(\T')<\alpha$. Let also $\{t_k: k\inn\}$, $(L_k)_{k=0}^\infty$, $\delta>0$ be as in Definition \ref{def xtle} (2) such that $X_{(\T,L,\e)} = \left(\sum_{k=1}^{\infty}\oplus X_{t_k} \right)_{AH(L_0)}$ where $X_{t_k}=X_{(\T_{t_k},L_k,\de)}$.

 Let $[T] =
\sum_{s\in\T}\la_s[I_s]$ be a finite linear combination of the
$[I_s], s\in\T$. Choose $n\in\N$ so that $\la_s = 0$ for every $s\notin \{\es_{\T}\}\cup\left(\cup_{k=1}^n\T_{t_k}\right)$. For $k=1,\ldots,n$ define the operators $T_k' = \sum_{s\in\T_{t_k}}\la_sI_s$ and $T_k = \la_{\es_{\T}}I_{t_k} + T'_k$. Observe that
\begin{equation}\label{we need this for the induction}
T_k = (\la_{\es_{\T}} + \la_{t_k})I_{t_k} + \sum_{s>t_k}\la_s I_s.
\end{equation}
and rewrite $T$ as follows:
\begin{eqnarray}
T &=& \la_{\es_{\T}}P_{[1,n]} + \sum_{k=1}^nT_k + \la_{\es_{\T}}P_{(n,+\infty)}\nonumber\\
 &=& \la_{\es_{\T}}\sum_{k=1}^n\left(P_{k} - I_{t_k}\right) + \sum_{k=1}^n\left(\la_{\es_{\T}}I_{t_k} + T_k\right) + \la_{\es_{\T}}P_{(n,+\infty)}\nonumber\\
 &=& \sum_{k=1}^nT_k + \la_{\es_{\T}}P_{(n,+\infty)} + \la_{\es_{\T}}\sum_{k=1}^n\left(P_{k} - I_{t_k}\right)\label{getting there}
\end{eqnarray}
For $k=1,\ldots,n$ we write $T_k = \tilde{S}_k$ where $S_k$ is an operator  on $X_{t_k}$ (see Remark \ref{sums of ops}). The inductive assumption and \eqref{we need this for the induction} yield that there is a compact operator $C_k$ on $X_{t_k}$ so that
\begin{equation}
\|T_k - \tilde{C}_k\| \leq (1 + \delta)\max_{s\in \T_{t_k}}\left|\la_{\es_{\T}} + \sum_{t_k\leq t\leq s}\la_t\right|
\end{equation}
Using the above and applying Proposition \ref{all the money} there is a compact operator $K'$ on $\X$ so that if
\begin{equation*}
S = \sum_{k=1}^n(T_k - \tilde{C}_k) + \la_{\es_{\T}}P_{(n,+\infty)} - K'
\end{equation*}
then
\begin{eqnarray}
\|S\| &\leq&
(1+\de)\max\left\{\max_{1\leq k\leq n}(1+\de)\max_{s\in \T_{t_k}}\left|\la_{\es_{\T}} + \sum_{t_k\leq t\leq s}\la_t\right|, |\la_{\es_{\T}}|\right\}\nonumber\\
&\leq& (1+\de)^2\max_{s\in\T}\left|\sum_{\es_{\T}\leq t\leq s}\la_t\right|.\label{upside down}
\end{eqnarray}
Finally, set
\begin{equation*}
K = \sum_{k=1}^n\tilde{C}_k + K' - \la_{\es_{\T}}\sum_{k=1}^n(P_{k} - I_{t_k})
\end{equation*}
By \eqref{getting there} we have that $T - K = S$. By Remark \ref{difference compact}, the choice of $\de$ and \eqref{upside down} we conclude that $K$ is the desired compact operator.
%By observations (i) and (iii) it is enough to show that $\|[T]\|\leq \left(\max_{s\in\T}|\la_s|\right)\left(\left(2+3\e+\e^2\right)^{o(\T)+1}-1\right)/(1+3\e+\e^2)$. For $n=0,\ldots,o(\T)$ define $\mathcal{A}_n = \{s\in\T: |s| = n\}$. Observe that $(\mathcal{A}_n)_{n=0}^{o(\T)}$ defines a partition of $\T$ into antichains. Corollary \ref{antichain c0} yields the following: \begin{eqnarray*} \|[T]\| &\leq& \sum_{n=0}^{o(\T)}\left\|\sum_{s\in\mathcal{A}_n}\la_s[I_s]\right\| \leq \left(\max_{s\in\T}|\la_s|\right)\sum_{n=0}^{o(\T)}(2+3\e+\e^2)^n\\ &=& \left(\max_{s\in\T}|\la_s|\right)\frac{\left(2+3\e+\e^2\right)^{o(\T)+1}-1}{1+3\e+\e^2} \end{eqnarray*} which completes the proof.
\end{proof}

We conclude this section with some remarks concerning the space of bounded operators on $\X$.

\begin{remark}\label{the swan remark}
The space $\mathcal{L}(\X)$ does not contain an isomorphic copy of $c_0$. Indeed, towards a contradiction assume that there is a sequence of operators $(T_k)_k$ on $\X$ equivalent to the unit vector basis of $c_0$. It follows that for every $x\in\X$ and $x^*\in\X^*$ the series $\sum_kx^*T_kx$ converges absolutely. By Remark \ref{hi saturated} $c_0$ does not embed into $\X$. A well know theorem by Bessaga and Pe\l czyn\' ski yields that for every $x\in\X$ the series $\sum_kT_kx$ converges unconditionally which implies that the operator $R:\ell_\infty(\N)\rightarrow\mathcal{L}(\X)$ with $R(a_k)_k = SOT-\sum_ka_kT_k$ is well defined and bounded. By \cite[Proposition 1.2]{R} there is an infinite subset $L$ of $\N$ so that $R$ restricted onto $\ell_\infty(L)$ is an isomorphic embedding. Remark \ref{operators separable} yields a contradiction.
\end{remark}

\begin{remark}\label{the black swan remark}
The quotient map $Q:\mathcal{L}(\X)\rightarrow\mathcal{C}al(\X)$ is strictly singular. Indeed, by Proposition \ref{finite rank onto} $\mathcal{C}al(\X)$ is isomorphic to $C(\T)$. If $o(\T) = 0$ then $\mathcal{C}al(\X)$ is one-dimensional and the result trivially holds. Otherwise, $\T$ is an infinitely countable compact metric space, i.e. $C(\T)$ is $c_0$ saturated and hence so is $\mathcal{C}al(\X)$. Remark \ref{the swan remark} yields that the quotient map $Q$ is strictly singular.
\end{remark}

\begin{remark}\label{the fuchsia swan remark}
In \cite{T} a space $\mathfrak{X}_\infty$ is presented whose Calkin algebra is $\ell_1$. It follows that the space of compact operators on $\mathfrak{X}_\infty$ is complemented in the space of bounded operators. This is also the case for $\X$ if $o(\T) = 0$, as the compact operators are of co-dimension one in the space of bounded operators. However, if $o(\T)> 0$ this is is no longer the case, i.e. $\mathcal{K}(\X)$ is not complemented in $\mathcal{L}(\X)$. Indeed, if we assume that there is a subspace $Y$ of $\mathcal{L}(\X)$ so that $\mathcal{L}(\X) = \mathcal{K}(\X)\oplus Y$, the open mapping theorem implies that $Q|_Y:Y\rightarrow \mathcal{C}al(\X)$ is an onto isomorphism. Since $o(\T)>0$ we conclude that $Y$ is necessarily infinite dimensional, which contradicts Remark \ref{the black swan remark}.
\end{remark}

\begin{remark}\label{the salmon swan remark}
By Remark \ref{the swan remark}, $\mathcal{K}(\X)$ does not contain $c_0$. If moreover $o(\T) > 0$, then by Remark \ref{the fuchsia swan remark} $\mathcal{K}(\X)$ is not complemented in $\mathcal{L}(\X)$. This is related to Question B from \cite{E} and it is, to our knowledge, the first known example of a Banach space where the space of compact operators does not contain $c_0$ and is at the same time not complemented in the space of bounded operators.
\end{remark}

\begin{remark}\label{the prussian blue swan remark}
Corollary \ref{ss implies compact} and Remark \ref{the black swan remark} imply that if $o(\T) > 1$, then for every $\de>0$ there is non-strictly singular operator defined on $\X$ that is $\de$-close to a compact one. For example, if $\{s_n:\;n\in\N\}$ are the immediate successors of the root of $\T$, then Remark \ref{the swan remark} implies that there is a finite subset $F$ of $\N$ so that $\left\|\sum_{k\in F}I_{s_k}\right\| \geq 2/\de$. Proposition \ref{all the money} yields that $T = \left\|\sum_{k\in F}I_{s_k}\right\|^{-1}\left(\sum_{k\in F}I_{s_k}\right)$ is such an operator.
\end{remark}

\section{Some consequence}\label{main stuff}

In this final section we conclude that for every countable compact metric space $K$, the algebra $C(K)$ is homomorphic to the Calkin algebra of some Banach space.

\begin{theorem}\label{i am telling you now that this is the final theorem of this paper of ours}
Let $K$ be a countable compact metric space. Then there exists a $\mathcal{L}_\infty$-space $X$, with $X^*$ isomorphic to $\ell_1$, and a norm-one algebra homomorphism $\Phi:\C al(X)\rightarrow C(K)$ that is one-to-one and onto. Even more, for every $\e>0$ the space $X$ can be chosen so that $\left\|\Phi\right\|\left\|\Phi^{-1}\right\| \leq 1+\e$.\label{finite cb index}
\end{theorem}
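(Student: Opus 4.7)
The plan is to invoke Proposition \ref{finite rank onto} (which is Theorem A of the introduction) once we have topologically identified $K$ with a suitable tree of the class of Remark \ref{remark tree type}, treating separately the one case where no such tree exists. By the Mazurkiewicz-Sierpi\'nski classification, every countable compact metric space $K$ is homeomorphic to the ordinal interval $[1,\omega^\beta\cdot n]$ (with the order topology) for unique countable $\beta\geq 0$ and $n\geq 1$, and is determined up to homeomorphism by the Cantor-Bendixson pair $(\beta,n)$.

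In the main case $\beta\geq 1$, I would construct a tree $\mathcal{T}_{\beta,n}$ in the class of Remark \ref{remark tree type} satisfying $\mathcal{T}_{\beta,n}\cong K$ by a joint transfinite recursion. First build $\mathcal{T}_{\beta,1}$ realizing $[1,\omega^\beta]$: a singleton when $\beta=0$; a root with $\aleph_0$ successors being roots of copies of $\mathcal{T}_{\gamma,1}$ when $\beta=\gamma+1$; a root whose $\aleph_0$ successors are copies of $\mathcal{T}_{\gamma_i,1}$ along a cofinal sequence $(\gamma_i)\uparrow\beta$ when $\beta$ is a limit. Then for $n\geq 2$, let $\mathcal{T}_{\beta,n}$ consist of a root with exactly $n-1$ distinguished immediate successors that are roots of copies of $\mathcal{T}_{\beta,1}$, plus $\aleph_0$ additional ``filler'' successors of strictly smaller rank (maximal when $\beta=1$, and roots of copies of $\mathcal{T}_{\gamma_i,1}$ with $\gamma_i<\beta$ chosen cofinally when $\beta\geq 2$), arranged so that the root itself attains topological Cantor-Bendixson rank $\beta$. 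A transfinite induction verifies that $\mathcal{T}_{\beta,n}$ has topological Cantor-Bendixson rank $\beta+1$ and that its $\beta$-th derivative $\mathcal{T}_{\beta,n}^{(\beta)}$ has exactly $n$ elements (the root together with the $n-1$ distinguished subtree-roots); by Mazurkiewicz-Sierpi\'nski this forces $\mathcal{T}_{\beta,n}\cong K$. Choosing $\e'>0$ with $1+\e'<1+\e$ and an infinite $L\subseteq\N$, setting $X:=X_{(\mathcal{T}_{\beta,n},L,\e')}$, and composing the map of Proposition \ref{finite rank onto} with the canonical isometric algebra isomorphism $C(\mathcal{T}_{\beta,n})\to C(K)$ induced by the homeomorphism, we obtain $\Phi$ with $\|\Phi\|\,\|\Phi^{-1}\|\leq 1+\e'<1+\e$; that $X$ is $\mathcal{L}_\infty$ and $X^*\simeq\ell_1$ follows from Remark \ref{scriptLinfty} and Proposition \ref{l1 predual}.

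The remaining case $\beta=0$ and $n\geq 2$, corresponding to $K$ being a finite discrete space of size at least two, is not realizable by any tree in the class of Remark \ref{remark tree type} (such trees are either singletons or countably infinite). I would handle it directly by taking $X:=\left(X_{AH(L_1)}\oplus\cdots\oplus X_{AH(L_n)}\right)_\infty$ for pairwise disjoint infinite subsets $L_1,\ldots,L_n$ of $\N$ with $\min L_i$ chosen large enough to meet the $\e$-bound. Each summand is an $\mathcal{L}_\infty$-space with $\ell_1$-dual, hence so is $X$; by Proposition \ref{disjoint weights compact} every off-diagonal operator $X_{AH(L_i)}\to X_{AH(L_j)}$ with $i\neq j$ is compact, so the Calkin algebra of $X$ splits as the $\ell_\infty$-sum of the $n$ one-dimensional Calkin quotients of the summands, yielding $\mathcal{C}al(X)\cong\mathbb{R}^n=C(K)$ with the desired norm control.

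The main obstacle will be the transfinite-induction verification that the root of $\mathcal{T}_{\beta,n}$ has topological Cantor-Bendixson rank exactly $\beta$; what makes this work is that neighborhoods of the root coming from the subbase $\{\mathcal{T}_t:t\in\mathcal{T}\}$ allow one to remove any \emph{finite} collection of immediate-successor branches, so that the $n-1$ distinguished branches can always be excluded to isolate the root in the $\beta$-th derivative, while the cofinite collection of filler branches of strictly smaller rank still provides enough accumulation to push the root to rank $\beta$ but no higher.
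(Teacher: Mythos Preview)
Your argument is correct, but it takes a genuinely different route from the paper's. The paper does \emph{not} attempt to realize $\omega^{\beta}\cdot n$ as a single tree in the class of Remark~\ref{remark tree type}. Instead, it only realizes $\omega^{\alpha}$ as a tree $\T$ (which is easy), and then, for arbitrary $n\geq 1$, sets
\[
X=\Bigl(\sum_{i=1}^{n}\oplus\, X_{(\T,L_i,\e)}\Bigr)_{\!\infty}
\]
for pairwise disjoint $L_1,\dots,L_n$. Proposition~\ref{disjoint weights compact} kills the off-diagonal blocks in the Calkin algebra, so $\mathcal{C}al(X)$ is isometric (as a Banach algebra) to $\bigl(\sum_{i=1}^n\oplus\,\mathcal{C}al(X_{(\T,L_i,\e)})\bigr)_\infty$, and Proposition~\ref{finite rank onto} then gives the $(1+\e)$-isomorphism with $\bigl(\sum_{i=1}^n\oplus\,C(\omega^\alpha)\bigr)_\infty\cong C(\omega^\alpha n)$. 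In particular the paper's argument is uniform over all $(\alpha,n)$, including the finite case $\alpha=0$, and avoids any combinatorics of gluing trees.

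Your approach has the pleasant feature that, for $\beta\geq 1$, the space $X$ is taken directly from the family $X_{(\T,L,\e)}$ with no external direct sum; this shows, a bit more than the paper states, that every infinite countable compact metric space is already homeomorphic to some tree in the class of Remark~\ref{remark tree type}. The cost is the extra Cantor--Bendixson bookkeeping for the glued tree $\T_{\beta,n}$, plus the need to treat finite $K$ separately anyway. One small imprecision: for successor $\beta\geq 2$, saying the filler ranks $\gamma_i<\beta$ are ``chosen cofinally'' is not quite what you need; what forces the root to have rank exactly $\beta$ is that for every $\gamma<\beta$ there are \emph{infinitely many} $i$ with $\gamma_i\geq\gamma$ (e.g.\ take all $\gamma_i=\beta-1$), which is stronger than cofinality for successor $\beta$. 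Your identification of the obstacle and the reason it works (finitely many distinguished branches can be excised from a basic neighborhood of the root, while the infinitely many fillers push the rank up to $\beta$) is exactly right.
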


\begin{proof}
A well known Theorem by Sierpinski and Mazurkiewicz implies that there exist an ordinal number $\alpha$ and a natural number $n$ so that  $K$ is homeomorphic to the ordinal  number $\omega^\alpha n$. Note that $\omega^\alpha$ is homeomorphic to a tree $\T$ (which is well founded, has a unique root and every non-maximal node of it has countable infinitely many immediate successors) and that this tree is of order $\alpha$.

Let $L_1,\ldots, L_{n}$ be pairwise disjoint infinite subsets of the natural numbers, $\e > 0$ and consider the spaces $X_{(\T,L_1,\e)},\ldots,X_{(\T,L_n,\e)}$. We define the space $X = \left(\sum_{i=1}^{n}\oplus X_{(\T,L_i,\e)}\right)_\infty$ and claim that it has the desired properties. By Proposition \ref{l1 predual} and Remark \ref{scriptLinfty} it easily follows that $X$ is a $\mathcal{L}_\infty$-space with  $X^*$ isomorphic to $\ell_1$. Also, the fact that the sets $L_1,\ldots, L_n$ are pairwise disjoint and Proposition \ref{disjoint weights compact} easily yield that $\mathcal{C}al(X)$ is isometric, as a Banach Algebra, to $\left(\sum_{i=1}^n\oplus\mathcal{C}al(X_{(\T,L_i,\e)})\right)_\infty$, which, by Proposition \ref{finite rank onto}, is $(1+\e)$-isomorphic as a Banach algebra to $\left(\sum_{i=1}^n\oplus C(\omega^\alpha)\right)_\infty$, which is of course isometric as Banach algebra to $C(\omega^\alpha n)$ and this yields the desired result.
\end{proof}

Discovering the variety of Banach algebras that can occur as Calkin algebras is a topic that we believe should be investigated further. We point out that all known examples of Calkin algebra of Banach spaces are either finite dimensional or non-reflexive.
\begin{qst}
Does there exists a Banach space whose Calkin algebra is reflexive and infinite dimensional?
\end{qst}
It is worth mentioning that the method used in this paper does not seem to be able to provide an example of a Banach space whose Calkin algebra is a $C(K)$ space for $K$ uncountable.
\begin{qst}
Does there exists a Banach space whose Calkin algebra is isomorphic, as a Banach algebra, to $C(K)$ for an uncountable compact space $K$?
\end{qst}

\subsection*{Acknowledgement} We wish to thank Professor S. A. Argyros for his special advice and for introducing the problem. The paper was mostly written during  the second author's visit at the National Technical University of Athens. He would like to thank S. A. Argyros and P. Motakis for their kind hospitality. Finally, we would like to thank the referee whose suggestions helped immensely improve this paper. In particular, in an earlier version of this paper we were able to prove Theorem \ref{b} only in the case of countable compact spaces with finite Cantor-Bendixson index. The referee's approach to calculating the norm of operators of the form $\sum_{k=1}^n\oplus T_k$ in the Calkin algebra of $(\sum\oplus X_k)_{AH}$ helped us extend this result to all countable compact metric spaces.

\end{document}